\documentclass[12pt]{amsart}
\usepackage{pictexwd,dcpic}
\usepackage{amsmath,amsfonts,amssymb,amscd,amsthm,latexsym,mathrsfs}

\setlength{\textwidth}{155mm}
\setlength{\textheight}{235mm}
\usepackage{pstricks,pst-node,pst-plot}
\usepackage{graphicx}
\voffset=-20mm

\relpenalty=10000

\usepackage{pstricks,pst-node,pst-plot}
\usepackage{pictexwd,dcpic}
\usepackage{amsmath,amscd}
\usepackage{amsmath}
\usepackage{amsfonts}
\usepackage{amsthm}

\theoremstyle{plain}
\newtheorem{thm}{Theorem}[section]
\newtheorem{cor}[thm]{Corollary}
\newtheorem{lem}[thm]{Lemma}

\theoremstyle{definition}
\newtheorem{defi}[thm]{Definition}
\newtheorem{rem}[thm]{Remark}

\begin{document}

\title[Some more axiomatisability for $S$-acts
 ]{Some more axiomatisability  for $S$-acts}
\subjclass{20 M 30, 03 C 60}
\keywords{axiomatisability,flat, $S$-acts}
\date{\today}

\author{Lubna Shaheen}
\email{lls502@york.ac.uk}
\address{Department of Mathematics\\University
  of York\\Heslington\\York YO10 5DD\\UK}

\begin{abstract} This paper discusses necessary and sufficient conditions on a monoid $S$, such that the class of $\mathcal{C} $-flat left $S$-acts is axiomatisable, where $\mathcal{C}$ is the class of all embeddings (of right ideals into $S$) of right $S$-acts. We consider the axiomatisability of some flatness classes of $S$-acts, which were previously discussed by Bulman-Fleming and Gould \cite{bulmanfleminggould} . We present here a more general procedure to axiomatise these classes.
A similar type of general results have been found  for $S$-posets by  Gould and Shaheen
\cite{gouldshaheen1}.
We have found that there are some classes of
$S$-acts which are axiomatisable by more than one method. This has not been seen before.

\end{abstract}

\maketitle
\section{Introduction}\label{sec:intro}

In this paper, we will be considering axiomatisability 
problems for classes of (left) $S$-acts. Let $L_{S}$ be the first order language relating to left $S$-acts.  Then the class $S$-Act is axiomatised by
$\Sigma_S=\{ (\forall x)(1x=x)\}
\bigcup \{ \varphi_{s,t}:s,t\in S\}$
where 
\[\varphi_{s,\,t}:= (\forall
x)(s(t(x))={st}(x)).\]

 It can be noted that there are certain  classes of $S$-acts that
are axiomatisable for all monoids $S$, e.g. the class {\em $S$-Act} of all left $S$-acts. Less trivially, we denote by $\mathcal{T}$ the class of torsion free
left $S$-acts. A left $S$-act $A$ is {\em torsion free} if $sa=sb\mbox{ implies that }a=b$ for all $s\in \mathcal{LC}$, where $\mathcal{LC}$ denotes the set of left cancellable elements of
$S$. Clearly $\mathcal{T}$ is axiomatised by
$\Sigma_S\cup \big\{ (\forall x,y)(sx=sy \rightarrow x=y): s\in\mathcal{LC}\big\}.$
However,  there are some classes of 
 $S$-acts which are axiomatisable for some monoids 
and not for others e.g. the classes $\mathcal{SF}$ and $\mathcal{P}r$ 
of strongly flat and projective
$S$-acts
are axiomatisable if $S$ is finite or a group, but 
for the monoid $C$ where $C=\{ 1= e_0,e_1,e_2,\cdots \}$ and 
$e_i e_j = e_{max{\{i,j}\}} $, that is, $C$ is an inverse $\omega$-chain,  the class $\mathcal{SF}$
 is axiomatisable but $\mathcal{P}r$ is not \cite{gould}.
 
 Introductory work on axiomatisability problems for $S$-acts was done by Gould \cite{gould}. She considered the following questions:
 for which monoids $S$ are the classes of $\mathcal{SF}$ and $\mathcal{P}r$ axiomatisable?  She described necessary and sufficient conditions on
 $S$ such that $\mathcal{SF}$
is axiomatisable and obtained partial results for $\mathcal{P}r$. The
full answer for $\mathcal{P}r$ was  provided by Stepanova 
 \cite{step}.
The kind of conditions that arise, here as for other questions, are finitary in nature.

Later  Bulman-Fleming and  Gould \cite{bulmanfleminggould} gave an alternative proof of 
Stepanova's result of axiomatisability of projective $S$-acts. They also characterised 
those monoids such that the classes $\mathcal{F}$(flat) and $\mathcal{WF}$(weakly flat) of  $S$-acts are axiomatisable. Subsequently, Gould \cite{gould:tartu} characterised those monoids $S$ such that the class $\mathcal{F}r$(free)  
$S$-acts were axiomatisable. In \cite{gould:tartu} there is a discussion  of  the relations between the conditions on a monoid $S$ that arise while axiomatising certain classes of $S$-acts such as $\mathcal{F}r,\,\mathcal{P}r,\,\mathcal{SF},\,\mathcal{F}$ or $\mathcal{WF}$. Recently Gould, Stepanova, Mikhalev and 
Palyutin \cite{gouldpalyutin} gave a comrehensive survey named ``Model Theoretic Properties of Free,
Projective and Flat S-acts'' which includes much additional model theoretic material.

The aim of this paper is to  add the theory of 
axiomatisability of classes of $S$-acts  over a monoid $S$. We put some of the techniques of
earlier articles into a general setting. In \cite{gouldshaheen1} we  use these methods to develop the theory of 
axiomatisability of $S$-posets over a pomonoid $S$.

It is known that there are three familiar methods to axiomatise classes of $S$-acts. The first of them is the simplest making use of interpolation conditions  on $S$-acts to produce finitary conditions on $S$. This method has been used by Gould for $\mathcal{SF}$ \cite{gould}; we will  refer to this as the `` elements '' method. We have used this in the context of, for example, Condition (EP),(W), and (PWP), for $S$-acts.

The second two methods both involve  ``replacement tossings'' and have been developed by Bulman-Fleming and Gould  in \cite{bulmanfleminggould} for $\mathcal{F}$ and $\mathcal{WF}$; we will refer to these as ``replacement tossings'' methods; we have used these in the perspective of, Condition (E),(P),(EP),(W) and (PWP), for $S$-acts.

First, we consider the axiomatisability of some classes of $S$-acts related to flatness, such as $\mathcal{F},\mathcal{WF}$ and $\mathcal{PWF}$(principally weakly flat), where first two are previously discussed by Bulman-Fleming and  Gould \cite{bulmanfleminggould}. In Section~\ref{sec:gaxsacts} we demonstrate a more general way to axiomatise these classes, putting the two of the  ``replacement tossings'' methods into an abstract context. These can then be specialised to prove both new and known results. For clarity we are giving  general results without proves which can be solved on similar lines as for $S$-posets case, see \cite{gouldshaheen1}. Interested reader may find proves in \cite{shaheen:2010}.

In Section ~\ref{sec:axsacts}, we investigate the axiomatisability of the classes  $\mathcal{EP},\,\mathcal{W}$ and $\mathcal{PWP}$.  We determine
when these classes of $S$-acts are axiomatisable by using both the ``elements'' method and by using ``replacement tossings''.

In Section ~\ref{sec:examples} we attempt  some examples  of axiomatisability.  We develop the connection between axiomatisability  conditions of different classes. We know that if $\mathcal{P}$ is axiomatisable then so is $\mathcal{W}$. We give an example of a monoid such that $\mathcal{W}$ is axiomatisable but $\mathcal{P}$ is not. It is known that  $\mathcal{E}$ implies $ \mathcal{EP}$, we would like to know whether $\mathcal{EP}$ is axiomatisable if $\mathcal{E}$ is axiomatisable but this  is still unknown.

We note that if Condition A implies Condition B, where A and B are conditions on left $S$-acts, then we usually expect that if the class $\mathcal{A}$ of left $S$-acts satisfying Condition A is axiomatisable, then so is the class $\mathcal{B}$ of the left $S$-acts satisfying Condition B.

\begin{lem}\label{lem:UV} Let $S$ be a monoid, and let $\mathcal{U},\mathcal{V}$ be classes of left $S$-acts such that $S \in \mathcal{U}$, and  $\mathcal{U} \subseteq \mathcal{V}$. Suppose $\mathcal{V}$ is axiomatisable if and only if every ultrapower of $S$ lies in $\mathcal{V}$. Now if $\mathcal{U}$ is axiomatisable then so is $\mathcal{V}$.
\end{lem}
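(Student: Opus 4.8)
The plan is to exploit the biconditional already packaged into the hypothesis. To conclude that $\mathcal{V}$ is axiomatisable, the hypothesis tells us it is enough to verify its stated criterion: that \emph{every} ultrapower of the left $S$-act $S$ belongs to $\mathcal{V}$. So the whole argument reduces to locating the ultrapowers of $S$ inside $\mathcal{V}$, and for this I would lean entirely on the assumed axiomatisability of $\mathcal{U}$ together with the two containments $S \in \mathcal{U}$ and $\mathcal{U} \subseteq \mathcal{V}$.

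First I would recall the standard model-theoretic fact that any axiomatisable class of $L_S$-structures is closed under the formation of ultrapowers. This is immediate from the theorem of {\L}o{\'s}: an ultrapower $S^I/\Phi$ is elementarily equivalent to $S$ (via the diagonal embedding), and the class of models of a fixed set $\Sigma$ of $L_S$-sentences is trivially closed under elementary equivalence, since such models are determined by which first-order sentences they satisfy. As $\mathcal{U}$ is axiomatisable and $S \in \mathcal{U}$, it follows at once that every ultrapower of $S$ again lies in $\mathcal{U}$.

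Next, invoking $\mathcal{U} \subseteq \mathcal{V}$, every ultrapower of $S$ lies in $\mathcal{V}$. This is precisely the right-hand side of the biconditional in the hypothesis, so the hypothesis now delivers the axiomatisability of $\mathcal{V}$, completing the argument.

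There is essentially no hard step here; the substantive content is all carried by the assumed criterion for $\mathcal{V}$, and the lemma functions as a soft transfer principle. The only point deserving a moment's care is the bookkeeping about signature: the ultrapower of $S$ must be formed with $S$ regarded as a left $S$-act in the language $L_S$, so that the theorem of {\L}o{\'s} is applied in the very signature in which $\mathcal{U}$ and $\mathcal{V}$ are (potentially) axiomatised. Once this is fixed, the chain running from $S \in \mathcal{U}$, to ``the ultrapowers of $S$ lie in $\mathcal{U}$'', to ``the ultrapowers of $S$ lie in $\mathcal{V}$'', to ``$\mathcal{V}$ is axiomatisable'' proceeds automatically.
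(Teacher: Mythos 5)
Your argument is correct and is exactly the intended one: axiomatisability of $\mathcal{U}$ gives closure under ultrapowers via {\L}o{\'s}'s Theorem, so the ultrapowers of $S$ lie in $\mathcal{U}\subseteq\mathcal{V}$, and the assumed biconditional then yields axiomatisability of $\mathcal{V}$. The paper states the lemma without proof, and your write-up supplies precisely the routine verification it takes for granted.
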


 Surprisingly,  we have managed to show without using Lemma ~\ref{lem:UV}, which has been extensively used throughout this paper,  that if $\mathcal{P}$ is axiomatisable then so is $\mathcal{EP}$.

\section{Definitions and Preliminaries}

Let $A$ be a non-empty set and let $S$  be a monoid, and suppose 
there is a function $S\times A\rightarrow A$, where
$(s,a) \mapsto sa$ with the following properties $ s(t(a)) = (st)a$  and $ 1  a = a$ for all $ s , t \in S$ and $ a \in A$
is said to be a {\em left $S$-act}.
The notion of {\em right $S$-act} is defined dually.
 The class of all left $S$-acts
are denoted by $S$-{\em Act}. Notice
that
$S$ may be regarded as both a left and a right $S$-act, with actions
given by the binary operation in $S$.

A subset $B$ of a left $S$-act $A$ is an  ($S$-){\em subact}
if $B$ is closed under the action of $S$. Any left ideal of $S$ is a
subact of $S$ and dually, any right ideal is a subact of $S$.

Let $A$ be a left $S$-act and $\rho$ a relation on $A$. Then
$\rho$ is a {\em (left) $S$-act congruence} if $\rho$ is an equivalence
relation such that for any $a,b\in A$ and $s\in S$, if $a\,\rho\, b$,
then $sa\,\rho\, sb$.

A function $\theta:A \to B$ from a left $S$-act $A$ to a left $S$-act $B$ is
called  an {\em $S$-morphism} if $ (sa)\theta = s(a)\theta  $ 
for all $ s \in S$ and $a  \in A$. A bijective $S$-morphism is called
an $S$-{\em isomorphism}; if there exists an $S$-isomorphism from $A$ to
$B$, then we say that $A$ and $B$ are {\em isomorphic} and write
$A\cong B$.
We will denote by $\mathbf{S}$-{\bf Act} the category
 with objects all left $S$-acts and morphisms the $S$-morphisms between 
them. Dually we can define category with objects all right $S$-acts and
morphisms
 as $S$-morphisms between them, and will denote it by  {\bf
Act}-$\mathbf{S}.$

The approach to concepts of flatness is rather more complicated, and involves
the notion of tensor product, which we now describe.

Let $A$ be a right $S$-act and $B$ be a left $S$-act.
The {\em tensor product} of $A$ and $B$ is  obtained by taking the
quotient of $A \times B$ by the equivalence relation
 generated by the set $\{\big((as,b),(a,sb)\big)\mid a \in A, b \in B, s \in S
\}$.
We will use $A \otimes B$  to denote the tensor product of
$S$-acts $A$ and $B$. The equivalence class of $(a,b) \in  A \times B$
will be denoted by $a \otimes b \in A \otimes B$.

\bigskip

We will need to look 
carefully at equalities of the form $a\otimes b=a'\otimes b'$.

\begin{lem} \label{lem:tossing}\cite{kilpknauer}
Let $A$ be a right $S$-act and $B$ a left $S$-act.
Then for $a,a'\in A$ and $b,b'\in B$,
$a\otimes b=a'\otimes b'$ if and only if there exist
$s_1,t_1,s_2,t_2,\hdots ,s_m,t_m\in S$, $a_2,\hdots ,a_m\in A$
and $b_1,\hdots ,b_m\in B$ such that
\[\begin{array}{rclcrcl}
  &    &                    & b& = & s_1 b_1\\
                 a s_1 & = & a_2 t_1        & t_1 b_1 & =& s_2 b_2\\   
                a_2 s_2 & = & a_3 t_2     &  t_2 b_2 & = & s_3 b_3 \\
              &  \vdots &                         && \vdots & \\                    
      a_{m-1} s_{m-1} & = & a_m t_{m-1}    &  t_{m-1}b_{m-1} & = & s_m b_m  \\
              a_m s_m &=& a' t_m          & t_m b_m & = & b' 
\end {array} \]
\end{lem}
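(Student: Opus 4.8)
The plan is to identify the relation described by the scheme with the defining equivalence relation of the tensor product, exploiting the fact that $A\otimes B$ is the quotient of $A\times B$ by the \emph{smallest} equivalence relation $\sim$ containing all generating pairs $\big((as,b),(a,sb)\big)$. First I would dispose of the ``if'' direction by a direct computation: given a scheme as in the statement, I chain the basic tensor equalities
\[
a\otimes b = a\otimes s_1b_1 = as_1\otimes b_1 = a_2t_1\otimes b_1 = a_2\otimes t_1b_1 = \cdots = a't_m\otimes b_m = a'\otimes t_mb_m = a'\otimes b',
\]
where each step uses either a row of the scheme or the defining identity $xs\otimes y = x\otimes sy$. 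This direction is routine.

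For the ``only if'' direction, which carries the content of the lemma, I would define a relation $\tau$ on $A\times B$ by declaring $(a,b)\,\tau\,(a',b')$ precisely when $(a,b)=(a',b')$ or a scheme of the stated form exists. By the ``if'' direction already established, $\tau\subseteq\sim$. The goal is the reverse inclusion, and for this it suffices to show that $\tau$ is itself an equivalence relation containing every generating pair; minimality of $\sim$ then forces $\sim\subseteq\tau$, and the resulting equality $\tau=\sim$ is exactly the asserted equivalence.

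The task thus reduces to three verifications for $\tau$. Containment of the generators is immediate: to witness $(as,b)\,\tau\,(a,sb)$ one takes the length-one scheme with $s_1=1$, $t_1=s$ and $b_1=b$. Reflexivity is built into the definition. Symmetry is obtained by reversing a scheme: reading the staircase from the bottom upward and interchanging the roles of the $s_i$ and $t_i$, with the $a_i$ and $b_i$ relabelled accordingly, produces a scheme witnessing $(a',b')\,\tau\,(a,b)$. Transitivity is obtained by concatenation: given a scheme of length $m$ from $(a,b)$ to $(a',b')$ and one of length $n$ from $(a',b')$ to $(a'',b'')$, I splice them into a scheme of length $m+n$.

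The main obstacle is precisely this transitivity step, and it is a matter of careful index bookkeeping at the seam rather than any genuine difficulty. After fixing the conventions $a_1=a$, $a_{m+1}=a'$ on the $A$-side and the analogous labelling on the $B$-side, one checks that the first scheme ends with $t_mb_m=b'$ while the second begins with $b'=s'_1b'_1$, so the right column matches at the junction; and the first ends at $a'$ on the left while the second begins at $a'$, so the left column matches as well. Hence the spliced staircase satisfies every required row with no clash, the concatenation is a legitimate scheme, and transitivity follows, completing the argument.
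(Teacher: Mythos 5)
Your argument is correct and complete: the paper states this lemma without proof, citing \cite{kilpknauer}, and your proposal reconstructs exactly the standard argument given there --- verify the ``if'' direction by chaining the elementary tensor identities, then show the tossing relation (together with equality) contains the generating pairs and is reflexive, symmetric (reverse the staircase, swapping the $s_i$ and $t_i$) and transitive (splice at the seam $t_mb_m=b'=s_1'b_1'$), so that it coincides with the generated equivalence relation by minimality. There is nothing to add; the index bookkeeping you flag at the junction does work out as you describe.
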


The sequence presented  in Lemma \ref{lem:tossing} will be called a {\em tossing}
(or scheme) $\mathcal{T}$
 of length $m$ over $A$ and $B$ connecting
$(a,b)$ to $(a',b')$. The {\em skeleton}
$\mathcal{S}=\mathcal{S}(\mathcal{T})$ of $\mathcal{T}$,
 is the sequence $\mathcal{S}=(s_1,t_1,\hdots ,s_m,t_m)\in S^{2m}.$
The set of all skeletons is denoted by $\mathbb{S}$. By considering
trivial acts it is easy to
see that $\mathbb{S}$ consists of all even length
sequences of elements of $S$.


Now we define for an $S$-act $B$ the functor $-\otimes B:$
{\bf Act-}${\mathbf S} \to \mathbf{Set}$,
where $\mathbf{Set}$ is the category of sets and maps, by 



\vspace{5mm}
\[\mbox{
\begindc{0}[50]
\obj(0,2){$A$}
\obj(2,2){$A'$}
\obj(0,0){$A \otimes B$}
\obj(2,0){$A' \otimes B$}
\mor{$A$}{$A'$}{$f$}
\mor{$A$}{$A \otimes B$}{$- \otimes B$}
\mor{$A'$}{$A' \otimes B$}{$- \otimes B$}
\mor{$A \otimes B$}{$A' \otimes B$}{$ f \otimes I_{B}$}
\enddc}\]


\noindent where $f \otimes I_{B}:A \otimes B \to A' \otimes B$, and for $a \,\otimes b \in A \otimes B$, $$(a \, \otimes b)(f \otimes I_B)= (a)f\otimes b$$ where we have $f:A \to A'$ an $S$-morphism in {\bf Act-}${\mathbf S}$. 
Now we will see that various notions of flatness can be
drawn from this functor and involve it preserving monomorphisms,
or related concepts such as pullbacks and equalisers.

\bigskip

A left $S$-act $B$ is called {\em strongly flat}
 if the functor $-\otimes B $ preserves  pullbacks and  equalizers;
by a result of Stenstr\"{o}m  \cite{stenstrom}, $B$ is strongly flat if and
only if $B$
satisfies interpolation conditions later called (P) and (E) which are defined as follows:

$(P)$: for all $b,b' \in B$ and $s,s' \in S$ if $sb = s'b'$ then there exists $b'' \in B$ and $u,u' \in S$  such that $b=u b'', b'= u'b''$ and $ su = s'u'$,

$(E)$: for all $b \in B$ and $s,s' \in S$ if $sb = s'b$ then there exists $b'' \in B$ and $u \in S$ such that $b= ub''$ and $su = s'u$.

A left $S$-act $A$ satisfies Condition (EP) if whenever $ s \, a = t \, a $ 
for some $s,t\in S$ and $a\in A$,
then there exists $ a'' \in A , \, u, \,v\,\in S$ such that $ a =u\, a''= v\,a''   $  with $ s \,u = t
\,v.$

 We will denote the class of left $S$-acts satisfy Condition $(EP)$,
$(P)$ and $(E)$ by $\mathcal{EP}, \, \mathcal{P},\,\mathcal{E}$ respectively.

 A left $S$-act $B$ is  {\em flat}
 if it preserves  embeddings of right $S$-act, which is easily seen to
be equivalent to the following: if we have $a \otimes b = a' \otimes b' $ in
$ A \otimes B$ then the equality also holds in $ (aS \cup a'S )\otimes B$
for all $ a , a' \in A $ and $ b, b' \in B$, and called  ({\em principally}){\em weakly
flat} if it preserves  embeddings of (principal)right ideals of $S$ into $S$,
or in other words if $ m \otimes b = m' \otimes b'$ in $ S_S
\otimes B$ then it is already exists in $K_S \otimes B$ where $K_S$
is any (principal)right ideal of $S_S$ for all $ m, m' \in K_S$ and $b,b' \in B$.

We will denote the classes of strongly flat, flat, weakly flat, and  prinipally weakly flat, by $\mathcal{SF},\,\mathcal{F},\,\mathcal{WF},\,\mathcal{PWF}$ respectively.

Unlike the case for strongly flat there are no simple conditions 
such as $(P)$ and $(E)$ in the flat or weakly flat or principally weakly
flat cases.

In \cite{kilpknauer}  Bulman-Fleming and McDowell has proved that a left $S$-act $A$ is weakly flat if and only if it is principally weakly flat and satisfies Condition;

$(W)$: If $sa = ta'$ for $a, a' \in A$, $ s, t \in S$ then there exists $ a'' \in A$ $u \in sS \cap tS$ such that $sa=ta'=ua''$, where we can visualize $u$ as  $u=ss'=tt'$ for some $s',t' \in S$. 

\smallskip
We will denote the class of all left $S$-acts satisfy Condition $(W)$ by $\mathcal{W}$.

\begin{rem} \cite{kilpknauer}
In ${\mathbf S}${\bf -{Act}} we have
\[ \mathcal{SF} \Rightarrow \mathcal{F}
 \Rightarrow \mathcal{WF}\Rightarrow \mathcal{PWF}\]
\end{rem}

We refer reader to \cite{shaheen:2010} for definitions of pullback  diagram
$(P,(p_1,p_2))$ of the pair $(f_1,f_2)$ and related concepts of
pullback flat $S$-act.

A left $S$-act satisfies condition $(PWP)$ if for every pullback diagram $(P,(p_1,p_2))$ of the pair $(f_1,f_2)$ where $f_i : sS \to S$, $i : 1,2$ the corresponding map $\gamma$ is surjective.

\noindent Equivalently \cite{laan}, a left $S$-act satisfies condition $(PWP)$ if

\bigskip

$\forall \,\,a,\, a' \in A ,\,\forall \,\,t \in S ,  ta = ta'  \,\Rightarrow\, \exists \, a'' \in A, \, \, u, v \in S$ such that

 $$  a = u a'' \wedge a' = v a'' \wedge tu = tv.$$  
 
\noindent The class of left $S$-acts satisfy Condition $(PWP)$ will be denoted by
$\mathcal{PWP}$.
\bigskip

\noindent We now come to the main theme of this paper.

 Suppose you want to construct a  first order language which will axiomatise
a class of a given type of algebras say $\mathcal{C}$. We can do this if  
we succeed in 
defining a suitable set of sentences $\Sigma$ in our 
first order language such that any algebra $A$ lies in $\mathcal{C}$
if and only if all of the
sentences of $\Sigma$ are true in $A$.
Investigating axiomatisability problems of this kind  
involves some basics of model theory, in particular, of ultraproducts. For left $S$-acts the first order language has no constant or
relational symbols (other than $=$) and consists of 
a unary function symbol say $\lambda_{s}$ for each $s \in S$.
 We denote the first order language relating to left $S$-acts by $L_{S}$.
We usually write $\lambda_s(x)$ as $sx$. Then the class $S$-Act is axiomatised by
\[\Sigma_S=\{ (\forall x)(\lambda_1(x)=x)\}
\bigcup \{ \varphi_{s,t}:s,t\in S\}\]
where 
\[\varphi_{s,t}:= (\forall
x)(\lambda_s(\lambda_t(x))=\lambda_{st}(x)).\]

The next result play an important role in question of axiomatisability.
 
\begin{thm}{\em(\L os's Theorem)}\label{thm:los}\cite{ck}
Let $L$ be a first order language, and let $\mathcal{C}$ be
a class of interpretations of $L$. If $\mathcal{C}$ is
axiomatisable, then $\mathcal{C}$ is closed under ultraproducts.

\end{thm}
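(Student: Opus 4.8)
The plan is to deduce the statement from the \emph{Fundamental Theorem of Ultraproducts}, which is the substantive content underlying \L os's theorem. Fix an index set $I$, an ultrafilter $\mathcal{U}$ on $I$, and a family $(A_i)_{i\in I}$ of interpretations of $L$ lying in $\mathcal{C}$. I would first construct the ultraproduct $A=\prod_{i\in I}A_i/\mathcal{U}$ explicitly: its underlying set is the quotient of the Cartesian product $\prod_{i\in I}A_i$ by the relation $\sim$ given by $(a_i)\sim(b_i)$ iff $\{i\in I: a_i=b_i\}\in\mathcal{U}$, and the function, relation and constant symbols of $L$ are interpreted coordinatewise on representatives. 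A short verification, using that $\mathcal{U}$ is closed under finite intersection and supersets, shows that $\sim$ is a congruence and that these interpretations are well defined, so that $A$ is a genuine $L$-structure.

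The core step is then to prove, by induction on the complexity of an $L$-formula $\varphi(x_1,\dots,x_n)$, that for all elements $[a^1],\dots,[a^n]$ of $A$,
\[ A\models\varphi([a^1],\dots,[a^n]) \iff \{i\in I: A_i\models\varphi(a^1_i,\dots,a^n_i)\}\in\mathcal{U}. \]
The atomic case is immediate from the definition of $\sim$ and of the interpretation of the symbols, while the conjunction case uses closure of $\mathcal{U}$ under finite intersection. The two cases where the ultrafilter property (rather than mere filter property) is essential are negation and the existential quantifier. For negation one uses that for every $X\subseteq I$ exactly one of $X$ and $I\setminus X$ lies in $\mathcal{U}$, which converts ``the truth set is not in $\mathcal{U}$'' into ``the truth set of the negation is in $\mathcal{U}$''.

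The step I expect to be the main obstacle is the existential quantifier, specifically its backward direction: if the set $J=\{i: A_i\models(\exists y)\psi(y,a^1_i,\dots)\}$ lies in $\mathcal{U}$, one must manufacture a single element $[b]$ of $A$ witnessing $(\exists y)\psi$ in $A$. This requires choosing, for each $i\in J$, an element $b_i\in A_i$ satisfying $\psi$ (an appeal to the axiom of choice), setting $b_i$ arbitrarily off $J$, and then checking via the induction hypothesis that the class $[b]$ works. Once the Fundamental Theorem is in hand, the stated result follows at once: if $\mathcal{C}$ is axiomatised by a set $\Sigma$ of $L$-sentences and each $A_i\in\mathcal{C}$, then for every $\sigma\in\Sigma$ the truth set $\{i: A_i\models\sigma\}$ equals $I\in\mathcal{U}$, whence $A\models\sigma$ by the theorem; thus $A\models\Sigma$ and $A\in\mathcal{C}$, so $\mathcal{C}$ is closed under ultraproducts.
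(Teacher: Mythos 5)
Your proposal is correct: it is the standard proof of \L os's theorem via the Fundamental Theorem of Ultraproducts, proved by induction on formula complexity, with the only delicate points (negation and the witness for the existential quantifier, requiring the ultrafilter property and choice) correctly identified. The paper itself offers no proof of this statement, citing Chang and Keisler \cite{ck} instead, and your argument is essentially the one found there, so there is nothing to compare beyond noting that you have supplied the details the paper delegates to the reference.
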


\section{General results on axiomatisability}\label{sec:gaxsacts}

Let $\mathcal{C}$ be a class of embeddings of right $S$-acts, for example, all embeddings, or all embeddings of right ideals into $S$. A left $S$-act $B$ is called $\mathcal{C}$-flat if the functor $- \otimes B$ maps embeddings in $\mathcal{C}$ to one-one maps in {\bf Set}, that is, if $\tau: A \to A'$ is in $\mathcal{C}$, then $ \tau \otimes I_{B}$ is one-one. In terms of elements this says that if $ a,a' \in A$ and $b,b' \in B$ and $a \tau \otimes b= a' \tau \otimes b'$ in $A' \otimes B$, then $ a \otimes b= a' \otimes b'$ in $A \otimes B$. We denote the class of $\mathcal{C}$-flat left $S$-act by $\mathcal{CF}$.
Note: for $S$-posets there will be two variations of the notion of $\mathcal{C}$-flat,
as we explain in \cite{gouldshaheen1}. 

We introduce Condition (Free) on $\mathcal{C}$ below.
 In Subsection~\ref{subsec:general2a}  we  find necessary and sufficient conditions for the class $\mathcal{CF} $ of $\mathcal{C}$-flat left $S$-acts to be axiomatisable if
 $\mathcal{C}$  satisfies Condition (Free). The result of Bulman-Fleming and Gould axiomatising $\mathcal{F}$
 becomes a special case. In Subsection~\ref{subsec:sactsoutFP}  we drop the assumption of Condition (Free). We have a general result  to determine for which monoids $S$ is $\mathcal{CF}$ axiomatisable. 
The result of Bulman-Fleming and Gould axiomatising $\mathcal{WF}$ then
 becomes a special case. We can also deduce the axiomatisability result for $\mathcal{PWF}$ using this method.

We first describe our two general results involving ``replacement tossings''. Some of the arguments are rather intricate. The reader wanting an easier introduction to axiomatisability problems could look at Section~\ref{sec:axsacts} first.

\subsection{Axiomatisability of $\mathcal{CF}$ with Condition (Free)}\label{subsec:general2a}

In this subsection we find necessary and sufficient conditions on $S$ such that a class
$\mathcal{CF}$ is axiomatisable, where $\mathcal{C}$ is a class of embeddings of
left $S$-acts satisfying Condition (Free). We first describe this condition.

It is convenient to introduce some notation. Let 
$$\mathcal{S}=(s_1,t_1,\cdots,s_n,t_n) \in \mathbb{S}$$
\noindent be a skeleton. Let $R_{S}$ be the first order language relating to right $S$-acts.

We define a formula $\epsilon_{\mathcal{S}} \in R_{S}$,  as follows:
$$\epsilon_{\mathcal{S}}(x,x_2,\cdots,x_n,x'):= xs_1=x_2t_1\wedge x_2 s_2=x_3 t_2\wedge \cdots \wedge x_n s_n = x't_n$$ and put $$ \delta_{\mathcal{S}}(x,x'):=(\exists x_2\cdots\exists x_n){ \epsilon_{\mathcal{S}}}(x,x_2,\cdots,x_n,x').$$

On the other hand we define the formula
$$ \theta_{\mathcal{S}}(x , x_1,\cdots,x_n,x'):=x=s_1 x_1 \wedge t_1 x_1 = s_2 t_2 \wedge \cdots \wedge t_n b_n =x' $$
\noindent of $L_{S}$ and put $$\gamma_{\mathcal{S}}(x,x'):= (\exists x_1\cdots\exists x_n)\theta_{\mathcal{S}}(x,x_1,\cdots,x_n,x').$$

\begin{defi}\label{defi:Cfree}
We say that $\mathcal{C}$ satisfies {\em Condition (Free)} if for each $\mathcal{S} \in \mathbb{S}$ there is an embedding $\tau_{\mathcal{S}}:W_{\mathcal{S}} \to W'_{\mathcal{S}}$ in $\mathcal{C}$ and $u_{\mathcal{S}},u'_{\mathcal{S}} \in W_{\mathcal{S}}$ such that $\delta_{\mathcal{S}}( u_{\mathcal{S}} \tau_{\mathcal{S}},u'_{\mathcal{S}} \tau_{\mathcal{S}})$ is true in $W'_{\mathcal{S}}$ and further, for any embedding $ \mu:A \to A'$ in $\mathcal{C}$ and any $ a, a' \in A$ such that $\delta_{\mathcal{S}}(a \mu , a' \mu)$ is true in $A'$, there is a morphism $ \nu:W'_{\mathcal{S}} \to A'$ such that $ u_{\mathcal{S}} \tau_{\mathcal{S}} \nu = a \mu ,\,u'_{\mathcal{S}} \tau_{\mathcal{S}}\nu = a' \mu$ and $W_{\mathcal{S}} \tau_{\mathcal{S}} \nu \subseteq A \mu$.
\end{defi}

\begin{rem}\label{rem:formulas} Let $A,B$ be right and left $S$-acts, respectively,
let $a,a'\in A$ and $b,b'\in B$. 

$(i)$ The pair $(a,b)$ is connected to the pair $(a',b')$
via a tossing with skeleton $\mathcal{S}$ if and only if
$\delta_{\mathcal{S}}(a,a')$ is true in $A$ and
$\gamma_{\mathcal{S}}(b,b')$ is true in $B$.

$(ii)$ If $\delta_{\mathcal{S}}(a,a')$  is true in $A$ and $\psi:A\rightarrow A'$ is
a (right) $S$-morphism, then $\delta_{\mathcal{S}}(a\psi,a'\psi)$ is true
in $A'$.

$(iii)$ If   $\gamma_{\mathcal{S}}(b,b')$ is true in $B$ and $\tau:B\rightarrow B'$ is
(left) $S$-morphism, then $\gamma_{\mathcal{S}}(b\tau,b'\tau)$ is true in
$B\tau$.
\end{rem}


\begin{lem}\label{lem:cfsacts}
Let $\mathcal{C}$ be a class of embeddings of right $S$-acts satisfying {\em Condition (Free)}. Then the following conditions are equivalent for a left $S$-act $B$:

(1) B is $\mathcal{C}$-flat;

(2) $- \otimes B$ preserves all embeddings $\nu_{\mathcal{S}}:W_{\mathcal{S}} \to W'_{\mathcal{S}}$;

(3) if $(u_{\mathcal{S}} \tau_{\mathcal{S}},b) $ and $(u'_{\mathcal{S}}\tau_{\mathcal{S}},b' )$ are connected by a tossing over $W'_{\mathcal{S}}$ and $B$ with skeleton $\mathcal{S}$, then $(u_{\mathcal{S}} ,b)$ and $(u'_{\mathcal{S}},b')$ are connected by a tossing over $W_{\mathcal{S}}$ and $B$.
\end{lem}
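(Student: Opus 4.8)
The plan is to prove the equivalence by establishing $(1) \Rightarrow (2) \Rightarrow (3) \Rightarrow (1)$, since each implication exploits a different facet of Condition (Free) and of the tensor-tossing machinery recorded in Lemma~\ref{lem:tossing} and Remark~\ref{rem:formulas}. The implication $(1) \Rightarrow (2)$ is essentially immediate: each $\tau_{\mathcal{S}}:W_{\mathcal{S}} \to W'_{\mathcal{S}}$ is by definition an embedding lying in $\mathcal{C}$, so if $B$ is $\mathcal{C}$-flat then $-\otimes B$ sends every such $\tau_{\mathcal{S}}$ to a one-one map, which is precisely what $(2)$ asserts.

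For $(2) \Rightarrow (3)$ I would unwind the definition of $\mathcal{C}$-flatness in terms of elements, as spelled out just before the statement. Suppose $(u_{\mathcal{S}}\tau_{\mathcal{S}},b)$ and $(u'_{\mathcal{S}}\tau_{\mathcal{S}},b')$ are connected by a tossing over $W'_{\mathcal{S}}$ and $B$; by Lemma~\ref{lem:tossing} this says exactly that $u_{\mathcal{S}}\tau_{\mathcal{S}} \otimes b = u'_{\mathcal{S}}\tau_{\mathcal{S}} \otimes b'$ in $W'_{\mathcal{S}} \otimes B$. Rewriting $u_{\mathcal{S}}\tau_{\mathcal{S}} \otimes b = (u_{\mathcal{S}} \otimes b)(\tau_{\mathcal{S}} \otimes I_B)$ and similarly for the primed pair, the hypothesis that $\tau_{\mathcal{S}} \otimes I_B$ is one-one forces $u_{\mathcal{S}} \otimes b = u'_{\mathcal{S}} \otimes b'$ in $W_{\mathcal{S}} \otimes B$, which by Lemma~\ref{lem:tossing} again is the assertion that $(u_{\mathcal{S}},b)$ and $(u'_{\mathcal{S}},b')$ are connected by a tossing over $W_{\mathcal{S}}$ and $B$.

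The implication $(3) \Rightarrow (1)$ is where the full strength of Condition (Free) enters, and I expect this to be the main obstacle. Here I start from an arbitrary embedding $\mu:A \to A'$ in $\mathcal{C}$ and elements $a,a' \in A$, $b,b' \in B$ with $a\mu \otimes b = a'\mu \otimes b'$ in $A' \otimes B$, and I must descend this equality to $A \otimes B$. By Lemma~\ref{lem:tossing} the given equality yields a tossing over $A'$ and $B$ connecting $(a\mu,b)$ to $(a'\mu,b')$ with some skeleton $\mathcal{S}$; its $A'$-side is exactly the statement that $\delta_{\mathcal{S}}(a\mu,a'\mu)$ holds in $A'$ (Remark~\ref{rem:formulas}(i)). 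Condition (Free) now supplies the universal object: since $\delta_{\mathcal{S}}(a\mu,a'\mu)$ is true in $A'$, there is a morphism $\nu:W'_{\mathcal{S}} \to A'$ with $u_{\mathcal{S}}\tau_{\mathcal{S}}\nu = a\mu$, $u'_{\mathcal{S}}\tau_{\mathcal{S}}\nu = a'\mu$ and $W_{\mathcal{S}}\tau_{\mathcal{S}}\nu \subseteq A\mu$. The defining property also gives that $\delta_{\mathcal{S}}(u_{\mathcal{S}}\tau_{\mathcal{S}},u'_{\mathcal{S}}\tau_{\mathcal{S}})$ holds in $W'_{\mathcal{S}}$, so by Remark~\ref{rem:formulas}(i) the pairs $(u_{\mathcal{S}}\tau_{\mathcal{S}},b)$ and $(u'_{\mathcal{S}}\tau_{\mathcal{S}},b')$ are connected over $W'_{\mathcal{S}}$ and $B$ with skeleton $\mathcal{S}$.

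Invoking hypothesis $(3)$ then produces a tossing over $W_{\mathcal{S}}$ and $B$ connecting $(u_{\mathcal{S}},b)$ to $(u'_{\mathcal{S}},b')$, equivalently $u_{\mathcal{S}} \otimes b = u'_{\mathcal{S}} \otimes b'$ in $W_{\mathcal{S}} \otimes B$. The delicate final step is to transport this equality into $A \otimes B$: applying the map $\tau_{\mathcal{S}}\nu$ and using functoriality of $-\otimes B$ gives $u_{\mathcal{S}}\tau_{\mathcal{S}}\nu \otimes b = u'_{\mathcal{S}}\tau_{\mathcal{S}}\nu \otimes b'$, that is $a\mu \otimes b = a'\mu \otimes b'$, but now computed in $(W_{\mathcal{S}}\tau_{\mathcal{S}}\nu) \otimes B$. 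Because $W_{\mathcal{S}}\tau_{\mathcal{S}}\nu \subseteq A\mu$, every generator in the resulting tossing lives in $A\mu$, so the equality already holds in $(A\mu) \otimes B \cong A \otimes B$; identifying $A\mu$ with $A$ via the embedding $\mu$ yields $a \otimes b = a' \otimes b'$ in $A \otimes B$, establishing $\mathcal{C}$-flatness. The care required is exactly in checking that the image tossing is supported inside $A\mu$ rather than merely inside $A'$, which is the clause $W_{\mathcal{S}}\tau_{\mathcal{S}}\nu \subseteq A\mu$ that Condition (Free) was engineered to guarantee.
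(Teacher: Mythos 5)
Your proof is correct and is exactly the argument the paper intends: the paper itself omits the proof, deferring to the $S$-poset analogue in Gould--Shaheen and to the thesis, and your cycle $(1)\Rightarrow(2)\Rightarrow(3)\Rightarrow(1)$ — with Condition (Free) supplying the morphism $\nu$ and the crucial containment $W_{\mathcal{S}}\tau_{\mathcal{S}}\nu\subseteq A\mu$ in the last step — is that standard route. The only point worth making explicit is that when you invoke Remark~\ref{rem:formulas}(i) to connect $(u_{\mathcal{S}}\tau_{\mathcal{S}},b)$ to $(u'_{\mathcal{S}}\tau_{\mathcal{S}},b')$ with skeleton $\mathcal{S}$, you are also using that $\gamma_{\mathcal{S}}(b,b')$ holds in $B$, which comes from the $B$-side of the original tossing over $A'$.
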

\begin{proof}

Proof is along the similar lines as given for ordered case in \cite{gouldshaheen1}, or see \cite{shaheen:2010}.
\end{proof}

We use  ``The Finitely Presented Flatness Lemma'' \cite{bulmanfleminggould} for $S$-acts to construct an example of the use of Condition (Free). Specifically, we show that the class of all right $S$-acts has
Condition (Free).

 Let $\mathcal{S}=(s_1,t_1,\cdots,s_m,t_m)$ be a skeleton and let  $F^{m+1}$ be the free right $S$-act 
 $$ xS \,\dot{\cup}\, x_2 S \,\dot{\cup} \cdots \dot{\cup} \,x_m S \, \dot{\cup} \, x' S.$$  
 Let $ \rho_{\mathcal{S}}$ be the $S$-act congruence on $F^{m+1}$ generated by the relation $R_{\mathcal{S}}$ $$\{(xs_1,x_2 t_1 ),(x_2 s_2, x_3 t_2 ), \cdots, (x_{m-1} s_{m-1},x_m t_{m-1}),(x_m s_m, x' t_m ) \}.$$
We denote the $\rho_{\mathcal{S}}$-class of $w \in F^{m+1}$ by $[w]$. 

\noindent If $B$ is a left $S$-act and $ b, b_1, \cdots, b_m, b' \in B$ are such that $$ b= s_1 b_1, \, t_1 b_1 = s_2 b_2 , \cdots, t_m b_m = b' $$
then the tossing

$$
\begin{array}{rclcrcl}
                      &    &                    & b& = & s_1 b_1\\\
                [x]s_1 & = & [x_2] t_1        & t_1 b_1 & = & s_2 b_2\\\
                                
                [x_2] s_2 & = & [x_3] t_2     &  t_2 b_2 & = & s_3 b_3 \\
                
              &  \vdots &                         && \vdots &\\\
                              
             [x_{m-1}] s_{m-1} &= & [x_m]t_{m-1}& t_{m-1} b_{m-1} & =& s_{m}b_{m} \\\  
               [ x_m ]s_m & = & [x']t_m          & t_m b_m & = & b' \\

\end{array}
$$
 over $F^{m+1} / \rho_{\mathcal{S}}$ and $B$ is called the {\em standard tossing} with skeleton 

$$\mathcal{S}
= (s_1,t_1,\cdots, s_m,t_m )$$ of length $m$ connecting $([x],b)$ to $([x'],b' ).$

\begin{lem}\label{lem:fflat}\cite{bulmanfleminggould} The following conditions are  equivalent for a left $S$-act $B$:

(i) $B$ is flat;

(ii) $-\otimes B$ maps the embeddings of $[x]S \cup [x']S$ into $F^{m+1}/ \rho_{\mathcal S}$ in the category {\bf Act-}${\mathbf S}$ to monomorphisms in the category of ${\bf Set}$, for every skeleton $\mathcal S$;

(iii)  if $\big([x],b \big)$ and $\big([x'],b' \big)$ are connected by a standard tossing over $F^{m+1}/ \rho_{\mathcal S}$ and $B$ with skeleton  $\mathcal S$, then they are connected by a tossing over $[x]S \cup [x']S$ and $B$.
\end{lem}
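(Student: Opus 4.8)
The plan is to prove this as a direct application of the abstract machinery of Condition (Free) together with Lemma~\ref{lem:cfsacts}, by exhibiting the free right $S$-act $F^{m+1}/\rho_{\mathcal{S}}$ as the witnessing embedding for each skeleton $\mathcal{S}$. The class $\mathcal{F}$ of flat left $S$-acts is precisely $\mathcal{CF}$ when $\mathcal{C}$ is the class of \emph{all} embeddings of right $S$-acts, so the three-way equivalence we seek is just the specialisation of Lemma~\ref{lem:cfsacts} once we verify that this $\mathcal{C}$ satisfies Condition (Free). Thus the strategy splits into two tasks: first, check that the construction $\tau_{\mathcal{S}}\colon [x]S\cup[x']S \hookrightarrow F^{m+1}/\rho_{\mathcal{S}}$, with $u_{\mathcal{S}}=[x]$ and $u'_{\mathcal{S}}=[x']$, is a legitimate instance of Condition (Free); second, read off the equivalences $(i)\Leftrightarrow(ii)\Leftrightarrow(iii)$ from the corresponding clauses $(1)\Leftrightarrow(2)\Leftrightarrow(3)$.

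First I would verify the two requirements of Definition~\ref{defi:Cfree}. The congruence $\rho_{\mathcal{S}}$ is generated exactly so that the relations $[x]s_1=[x_2]t_1,\ [x_2]s_2=[x_3]t_2,\ \ldots,\ [x_m]s_m=[x']t_m$ hold in $F^{m+1}/\rho_{\mathcal{S}}$; reading $[x_2],\ldots,[x_m]$ as the existential witnesses, this is precisely the assertion that $\delta_{\mathcal{S}}([x],[x'])$ is true in $F^{m+1}/\rho_{\mathcal{S}}=W'_{\mathcal{S}}$, giving the first requirement. For the universal (freeness) requirement, suppose $\mu\colon A\to A'$ is any embedding of right $S$-acts and $a,a'\in A$ satisfy $\delta_{\mathcal{S}}(a\mu,a'\mu)$ in $A'$, witnessed by elements $a_2,\ldots,a_m\in A'$ with $(a\mu)s_1=a_2t_1$, and so on. The plan is to invoke the universal property of the free right $S$-act $F^{m+1}$: the assignment $x\mapsto a\mu$, $x_i\mapsto a_i$, $x'\mapsto a'\mu$ extends uniquely to an $S$-morphism $F^{m+1}\to A'$, and because these images satisfy exactly the defining relations of $R_{\mathcal{S}}$, the morphism factors through $\rho_{\mathcal{S}}$ to yield $\nu\colon W'_{\mathcal{S}}\to A'$. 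By construction $[x]\nu=a\mu$ and $[x']\nu=a'\mu$, which are the required equalities $u_{\mathcal{S}}\tau_{\mathcal{S}}\nu=a\mu$ and $u'_{\mathcal{S}}\tau_{\mathcal{S}}\nu=a'\mu$.

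The step I expect to be the main obstacle is the final clause $W_{\mathcal{S}}\tau_{\mathcal{S}}\nu\subseteq A\mu$, since it is the one place where the specific choice of $W_{\mathcal{S}}=[x]S\cup[x']S$ (rather than all of $F^{m+1}/\rho_{\mathcal{S}}$) matters and where the ``Finitely Presented Flatness Lemma'' is genuinely needed. Here I would argue that $([x]S\cup[x']S)\nu$ is generated as a right $S$-act by $[x]\nu=a\mu$ and $[x']\nu=a'\mu$, both of which lie in $A\mu$; since $A\mu$ is a subact of $A'$ (as $\mu$ is an embedding of right $S$-acts and hence $A\mu$ is closed under the $S$-action), every element $a\mu\cdot s$ and $a'\mu\cdot s$ again lies in $A\mu$, so the whole image is contained in $A\mu$. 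This confirms Condition (Free).

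With Condition (Free) established for the class of all embeddings, the equivalence $(i)\Leftrightarrow(ii)\Leftrightarrow(iii)$ is now immediate from Lemma~\ref{lem:cfsacts}, translated through the identifications $\mathcal{C}\text{-flat}=\text{flat}$, $\nu_{\mathcal{S}}=\tau_{\mathcal{S}}$ the embedding of $[x]S\cup[x']S$ into $F^{m+1}/\rho_{\mathcal{S}}$, and $u_{\mathcal{S}}=[x]$, $u'_{\mathcal{S}}=[x']$. Clause $(2)$ of the lemma becomes clause $(ii)$ word for word, and clause $(3)$ becomes clause $(iii)$, the standard tossing being exactly the tossing over $W'_{\mathcal{S}}$ and $B$ with skeleton $\mathcal{S}$ that connects $([x],b)$ to $([x'],b')$. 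I would therefore present the proof as a short verification of Condition (Free) followed by a one-line appeal to Lemma~\ref{lem:cfsacts}, noting that this recovers the result of Bulman-Fleming and Gould as the promised special case.
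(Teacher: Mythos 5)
The paper itself offers no proof of Lemma~\ref{lem:fflat}: it is imported verbatim from \cite{bulmanfleminggould} as the ``Finitely Presented Flatness Lemma'', and is used only as motivation before the paper proves Lemma~\ref{lem:allfree} directly. Your route --- verify Condition (Free) for the class of all right $S$-act embeddings with $W_{\mathcal{S}}=[x]S\cup[x']S$, $W'_{\mathcal{S}}=F^{m+1}/\rho_{\mathcal{S}}$, $u_{\mathcal{S}}=[x]$, $u'_{\mathcal{S}}=[x']$, and then specialise Lemma~\ref{lem:cfsacts} --- is therefore genuinely different from the paper's treatment, but it is exactly the derivation the general framework is designed to support (it is how Corollary~\ref{cor:flat} is meant to be recovered from Theorem~\ref{thm:FoutP}), and your verification of Condition (Free) coincides step for step with the paper's own proof of Lemma~\ref{lem:allfree}: the factoring of $\psi:F^{m+1}\to A'$ through $\rho_{\mathcal{S}}$ via $\rho_{\mathcal{S}}\subseteq\ker\psi$, and the containment $([x]S\cup[x']S)\nu=a\mu S\cup a'\mu S\subseteq A\mu$. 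There is no circularity, since neither Lemma~\ref{lem:cfsacts} nor Lemma~\ref{lem:allfree} depends on Lemma~\ref{lem:fflat}. Note also that you have silently corrected what appears to be a transposition in the paper's statement of Lemma~\ref{lem:allfree}, where $W_{\mathcal{S}}$ and $W'_{\mathcal{S}}$ are assigned the wrong way round relative to Definition~\ref{defi:Cfree}; your orientation (the subact is the domain of the embedding) is the consistent one. What your approach buys is a self-contained derivation inside the paper's machinery; what it costs is that the whole weight now rests on Lemma~\ref{lem:cfsacts}, which this paper states without proof, so your argument is only as complete as that deferred reference.

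Two small points to tighten. First, your claim that the containment $W_{\mathcal{S}}\tau_{\mathcal{S}}\nu\subseteq A\mu$ is ``where the Finitely Presented Flatness Lemma is genuinely needed'' mischaracterises that step: as your own argument then shows, it is an immediate consequence of $A\mu$ being a subact, and the Finitely Presented Flatness Lemma plays no role in verifying Condition (Free). Second, clause $(iii)$ of the lemma is phrased in terms of the \emph{standard} tossing (whose intermediate entries are the canonical classes $[x_2],\hdots,[x_m]$), whereas clause $(3)$ of Lemma~\ref{lem:cfsacts} quantifies over arbitrary tossings with skeleton $\mathcal{S}$ over $W'_{\mathcal{S}}$; these hypotheses are not literally identical, so ``word for word'' overstates the match. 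Since a standard tossing is in particular a tossing with skeleton $\mathcal{S}$, clause $(3)$ implies clause $(iii)$ directly, and the reverse implication follows because both sit in a cycle of equivalences with flatness; a sentence acknowledging this would close the gap.
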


We therefore able to show that:

\bigskip

\begin{lem}\label{lem:allfree} The class Act-$S$ of all right $S$-acts has
Condition (Free).

\begin{proof}
Let $\mathcal{S}$ be a skeleton of length $m$, and let  $W_{\mathcal{S}}=F^{m+1} / \rho_{\mathcal{S}}$, 
$W_{\mathcal{S}}'=[x]S\cup [x']S$, and let $\tau_{\mathcal{S}}:W_{\mathcal{S}} \to W^{'}_{\mathcal{S}}$ denote inclusion. Then for $[x],[x'] \in W_{\mathcal{S}}$, put
$u_{\mathcal{S}}=[x]$ and $u'_{\mathcal{S}}=[x']$. Clearly,
$\delta_{\mathcal{S}}(u_{\mathcal{S}}\tau_{\mathcal{S}},u'_{\mathcal{S}}\tau_{\mathcal{S}})$
is true in $W'_{\mathcal{S}}$.

Suppose that $\mu:A\rightarrow A'$ is any right $S$-act
embedding with $\delta_{\mathcal{S}}(a \mu,a' \mu)$ is true in $A^{'}$

$$
\begin{array}{rclcrcl}
                      &    &                    &  a \mu  s_1 & = & a_2  t_1\\\
                      &    &    & a_2 s_2& = & a_3 t_2\\\

              & &                         && \vdots &\\\
                            
               &  &           & a_m s_m & = &a'\mu t_m . \\

\end{array}
$$
Define $\psi:F^{m+1} \rightarrow A^{'}$ by $x \psi = a \mu $,\,\,$x_i  \psi= a_i \,\,\, 2 \leq i \leq m,\,\,x' \psi = a' \mu.$

Then $\rho_{\mathcal{S}} \subseteq \ker{\psi}$ so there exists  $\nu = \overline{\psi}:F^{m+1}/ \rho_{\mathcal{S}} \rightarrow A^{'}$, given by  $[k] \overline{\psi}= k \psi$. We have
\[u_{\mathcal{S}} \tau_{\mathcal{S}} \nu = [x] \overline{\psi} = x \psi = a \mu,\,\, 
u_{\mathcal{S}}' \tau_{\mathcal{S}} \nu = [x'] \overline{\psi} = x' \psi = a' \mu\]
so that 
\[W_{\mathcal{S}}\tau_{\mathcal{S}}\nu=([x]S\cup [x']S)\overline{\psi}=a\mu S\cup a'\mu S\subseteq A\mu.\]

\vspace{2mm}

\noindent Thus we can see that Condition (Free) holds.

\end{proof}

\end{lem}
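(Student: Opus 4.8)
The plan is to verify Condition (Free) for the class of all right $S$-acts by exhibiting, for each skeleton $\mathcal{S}$, the canonical witnesses coming from the Finitely Presented Flatness Lemma (Lemma~\ref{lem:fflat}). Given a skeleton $\mathcal{S}=(s_1,t_1,\cdots,s_m,t_m)$, I would take $W_{\mathcal{S}}=F^{m+1}/\rho_{\mathcal{S}}$ and $W'_{\mathcal{S}}=[x]S\cup[x']S$, with $\tau_{\mathcal{S}}$ the inclusion map and $u_{\mathcal{S}}=[x]$, $u'_{\mathcal{S}}=[x']$. The first thing to check is that $\delta_{\mathcal{S}}([x],[x'])$ holds in $W'_{\mathcal{S}}$: this is immediate from the defining relations in $R_{\mathcal{S}}$, since the generating pairs $(xs_1,x_2t_1),\ldots,(x_ms_m,x't_m)$ become equalities $[x]s_1=[x_2]t_1,\ldots,[x_m]s_m=[x']t_m$ in the quotient, giving exactly the existential witnesses $[x_2],\ldots,[x_m]$ required by the formula $\delta_{\mathcal{S}}$.

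The substantive step is the universal (``free'') property. I would take an arbitrary embedding $\mu:A\to A'$ in Act-$S$ together with $a,a'\in A$ for which $\delta_{\mathcal{S}}(a\mu,a'\mu)$ is true in $A'$, so that there exist $a_2,\ldots,a_m\in A'$ witnessing the chain $a\mu s_1=a_2t_1,\ a_2s_2=a_3t_2,\ \ldots,\ a_ms_m=a'\mu t_m$. The key idea is to use the freeness of $F^{m+1}$: I define $\psi:F^{m+1}\to A'$ on the free generators by $x\psi=a\mu$, $x_i\psi=a_i$ for $2\le i\le m$, and $x'\psi=a'\mu$, extending $S$-linearly. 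Because $\psi$ sends each generating pair of $R_{\mathcal{S}}$ to an equal pair in $A'$ (precisely the witnessing equalities above), we have $\rho_{\mathcal{S}}\subseteq\ker\psi$, so $\psi$ factors through the quotient to give a well-defined morphism $\nu=\overline{\psi}:W_{\mathcal{S}}\to A'$ determined by $[k]\overline{\psi}=k\psi$. It then remains to read off the three required conclusions: $u_{\mathcal{S}}\tau_{\mathcal{S}}\nu=[x]\overline{\psi}=a\mu$ and $u'_{\mathcal{S}}\tau_{\mathcal{S}}\nu=[x']\overline{\psi}=a'\mu$ directly from the definition of $\psi$, and $W_{\mathcal{S}}\tau_{\mathcal{S}}\nu=([x]S\cup[x']S)\overline{\psi}=a\mu S\cup a'\mu S\subseteq A\mu$, where the final inclusion uses that $A\mu$ is a subact of $A'$ containing $a\mu$ and $a'\mu$.

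I expect the only delicate point to be the claim $\rho_{\mathcal{S}}\subseteq\ker\psi$. Since $\rho_{\mathcal{S}}$ is the congruence \emph{generated} by $R_{\mathcal{S}}$, it is not enough to check that $\psi$ identifies the generating pairs; one must note that $\ker\psi$ is itself a congruence, so the smallest congruence containing $R_{\mathcal{S}}$ is contained in it. This is a standard fact about congruences, but it is the one place where care is genuinely required, and everything else is bookkeeping. In summary, the identification $W_{\mathcal{S}}\tau_{\mathcal{S}}\nu\subseteq A\mu$ together with the two equalities on $u_{\mathcal{S}},u'_{\mathcal{S}}$ establishes Condition (Free) for Act-$S$.
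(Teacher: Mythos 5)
Your proposal is correct and follows essentially the same route as the paper's own proof: the same witnesses $W_{\mathcal{S}}$, $W'_{\mathcal{S}}$, $u_{\mathcal{S}}=[x]$, $u'_{\mathcal{S}}=[x']$ from the Finitely Presented Flatness Lemma, the same map $\psi$ on free generators, the same factorisation $\nu=\overline{\psi}$ through $\rho_{\mathcal{S}}$, and the same final inclusion $a\mu S\cup a'\mu S\subseteq A\mu$; your explicit justification that $\rho_{\mathcal{S}}\subseteq\ker\psi$ because $\ker\psi$ is a congruence containing $R_{\mathcal{S}}$ is exactly the point the paper leaves implicit. (You have also reproduced the paper's own slip of naming $F^{m+1}/\rho_{\mathcal{S}}$ as $W_{\mathcal{S}}$ and $[x]S\cup[x']S$ as $W'_{\mathcal{S}}$, whereas the inclusion $\tau_{\mathcal{S}}$ and the truth of $\delta_{\mathcal{S}}$ in $W'_{\mathcal{S}}$ require these labels to be interchanged, since the witnesses $[x_2],\ldots,[x_m]$ live in the quotient, not in $[x]S\cup[x']S$.)
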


Let $\mathcal{C}$ be a class of right $S$-acts, and let $\mathcal{\overline{C}}$ be the set of products of morphisms in $\mathcal{C}$ (with the obvious definition).

\begin{lem}\label{lem:CFree}
Let $\mathcal{C}$ be a class of embeddings of right $S$-acts, satisfying Condition {(Free)}. If a left $S$-act $B$ is $\mathcal{C}$-flat, then it is $\mathcal{\overline{C}}$-flat.
\end{lem}

We now come to our first main result. The technique used is that of \cite{bulmanfleminggould}, but we are working in a more general context.

\begin{thm}\label{thm:FoutP}
 Let $\mathcal{C}$ be a class of embeddings of right $S$-acts satisfying Condition (Free). Then the following conditions are equivalent for a monoid $S$:

 (1) the class $\mathcal{CF}$ is axiomatisable;
 
 (2) the class $\mathcal{CF}$ is closed under formation of ultraproducts;
 
 (3) for every skeleton $\mathcal{S} \in \mathbb{S}$ there exist finitely many replacement skeletons $\mathcal{S}_{1},\cdots,\mathcal{S}_{\alpha(\mathcal{S})}$ such that, for any embedding $\gamma: A \to A'$ in $\mathcal{C}$ and any $\mathcal{C}$-flat left $S$-act $B$, if $(a \gamma ,b),\, (a' \gamma , b') \in A' \times B$  are connected by a tossing  $\mathcal{T}$ over $A'$ and $B$ with $ \mathcal{S}(\mathcal{T})= \mathcal{S}$, then $(a,b)$ and $(a',b')$ are connected by a tossing ${\mathcal{T}}^{'}$ over $A$ and $B$ such that $\mathcal{S}({\mathcal{T}}^{'})= \mathcal{S}_{k}$, for some $ k \in \{1,\cdots,\alpha(\mathcal{S})\}$;
 
 (4) for every skeleton $\mathcal{S} \in \mathbb{S}$ there exist finitely many replacement skeletons $\mathcal{S}_{1},\cdots,\mathcal{S}_{\beta(\mathcal{S})}$ such that, for any $\mathcal{C}$-flat left $S$-act $B$, if $(u_{\mathcal{S}} \tau_{\mathcal{S}},b)$ and $(u'_{\mathcal{S}} \tau_{\mathcal{S}},b')$ are connected by a tossing $\mathcal{T}$ over $W'_{\mathcal{S}}$ and $B$ with $\mathcal{S}(\mathcal{T}) = \mathcal{S}$, then $(u_{\mathcal{S}}, b)$, and $(u'_{\mathcal{S}},b')$ are connected by a tossing ${\mathcal{T}}^{'}$ over $W_{\mathcal{S}}$ and $B$ such that $\mathcal{S}({\mathcal{T}}^{'})= \mathcal{S}_{k}$, for some $ k \in \{1,\cdots,\beta(\mathcal{S})\}.$
\end{thm}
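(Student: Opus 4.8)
The plan is to prove the cycle of implications $(1)\Rightarrow(2)\Rightarrow(3)\Rightarrow(4)\Rightarrow(1)$, following the template of Bulman-Fleming and Gould but phrased abstractly in terms of the class $\mathcal{C}$ and the formulas $\delta_{\mathcal{S}}$, $\gamma_{\mathcal{S}}$. The implication $(1)\Rightarrow(2)$ is immediate from \L os's Theorem (Theorem~\ref{thm:los}), since an axiomatisable class is closed under ultraproducts. The genuinely delicate direction is $(2)\Rightarrow(3)$, so I would tackle it by contraposition: assuming $(3)$ fails, I would build an ultraproduct of $\mathcal{C}$-flat acts that is not $\mathcal{C}$-flat.

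For $(2)\Rightarrow(3)$, suppose $(3)$ fails for some skeleton $\mathcal{S}$. Then no finite family of replacement skeletons suffices, so for each finite set of skeletons $F\subseteq\mathbb{S}$ there is an embedding $\gamma_F:A_F\to A_F'$ in $\mathcal{C}$, a $\mathcal{C}$-flat act $B_F$, and a tossing over $A_F'$ and $B_F$ with skeleton $\mathcal{S}$ connecting $(a_F\gamma_F,b_F)$ to $(a_F'\gamma_F,b_F')$, yet $(a_F,b_F)$ and $(a_F',b_F')$ are \emph{not} connected over $A_F$ and $B_F$ by any tossing whose skeleton lies in $F$. Indexing by the finite subsets of $\mathbb{S}$ ordered by inclusion and choosing an ultrafilter containing all the up-sets $\{F: \mathcal{S}_0\in F\}$, I would form the ultraproducts $B=\prod B_F/\mathcal{U}$, $A=\prod A_F/\mathcal{U}$, $A'=\prod A_F'/\mathcal{U}$, together with the induced embedding $\overline{\gamma}$. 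Using Remark~\ref{rem:formulas}$(i)$ the original tossings give, in the ultraproduct, that $\delta_{\mathcal{S}}$ holds of the relevant coordinates in $A'$ and $\gamma_{\mathcal{S}}$ holds in $B$, so $(\overline{a}\,\overline{\gamma},\overline{b})$ and $(\overline{a}'\overline{\gamma},\overline{b}')$ are connected over $A'$ and $B$. The key point, exactly as in the classical argument, is that \emph{no} tossing over $A$ and $B$ can connect $(\overline{a},\overline{b})$ to $(\overline{a}',\overline{b}')$: any such tossing has a fixed finite skeleton $\mathcal{S}_0$, and by \L os's Theorem its existence in the ultraproduct would force, on a $\mathcal{U}$-large set of indices $F$, a tossing with skeleton $\mathcal{S}_0$ over $A_F$ and $B_F$ — but for all $F$ with $\mathcal{S}_0\in F$ this was excluded by construction. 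Hence $B$ witnesses a failure of $\mathcal{C}$-flatness while lying in the ultraproduct of $\mathcal{C}$-flat acts, contradicting $(2)$. The main obstacle here is the finiteness bookkeeping: one must show that the $B_F$ can genuinely be chosen $\mathcal{C}$-flat and that the selected elements assemble coherently under the ultrafilter, and that the ``not connected by any skeleton in $F$'' clause transfers correctly through \L os to kill all possible tossings in the limit.

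The implication $(3)\Rightarrow(4)$ is essentially a specialisation: $(4)$ is the instance of $(3)$ obtained by taking the particular embedding $\tau_{\mathcal{S}}:W_{\mathcal{S}}\to W'_{\mathcal{S}}$ supplied by Condition (Free), with $a=u_{\mathcal{S}}$, $a'=u'_{\mathcal{S}}$. So one simply sets $\beta(\mathcal{S})=\alpha(\mathcal{S})$ and reads off the conclusion, noting that the replacement skeletons produced for the universal embedding $\tau_{\mathcal{S}}$ work verbatim. This step uses only that $\tau_{\mathcal{S}}\in\mathcal{C}$, which is part of Condition (Free).

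Finally, for $(4)\Rightarrow(1)$, I would exhibit an explicit axiomatisation. The sentences $\Sigma_S$ cut out $S$-Act; to these I adjoin, for each skeleton $\mathcal{S}$, a first-order sentence asserting the content of $(4)$, namely that whenever $\gamma_{\mathcal{S}}$-type data connecting $(u_{\mathcal{S}}\tau_{\mathcal{S}},x)$ to $(u'_{\mathcal{S}}\tau_{\mathcal{S}},x')$ exists, then one of the finitely many replacement tossings with skeleton $\mathcal{S}_k$ ($1\le k\le\beta(\mathcal{S})$) also connects $(u_{\mathcal{S}},x)$ to $(u'_{\mathcal{S}},x')$. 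Because each $\mathcal{S}_k$ has fixed finite length, the existence of such a tossing is expressible by a formula of $L_S$ built from $\gamma_{\mathcal{S}_k}$ using finitely many existential quantifiers over the intermediate $B$-elements, and the finitely many alternatives are combined by a finite disjunction; the whole implication is then universally quantified over $x,x'$. Invoking Lemma~\ref{lem:cfsacts}, a left $S$-act $B$ is $\mathcal{C}$-flat precisely when these replacement conditions hold for every $\mathcal{S}$, so the resulting set of sentences axiomatises $\mathcal{CF}$. The only care needed is to verify that the skeleton-encoding formulas use no constants beyond the unary operations $\lambda_s$ and thus genuinely lie in $L_S$; since each $s_i,t_i$ appears as a fixed element of $S$ this is routine, and it closes the cycle.
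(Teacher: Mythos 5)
Your implications $(1)\Rightarrow(2)$, $(3)\Rightarrow(4)$ and $(4)\Rightarrow(1)$ are sound: the last correctly routes through Lemma~\ref{lem:cfsacts}, provided you restrict the disjunction to those $\mathcal{S}_k$ for which $\delta_{\mathcal{S}_k}(u_{\mathcal{S}},u'_{\mathcal{S}})$ actually holds in $W_{\mathcal{S}}$ and handle the empty case by $(\forall x,x')\neg\gamma_{\mathcal{S}}(x,x')$. The gap is in $(2)\Rightarrow(3)$. Your construction takes embeddings $\gamma_F:A_F\to A'_F$ that vary with $F$ and passes to $\overline{\gamma}:\prod A_F/\mathcal{U}\to\prod A'_F/\mathcal{U}$. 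But $\mathcal{C}$ is an arbitrary class of embeddings, and there is no reason why $\overline{\gamma}$ should belong to $\mathcal{C}$ (Lemma~\ref{lem:CFree} covers only products, and even that uses Condition (Free)). Hence the configuration you produce in the limit --- $(\overline{a}\,\overline{\gamma},\overline{b})$ and $(\overline{a}'\overline{\gamma},\overline{b}')$ connected over $A'$ while $(\overline{a},\overline{b})$ and $(\overline{a}',\overline{b}')$ are not connected over $A$ --- is not a failure of $\mathcal{C}$-flatness of $B=\prod B_F/\mathcal{U}$, so no contradiction with $(2)$ results. Tellingly, your argument for this step never invokes Condition (Free); were it correct, it would establish the uniform condition $(3)$ for an arbitrary class $\mathcal{C}$, whereas Theorem~\ref{thm:sactsoutFP} shows that without Condition (Free) one can only obtain replacement skeletons depending on the individual data $a,a',\mu$.

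The repair, which is the route the paper intends (following \cite{bulmanfleminggould} and \cite{gouldshaheen1}; the proof is not printed here but deferred to \cite{shaheen:2010}), is to run the ultraproduct argument against the \emph{fixed} embedding $\tau_{\mathcal{S}}:W_{\mathcal{S}}\to W'_{\mathcal{S}}$ supplied by Condition (Free), i.e.\ to prove $(2)\Rightarrow(4)$: only the $B_F$ vary, $\tau_{\mathcal{S}}$ lies in $\mathcal{C}$, so $\mathcal{C}$-flatness of $\prod B_F/\mathcal{U}$ genuinely yields the contradicting tossing over $W_{\mathcal{S}}$, whose skeleton then transfers back by \L os. One then proves $(4)\Rightarrow(3)$ using the universal property in Definition~\ref{defi:Cfree}: given $\mu:A\to A'$ in $\mathcal{C}$ and a tossing over $A'$ with skeleton $\mathcal{S}$, the truth of $\delta_{\mathcal{S}}(a\mu,a'\mu)$ provides $\nu:W'_{\mathcal{S}}\to A'$ with $W_{\mathcal{S}}\tau_{\mathcal{S}}\nu\subseteq A\mu$, and a replacement tossing over $W_{\mathcal{S}}$ pushes forward along $w\mapsto (w\tau_{\mathcal{S}}\nu)\mu^{-1}$ to one over $A$ with the same skeleton, by Remark~\ref{rem:formulas}$(ii)$. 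With the cycle rearranged as $(1)\Rightarrow(2)\Rightarrow(4)\Rightarrow(1)$ together with $(3)\Leftrightarrow(4)$, your remaining steps go through unchanged.
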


We recall that the definition of a flat $S$-act is that it is $\mathcal{C}$-flat where
$\mathcal{C}$ is the class of {\em all} embeddings of right $S$-acts.

By Lemma~\ref{lem:allfree}, the class of all right $S$-acts has Condition (Free),  so from
Theorem~\ref{thm:FoutP}, we immediately have the following corollary. Note the extra equivalent condition, to bring it into line with \cite[Theorem 12]{bulmanfleminggould}.

\begin{cor}\label{cor:flat} \cite{bulmanfleminggould} The following conditions are equivalent for a monoid $S$:

 (1) the class $\mathcal{F}$ is axiomatisable;
 
 (2) the class $\mathcal{F}$ is closed under formation of ultraproducts;
 
  (3) for every skeleton $\mathcal{S} \in \mathbb{S}$ there exist finitely many replacement skeletons $\mathcal{S}_{1},\cdots,\mathcal{S}_{\alpha(\mathcal{S})}$ such that, for any right $S$-act 
  embedding $\gamma:A\rightarrow A'$, and any flat left $S$-act $B$, if $(a\gamma  ,b),\, (a'\gamma , b') \in A' \times B$  are connected by a tossing  $\mathcal{T}$ over $A'$ and $B$ with $ \mathcal{S}(\mathcal{T})= \mathcal{S}$, then $(a,b)$ and $(a',b')$ are connected by a tossing ${\mathcal{T}}^{'}$ over $A$ and $B$ such that $\mathcal{S}({\mathcal{T}}^{'})= \mathcal{S}_{k}$, for some $ k \in \{1,\cdots,\alpha(\mathcal{S})\}$;
 
 (4) for every skeleton $\mathcal{S} \in \mathbb{S}$ there exist finitely many replacement skeletons $\mathcal{S}_{1},\cdots,\mathcal{S}_{\alpha(\mathcal{S})}$ such that, for any right $S$-act $A$ and any flat left $S$-act $B$, if $(a  ,b),\, (a' , b') \in A \times B$  are connected by a tossing  $\mathcal{T}$ over $A$ and $B$ with $ \mathcal{S}(\mathcal{T})= \mathcal{S}$, then $(a,b)$ and $(a',b')$ are connected by a tossing ${\mathcal{T}}^{'}$ over $aS \cup a'S$ and $B$ such that $\mathcal{S}({\mathcal{T}}^{'})= \mathcal{S}_{k}$, for some $ k \in \{1,\cdots,\alpha(\mathcal{S})\}$;
 
 (5) for every skeleton $\mathcal{S} \in \mathbb{S}$ there exists finitely many replacement skeletons $\mathcal{S}_{1},\cdots,\mathcal{S}_{\beta(\mathcal{S})}$ such that, for any flat left $S$-act $B$, if $([x],b)$ and $([x'],b') $ are connected by a tossing $\mathcal{T}$ over $F^{m+1}/\rho_{\mathcal{S}}$ and $B$ with $\mathcal{S}(\mathcal{T}) = \mathcal{S}$, then $([x], b)$, and $([x'],b')$ are connected by a tossing ${\mathcal{T}}^{'}$ over $[x]S \cup [x']S$ and $B$ such that $\mathcal{S}({\mathcal{T}}^{'})= \mathcal{S}_{k}$, for some $ k \in \{1,\cdots,\beta(\mathcal{S})\}.$

\end{cor}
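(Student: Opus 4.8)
The plan is to obtain Corollary~\ref{cor:flat} as the specialisation of Theorem~\ref{thm:FoutP} to the case where $\mathcal{C}$ is the class of \emph{all} embeddings of right $S$-acts. By Lemma~\ref{lem:allfree} this class satisfies Condition (Free), realised by $W_{\mathcal{S}}=[x]S\cup[x']S$, $W'_{\mathcal{S}}=F^{m+1}/\rho_{\mathcal{S}}$, with $\tau_{\mathcal{S}}$ the inclusion and $u_{\mathcal{S}}=[x]$, $u'_{\mathcal{S}}=[x']$. Reading off conditions (1)--(4) of Theorem~\ref{thm:FoutP} in this case gives exactly conditions (1), (2), (3) and (5) of the corollary; in particular (5) is just Theorem~\ref{thm:FoutP}(4) for this $\mathcal{C}$, with $\beta(\mathcal{S})$ the relevant bound. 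So these four are equivalent with no further work, and the only new content is to weave condition (4) into the chain. I would do this by establishing (3) $\Leftrightarrow$ (4), which is the familiar principle that flatness of $B$ may be tested on the cyclic-pair subacts $aS\cup a'S$ rather than on arbitrary embeddings.

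For (3) $\Rightarrow$ (4) I would simply apply (3) to the inclusion $\iota:aS\cup a'S\hookrightarrow A$: given a right $S$-act $A$, elements $a,a'$ and a tossing over $A$ with skeleton $\mathcal{S}$, taking $\gamma=\iota$ (so that $A$ plays the role of $A'$ and $aS\cup a'S$ the role of $A$) produces a tossing over $aS\cup a'S$ with one of the replacement skeletons $\mathcal{S}_1,\ldots,\mathcal{S}_{\alpha(\mathcal{S})}$. The same skeletons serve for both conditions, which is why the identical bound $\alpha(\mathcal{S})$ appears.

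For (4) $\Rightarrow$ (3) I would transport along the embedding. Given $\gamma:A\to A'$ and a tossing over $A'$ connecting $(a\gamma,b)$ to $(a'\gamma,b')$ with skeleton $\mathcal{S}$, note that it lives over $a\gamma S\cup a'\gamma S=(aS\cup a'S)\gamma$. Applying (4) to $A'$ with the pair $a\gamma,a'\gamma$ yields a tossing over $(aS\cup a'S)\gamma$ with some replacement skeleton $\mathcal{S}_k$; since $\gamma$ is injective, every act-entry of this tossing is $c\gamma$ for a unique $c\in aS\cup a'S$, and cancelling $\gamma$ from each equation $c_i\gamma\,s_i=c_{i+1}\gamma\,t_i$ delivers a tossing over $aS\cup a'S\subseteq A$ with the same skeleton $\mathcal{S}_k$, as (3) demands.

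The hard part is really only the bookkeeping in this last step: one must check that every intermediate entry of the replacement tossing lies in the image $(aS\cup a'S)\gamma$ (which holds because the whole tossing is taken over that subact) and that injectivity of $\gamma$ legitimately cancels $\gamma$ from each relation. Beyond that there is nothing to prove, since Theorem~\ref{thm:FoutP} hands us conditions (1), (2), (3) and (5) directly; alternatively the equivalence of (4) and (5) could be read off from the Finitely Presented Flatness Lemma (Lemma~\ref{lem:fflat}), which is precisely the statement that flatness may be tested on the finitely presented acts $F^{m+1}/\rho_{\mathcal{S}}$.
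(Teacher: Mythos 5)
Your proposal is correct and follows the paper's own route exactly: the paper obtains (1), (2), (3), (5) by specialising Theorem~\ref{thm:FoutP} to the class of all embeddings via Lemma~\ref{lem:allfree} (note you have rightly swapped the roles of $W_{\mathcal{S}}$ and $W'_{\mathcal{S}}$ relative to the paper's statement of that lemma, which as printed has the inclusion going the wrong way), and merely remarks that (4) is an ``extra equivalent condition''; your (3)~$\Leftrightarrow$~(4) argument by specialising to the inclusion $aS\cup a'S\hookrightarrow A$ and transporting back along the injective $S$-morphism $\gamma$ supplies exactly the missing bookkeeping and is sound. The only stray remark is the claim that the \emph{original} tossing over $A'$ lives over $a\gamma S\cup a'\gamma S$ (it need not), but this is not used: what matters, and what you correctly use, is that the replacement tossing produced by (4) does.
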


\subsection{Axiomatisability of $\mathcal{CF}$ in general case}\label{subsec:sactsoutFP}

We continue to consider a class $\mathcal{C}$ of embeddings of right $S$-acts, but now drop our assumptions that Condition (Free)  holds. The results and proofs of this section are analogous to those for weakly flat  $S$-acts in \cite{bulmanfleminggould}. Note that the conditions in (3) below appear weaker than those in 
Theorem~\ref{thm:FoutP}, as we are only asking that for specific elements $a,a'$ and skeleton
$\mathcal{S}$, there are finitely many replacement skeletons, in the sense made specific below.

\begin{thm}\label{thm:sactsoutFP} Let $\mathcal{C}$ be a class of embeddings of right $S$-acts.

The following conditions are equivalent:

(1) the class $\mathcal{CF}$ is axiomatisable;

(2) the class $\mathcal{CF}$ is closed under ultraproducts;

(3) for every skeleton $\mathcal{S}$ over $S$ and $a,a' \in A$, where $\mu:A \to A'$ is in $\mathcal{C}$, there exist finitely many skeletons $\mathcal{S}_{1},\cdots,\mathcal{S}_{\alpha(a,\mathcal{S},a',\mu)}$, such that for any $\mathcal{C}$-flat left $S$-act $B$, if $(a \mu , b),\,(a' \mu ,b')$ are connected by a tossing $\mathcal{T}$ over $A'$ and $B$ with $\mathcal{S}(\mathcal{T})= \mathcal{S}$, then $(a,b)$ and $(a',b')$ are connected by a tossing $\mathcal{T}'$ over $A$ and $B$ such that $\mathcal{S}(\mathcal{T}')=\mathcal{S}_{k}$, for some $ k \in \{1,\cdots,\alpha(a , \mathcal{S},a',\mu)\}$.
\end{thm}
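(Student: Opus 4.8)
The plan is to prove Theorem~\ref{thm:sactsoutFP} by establishing the cycle $(1)\Rightarrow(2)\Rightarrow(3)\Rightarrow(1)$, exactly mirroring the structure used by Bulman-Fleming and Gould for $\mathcal{WF}$. The implication $(1)\Rightarrow(2)$ is immediate from \L os's Theorem (Theorem~\ref{thm:los}): an axiomatisable class is closed under ultraproducts, and in particular under ultrapowers. The substantive work lies in the remaining two implications, and the absence of Condition (Free) is what forces us to localise the finiteness assertion at each fixed datum $(a,a',\mathcal{S},\mu)$ rather than at each skeleton alone.

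For $(2)\Rightarrow(3)$ I would argue by contraposition. Suppose $(3)$ fails for some skeleton $\mathcal{S}$, some embedding $\mu:A\to A'$ in $\mathcal{C}$, and some $a,a'\in A$: then for \emph{every} finite set of replacement skeletons there is a $\mathcal{C}$-flat left $S$-act witnessing failure. Concretely, enumerating the countably many skeletons $\mathcal{S}_1,\mathcal{S}_2,\dots$ in $\mathbb{S}$, for each $n$ there is a $\mathcal{C}$-flat act $B_n$ and elements $b_n,b_n'\in B_n$ such that $(a\mu,b_n)$ and $(a'\mu,b_n')$ are connected by a tossing over $A'$ and $B_n$ with skeleton $\mathcal{S}$, yet $(a,b_n)$ and $(a',b_n')$ are \emph{not} connected by any tossing over $A$ and $B_n$ whose skeleton lies in $\{\mathcal{S}_1,\dots,\mathcal{S}_n\}$. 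I would then form the ultraproduct $B=\prod_{n}B_n/\Phi$ over a non-principal ultrafilter $\Phi$ on $\mathbb{N}$. By hypothesis $(2)$, $B$ is again $\mathcal{C}$-flat. Using Remark~\ref{rem:formulas}(i), the connectedness of $(a\mu,\overline{b})$ and $(a'\mu,\overline{b'})$ by a tossing with skeleton $\mathcal{S}$ is equivalent to $\gamma_{\mathcal{S}}(\overline b,\overline{b'})$ holding in $B$ (since $\delta_{\mathcal{S}}(a\mu,a'\mu)$ holds in $A'$ and is unaffected by passage to $B$); this first-order statement transfers to $B$ because it holds in each $B_n$. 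On the other hand, for the connectedness over $A$: were $(a,\overline b)$ and $(a',\overline{b'})$ connected over $A$ and $B$ by a tossing of some skeleton $\mathcal{S}_N$, then $\gamma_{\mathcal{S}_N}(\overline b,\overline{b'})$ would hold in $B$, hence on a $\Phi$-large set of indices $n$, contradicting our choice for all $n\ge N$. This yields a $\mathcal{C}$-flat $B$ violating $\mathcal{C}$-flatness at the embedding $\mu$, the desired contradiction.

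For $(3)\Rightarrow(1)$ I would build an explicit axiomatisation. The base is $\Sigma_S$, which forces the models to be left $S$-acts. To capture $\mathcal{C}$-flatness I would add, for each skeleton $\mathcal{S}$ and each relevant datum $(a,a',\mu)$, a sentence asserting: if $\gamma_{\mathcal{S}}(x,x')$ holds (witnessing a tossing from $(a\mu,x)$ to $(a'\mu,x')$ over $A'$), then $\gamma_{\mathcal{S}_k}(x,x')$ holds for some $k\in\{1,\dots,\alpha(a,\mathcal{S},a',\mu)\}$ (witnessing a tossing over $A$). The crucial point is that condition $(3)$ guarantees only \emph{finitely many} replacement skeletons are needed for each datum, so the consequent is a finite disjunction $\bigvee_k \gamma_{\mathcal{S}_k}(x,x')$, which is a genuine first-order formula. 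A left $S$-act $B$ satisfies all these sentences precisely when, for every embedding in $\mathcal{C}$, any equality $a\mu\otimes b=a'\mu\otimes b'$ in $A'\otimes B$ descends to $a\otimes b=a'\otimes b'$ in $A\otimes B$ (via Lemma~\ref{lem:tossing} and Remark~\ref{rem:formulas}), i.e. when $B$ is $\mathcal{C}$-flat.

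The main obstacle is the $(3)\Rightarrow(1)$ direction, specifically justifying that the resulting sentence set genuinely axiomatises $\mathcal{CF}$ and not a larger or smaller class. The delicate issue is that the finiteness of replacement skeletons in $(3)$ is only asserted \emph{relative to $\mathcal{C}$-flat acts $B$}, so one must check that membership in the axiomatised class is equivalent to $\mathcal{C}$-flatness without circularity: a model $B$ of the sentences must be shown $\mathcal{C}$-flat, and conversely every $\mathcal{C}$-flat act must satisfy each sentence, where the latter uses $(3)$ directly and the former requires reconstructing the full tossing-descent from the finitely-many-skeleton version. Verifying that the index set of sentences, though large, produces only finitely-long formulas (because each disjunction is finite by $(3)$) is what makes first-order axiomatisability possible, and this is where I would spend the most care.
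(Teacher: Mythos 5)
Your overall architecture --- $(1)\Rightarrow(2)$ by \L os, $(2)\Rightarrow(3)$ by contraposition through an ultraproduct of witnesses to failure, $(3)\Rightarrow(1)$ by sentences of the form $(\forall x)(\forall x')\bigl(\gamma_{\mathcal{S}}(x,x')\rightarrow\bigvee_k\gamma_{\mathcal{S}_k}(x,x')\bigr)$ indexed by the data $(a,a',\mathcal{S},\mu)$ --- is exactly the Bulman-Fleming--Gould scheme that the paper invokes for this theorem, and Remark~\ref{rem:formulas} is used in just the way you use it. However, there is one genuine gap in your $(2)\Rightarrow(3)$: you enumerate ``the countably many skeletons $\mathcal{S}_1,\mathcal{S}_2,\dots$ in $\mathbb{S}$'' and take a non-principal ultrafilter on $\mathbb{N}$. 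Since $\mathbb{S}$ consists of all even-length finite sequences over $S$, it is countable only when $S$ is; for an uncountable monoid your witnessing family $\{B_n\}$ does not exhaust all finite sets of candidate replacement skeletons, and the final contradiction (``$\gamma_{\mathcal{S}_N}$ holds on a $\Phi$-large set of indices $n$, contradicting the choice for all $n\ge N$'') is unavailable because $\mathcal{S}_N$ need not appear in your enumeration. The standard repair, and the one this paper itself uses in the proofs of Theorems~\ref{thm:condwsact} and~\ref{thm:PWP}, is to index the witnesses by the set $J$ of \emph{finite subsets} of $\mathbb{S}$, choose for each $f\in J$ a $\mathcal{C}$-flat $B_f$ defeating all skeletons in $f$, and take an ultrafilter $\Phi$ on $J$ containing every set $J_{\mathcal{T}}=\{f\in J:\mathcal{T}\in f\}$ (possible since these have the finite intersection property); then any putative replacement skeleton $\mathcal{T}_0$ for the ultraproduct is defeated on the $\Phi$-large set $T\cap J_{\mathcal{T}_0}$.

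Two smaller points in $(3)\Rightarrow(1)$ deserve the care you promise them. First, the disjunction $\bigvee_k\gamma_{\mathcal{S}_k}(x,x')$ only yields a tossing over $A$ if $\delta_{\mathcal{S}_k}(a,a')$ is true in $A$ for the skeleton $\mathcal{S}_k$ that fires; this is automatic for the skeletons supplied by $(3)$ (they arise from actual tossings over $A$), but it should be said, and the degenerate case $\alpha(a,\mathcal{S},a',\mu)=0$ must be read as the sentence $(\forall x)(\forall x')\neg\gamma_{\mathcal{S}}(x,x')$. Second, the sentence for the datum $(a,a',\mathcal{S},\mu)$ should only be included when $\delta_{\mathcal{S}}(a\mu,a'\mu)$ is true in $A'$; otherwise the hypothesis $\gamma_{\mathcal{S}}(x,x')$ does not witness any tossing over $A'$ and the sentence would wrongly constrain $B$. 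With the index set corrected and these provisos made explicit, your argument goes through and coincides with the paper's intended proof.
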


We now explain why the axiomatisability of weakly flat $S$-acts as given in \cite{bulmanfleminggould} then becomes a special case.
We recall that a left $S$-act $B$ is called {\em weakly flat}
 if the functor $- \otimes B $ maps  embeddings
of right ideals in the category of ${\mathbf S}${\bf-Act} to
 one-one maps in the category of  {\bf Set}. So, $B$ is weakly flat
 if it is $\mathcal{C}$-flat where $\mathcal{C}$ is the class of all embeddings of
 right ideals of $S$ into $S$.  In our Corollary, we do not need to mention the embeddings
 $\mu$, since they are all inclusion maps of right ideals into $S$.

\begin{cor}\label{cor:axpweaklyflat}\cite[Theorem 13]{bulmanfleminggould}
The following are equivalent for a monoid $S$:

(i) the class $\mathcal{WF}$ is axiomatisable;

(ii) the class $ \mathcal{WF}$ is closed under ultraproducts;

(iii) for every skeleton $\mathcal{S}$ over $S$ and $a,a' \in S$ there exists finitely many skeletons $\mathcal{S}_1,\cdots, \mathcal {S}_{\beta(a,\mathcal{S},a')} $ over $S$, such that for any weakly flat left $S$-act $B$, if $(a,b)$, $(a',b') \in S \times B$ are connected by a tossing $\mathcal{ T}$ over $S$ and $B$ with $ \mathcal{S(T)}= \mathcal{S}$ then $(a,b)$ and $(a',b')$ are connected by a tossing $\mathcal{T'}$ over $aS \cup a'S$ and $B$ such that $ \mathcal{S(T')}=\mathcal{S}_k$ for some $k \in \{1,\cdots,\beta(a,\mathcal{S},a' )\}$.
\end{cor}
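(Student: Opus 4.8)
The plan is to deduce Corollary~\ref{cor:axpweaklyflat} directly from Theorem~\ref{thm:sactsoutFP} by specialising $\mathcal{C}$ to be the class of all embeddings of right ideals of $S$ into $S$. First I would record the observation, already made in the text preceding the corollary, that with this choice of $\mathcal{C}$ the class $\mathcal{CF}$ coincides with the class $\mathcal{WF}$ of weakly flat left $S$-acts; this is essentially the definition of weak flatness restated in terms of the functor $-\otimes B$ preserving injectivity on the relevant embeddings. Given this identification, the equivalence of (i) and (ii) in the corollary is immediate from the equivalence of (1) and (2) in Theorem~\ref{thm:sactsoutFP}, since $\mathcal{CF}=\mathcal{WF}$.

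The substance of the argument is the translation of condition (3) of the theorem into condition (iii) of the corollary. Here I would exploit the special shape that the embeddings in $\mathcal{C}$ take: every $\mu:A\to A'$ in $\mathcal{C}$ is an inclusion of a right ideal $A$ of $S$ into $A'=S$. Thus I would set $A'=S$ throughout, so that the ambient act over which the tossing $\mathcal{T}$ lives is simply $S$ itself, and the elements $a\mu,a'\mu$ become just $a,a'$ viewed inside $S$. This explains the remark in the excerpt that ``we do not need to mention the embeddings $\mu$'': since $A'$ is always $S$ and $\mu$ is always an inclusion, the data $(a,\mathcal{S},a',\mu)$ collapses to $(a,\mathcal{S},a')$, which is exactly the indexing used for the count $\beta(a,\mathcal{S},a')$ in part (iii). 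I would also note that the source right ideal $A$ may be taken to be $aS\cup a'S$: a tossing over $A$ connecting $(a,b)$ to $(a',b')$ can always be read as one over $aS\cup a'S$, and conversely $aS\cup a'S$ is itself a right ideal admitting an inclusion into $S$ lying in $\mathcal{C}$. Hence the phrase ``over $A$ and $B$'' in the theorem becomes ``over $aS\cup a'S$ and $B$'' in the corollary.

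With these identifications in place, condition (3) of Theorem~\ref{thm:sactsoutFP}, when every $\mu$ is an inclusion of a right ideal and $A'=S$, reads: for every skeleton $\mathcal{S}$ and every $a,a'\in S$ there are finitely many replacement skeletons $\mathcal{S}_1,\dots,\mathcal{S}_{\beta(a,\mathcal{S},a')}$ such that, for any weakly flat $B$, a tossing over $S$ and $B$ connecting $(a,b)$ to $(a',b')$ with skeleton $\mathcal{S}$ can be replaced by a tossing over $aS\cup a'S$ and $B$ whose skeleton is one of the $\mathcal{S}_k$. This is precisely statement (iii). The chain (1)$\Leftrightarrow$(2)$\Leftrightarrow$(3) of the theorem then yields (i)$\Leftrightarrow$(ii)$\Leftrightarrow$(iii) of the corollary.

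The main point requiring care, and the only place where there is anything to check rather than merely to rename, is verifying that restricting the quantification over embeddings $\mu$ in $\mathcal{C}$ to this concrete family of right-ideal inclusions genuinely loses no information: one must confirm both that every tossing situation arising in the corollary comes from an admissible $\mu\in\mathcal{C}$ (so that (3) applies to give (iii)), and that conversely the specialised instances suffice to recover the full force of (3) for $\mathcal{C}$ equal to all right-ideal inclusions (so that (iii) implies (3)). Since for this $\mathcal{C}$ every embedding is, up to isomorphism of the source, an inclusion $K\hookrightarrow S$ of a right ideal, and any relevant pair $a,a'$ generates the right ideal $aS\cup a'S\subseteq K$, both directions reduce to the elementary facts that a tossing over a sub-right-ideal is a tossing over any larger right ideal and that $aS\cup a'S$ is the smallest right ideal containing $a,a'$. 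I would therefore spell out this reduction explicitly and then simply invoke Theorem~\ref{thm:sactsoutFP}.
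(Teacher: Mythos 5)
Your proposal is correct and matches the paper's own (implicit) argument exactly: the corollary is obtained by specialising Theorem~\ref{thm:sactsoutFP} to $\mathcal{C}$ the class of inclusions of right ideals into $S$, identifying $\mathcal{CF}$ with $\mathcal{WF}$, and observing that the data $(a,\mathcal{S},a',\mu)$ collapses to $(a,\mathcal{S},a')$ with $aS\cup a'S$ as the canonical source ideal. The only blemish is the sentence claiming a tossing over $A$ ``can always be read as one over $aS\cup a'S$'' (the containment goes the other way), but your final paragraph uses the correct direction --- a tossing over the sub-ideal $aS\cup a'S$ is one over any larger right ideal $A$ --- so the deduction stands.
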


 We say that  a left $S$-act $B$ is
{\em principally weakly flat} if it is $\mathcal{C}$-flat where $\mathcal{C}$ is the set
of all embeddings of principal right ideals of $S$ into $S$. We end this section by considering the
axiomatisability of principally weakly flat $S$-acts. We first remark that if $aS$ is a principal
right ideal of $S$ and $B$ is a left $S$-act, then
\[au\otimes b=av\otimes b'\mbox{ in }aS\cup B\mbox{ if and only if }
a\otimes ub=a\otimes vb'\mbox{ in }aS\otimes B\]
with a similar statement for $S\otimes B$. Thus $B$ is principally weakly flat if and only if for
all $a\in S$, if $a\otimes b=a\otimes b'$ in $S\otimes B$, then
$a\otimes b=a\otimes b'$ in $aS\otimes B$.

Our next result follows from Theorem~\ref{thm:sactsoutFP} and its proof.

\begin{cor}\label{cor:axiopwfsact}
The following conditions are equivalent for a monoid $\mathcal{S}$:

(i) the class $ \mathcal{PWF}$ is axiomatisable;

(ii) the class $\mathcal{PWF}$ is closed under ultraproducts;

(iii) for every skeleton $\mathcal{S}$ over $S$ and $ a \in S$ there exists finitely many skeletons $ \mathcal{S}_{1},\cdots, \mathcal{S}_{\tau( a, \mathcal{S})}$ over $S$, such that for any principally weakly flat left $S$-act $B$, if $(a,b)$, \, $(a,b') \in S \otimes B$ are connected by a tossing $\mathcal{T}$ over $S$ and $B$ with $\mathcal{S(T)=S}$, then $(a,b)$ and $(a,b')$ are connected by a tossing $\mathcal{T}'$ over $aS $ and $B$ such that $\mathcal{S(\mathcal{T}')}= \mathcal{S}_{k}$ for some $k \in \{ 1,\cdots,\tau(a, \mathcal{S}) \}$.
\end{cor}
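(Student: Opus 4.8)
The plan is to derive Corollary~\ref{cor:axiopwfsact} directly from Theorem~\ref{thm:sactsoutFP} by specialising $\mathcal{C}$ to the class of all embeddings of principal right ideals $aS$ into $S$, and then simplifying the resulting statement using the reduction recorded just before the corollary. First I would invoke the remark that for a principal right ideal, $a\otimes b=a\otimes b'$ in $S\otimes B$ holds if and only if it already holds in $aS\otimes B$; this is precisely what $\mathcal{C}$-flatness means here, so principal weak flatness is exactly $\mathcal{CF}$ for this choice of $\mathcal{C}$. With that identification, the equivalence of (i) and (ii) in the corollary is an immediate instance of the equivalence of (1) and (2) in Theorem~\ref{thm:sactsoutFP}, and requires no further argument.

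The substance is in translating condition (3) of the theorem into condition (iii) of the corollary. Here every embedding $\mu:A\to A'$ in $\mathcal{C}$ is the inclusion $aS\hookrightarrow S$, so $A'=S$ and the ``$a\mu,a'\mu$'' of the theorem become elements of $S$; moreover, since we are dealing with a \emph{principal} ideal generated by a single element, the two base points collapse to a single generator, which is why condition (iii) speaks of pairs $(a,b)$ and $(a,b')$ sharing the same first coordinate rather than distinct $a,a'$. Thus I would observe that for the principal case the relevant data reduce to a single $a\in S$ together with a skeleton $\mathcal{S}$, the finite family of replacement skeletons $\mathcal{S}_1,\dots,\mathcal{S}_{\tau(a,\mathcal{S})}$ being indexed accordingly. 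The tossing over $A'=S$ becomes a tossing over $S$, and the tossing over $A=aS$ becomes a tossing over $aS$, matching condition (iii) verbatim once one notes that $aS\cup aS=aS$.

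Concretely, the steps in order are: (a) fix $\mathcal{C}$ to be all inclusions $aS\hookrightarrow S$ and verify that $\mathcal{CF}=\mathcal{PWF}$ via the displayed remark preceding the corollary; (b) read off (i)$\Leftrightarrow$(ii) as the specialisation of (1)$\Leftrightarrow$(2); (c) specialise condition (3) of Theorem~\ref{thm:sactsoutFP}, substituting $A'=S$, $A=aS$, $\mu=$ inclusion, and collapsing $a'$ to $a$ because the ideal has a single generator, to obtain condition (iii). The phrase ``and its proof'' in the sentence introducing the corollary signals that one should lean on the internal mechanics of the theorem's argument rather than only its statement, so at step (c) I would check that the construction producing the finitely many replacement skeletons goes through unchanged when the two source elements coincide.

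The main obstacle I anticipate is purely bookkeeping rather than conceptual: making precise the collapse from the two-generator data $(a,a',\mu)$ of the general theorem to the single-generator data $(a,\mathcal{S})$ of the principal case, and confirming that nothing in the proof of Theorem~\ref{thm:sactsoutFP} secretly relied on $a$ and $a'$ being distinct or on $A'$ being larger than a principal ideal's ambient monoid. One must also be slightly careful that the notation $S\otimes B$ appearing in (iii) is read correctly, identifying $(a,b)\in S\times B$ with $a\otimes b\in S\otimes B$ so that ``connected by a tossing over $S$ and $B$'' has its intended meaning. Once these identifications are pinned down, the corollary follows with no new estimates.
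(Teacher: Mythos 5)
Your proposal is correct and follows exactly the route the paper takes: the paper derives Corollary~\ref{cor:axiopwfsact} by specialising Theorem~\ref{thm:sactsoutFP} (and its proof) to the class $\mathcal{C}$ of embeddings of principal right ideals into $S$, using the displayed remark that $au\otimes b=av\otimes b'$ can be rewritten as $a\otimes ub=a\otimes vb'$ to collapse the two base points $a,a'$ to the single generator $a$. Your steps (a)--(c), including the observation that the embeddings $\mu$ need not be mentioned since they are all inclusions $aS\hookrightarrow S$, match the paper's (terse) argument.
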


\section{Axiomatisability of specific classes of $S$-acts}\label{sec:axsacts}

We now examine specific classes of $S$-acts 
 which can be axiomatisable by various techniques. Axiomatisability of classes $\mathcal{E}$ and $\mathcal{P}$ using the ``elements'' method are given in \cite{gould}, we will be discussing axiomatisability of these classes by using ``replacement tossings'' methods here.

 To axiomatise classes such as $\mathcal{EP},\,\mathcal{W},\,\mathcal{PWP}$  we use  
 both methods of proof, i.e.  ``elements'' and  ``replacement tossings'' methods.

\subsection{Axiomatisability of Condition $(P)$ for $S$-acts}\label{sec:axiocondpsact}

Let $S$ be a monoid. For any $s,t\in S$ we put
\[\mathbf{R}(s,t)=\{ (u,v)\in S\times S:su=tv\}\]
and notice that $\mathbf{R}=\emptyset$ or is an $S$-subact of $S\times S$.

 The following result is implicit in \cite{gould} and made explicit in \cite{gouldpalyutin}.

\begin{thm}\label{thm:P}\cite{gould,gouldpalyutin} The following conditions are equivalent for a monoid $S$:

(i) the class $\mathcal{P}$ is axiomatisable;

(ii)  the class $\mathcal{P}$ is closed under ultraproducts;

(iii)  the class $\mathcal{P}$ is  closed under ultrapowers;

(iv) every ultrapower of $S$ lies in $\mathcal{P}$;

(v) for any $s,t\in S$, $\mathbf{R}(s,t)=\emptyset$ or is finitely generated. 
\end{thm}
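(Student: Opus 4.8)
The plan is to establish the cycle of implications. Several of these are essentially free: since an ultrapower is a special ultraproduct, $(ii)\Rightarrow(iii)\Rightarrow(iv)$ is immediate, and $(i)\Rightarrow(ii)$ is \L os's Theorem (Theorem~\ref{thm:los}). The substance of the argument lies in closing the loop, and the natural route is $(iv)\Rightarrow(v)\Rightarrow(i)$, which I would do by a direct translation between the algebra of the subacts $\mathbf{R}(s,t)$ and first-order sentences in $L_S$.

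The heart of the matter is the characterisation $(iv)\Leftrightarrow(v)$. Here the plan is to unwind what Condition $(P)$ says about the canonical ultrapower $S^I/\Phi$. Recall that Condition $(P)$ asks: whenever $sb=s'b'$, there is $b''$ and $u,u'$ with $b=ub''$, $b'=u'b''$, $su=s'u'$. For the element $1\in S$ viewed inside an ultrapower, an instance $s\bar f=t\bar g$ (with $\bar f,\bar g$ classes of $I$-indexed families of \emph{elements of $S$}) corresponds, componentwise, to a family of solutions in $\mathbf{R}(s,t)$. I would argue that $S^I/\Phi\in\mathcal{P}$ for every ultrapower precisely when, for each $(s,t)$, the subact $\mathbf{R}(s,t)$ is either empty or finitely generated: if $\mathbf{R}(s,t)$ is not finitely generated one builds, by a standard ``non-finite-generation gives an escaping sequence'' construction, an element of a suitable ultrapower witnessing a failure of $(P)$; conversely finite generation of every $\mathbf{R}(s,t)$ lets one bound the search for the interpolating data uniformly, so that a componentwise witness assembles into a genuine witness in the ultrapower. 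The link is that membership of $(u,v)$ in $\mathbf{R}(s,t)$ is exactly the atomic condition $su=tv$, and a generating set $\{(u_i,v_i):i\in F\}$ with $F$ finite turns the existential quantifier over $b''$ into a finite disjunction.

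For $(v)\Rightarrow(i)$ I would write down the axioms explicitly. For each pair $s,t\in S$ with $\mathbf{R}(s,t)\neq\emptyset$, fix a finite generating set $\{(u_i,v_i):1\le i\le n_{s,t}\}$ of $\mathbf{R}(s,t)$ as an $S$-subact of $S\times S$. The required sentence, adjoined to $\Sigma_S$, is
\[
(\forall x)(\forall y)\Big(sx=ty\rightarrow \bigvee_{i=1}^{n_{s,t}}(\exists z)(x=u_iz\wedge y=v_iz)\Big),
\]
together with the sentence $(\forall x)(\forall y)(sx=ty\rightarrow \bot)$ in the case $\mathbf{R}(s,t)=\emptyset$ (formally encoded as the negation of satisfiability of $sx=ty$). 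The point is that $(u,v)\in\mathbf{R}(s,t)$ lies in the subact generated by the $(u_i,v_i)$ iff $u=u_iw,\ v=v_iw$ for some $i$ and some $w\in S$, and this is exactly what the disjunction over $i$ with the witness $z$ records; finiteness of the generating set is what keeps the disjunction finite and hence the formula first-order. I would then verify that a left $S$-act $B$ satisfies all these sentences if and only if $B$ satisfies Condition $(P)$, giving axiomatisability of $\mathcal{P}$.

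The main obstacle I anticipate is the direction $(iv)\Rightarrow(v)$: showing that failure of finite generation of some $\mathbf{R}(s,t)$ forces an ultrapower of $S$ out of $\mathcal{P}$. The delicate point is to produce, from an infinite ascending chain of subacts of $\mathbf{R}(s,t)$ with no finite generating set, families $(f_j),(g_j)$ indexed by $I$ whose classes $\bar f,\bar g$ in the ultrapower satisfy $s\bar f=t\bar g$ but admit no interpolating $\bar b''$ and $\bar u,\bar u'$ over the ultrapower, and for this one must choose the index set and ultrafilter (typically $I=\mathbb{N}$ with a non-principal ultrafilter) so that the ``diagonal'' escapes every finitely generated piece on a large set of coordinates. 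This is the genuinely model-theoretic step; the remaining implications are bookkeeping by comparison. Since the statement is attributed to \cite{gould,gouldpalyutin}, I would follow the finitary-condition template already used there for $\mathcal{SF}$, adapting it to the single relation $\mathbf{R}(s,t)$.
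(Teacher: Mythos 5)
First, note that the paper never proves Theorem~\ref{thm:P}: it is quoted from \cite{gould,gouldpalyutin} (``implicit in \cite{gould} and made explicit in \cite{gouldpalyutin}''). The benchmarks inside this paper are therefore the proofs of the analogous Theorems~\ref{thm:condwsact} (Condition (W)) and~\ref{thm:PWP} (Condition (PWP)), and your template --- \L os's Theorem for $(i)\Rightarrow(ii)$, the trivial inclusions for $(ii)\Rightarrow(iii)\Rightarrow(iv)$ (where you should say explicitly that $S$ itself lies in $\mathcal{P}$, via $b=b\cdot 1$, $b'=b'\cdot 1$), and, for $(v)\Rightarrow(i)$, explicit sentences whose finite disjunction ranges over a fixed finite generating set of $\mathbf{R}(s,t)$ --- is exactly the one those proofs follow. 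Your verification of $(v)\Rightarrow(i)$ is sound and matches the paper's $\Sigma_{\mathcal{W}}$ and $\Sigma_{\mathcal{PWP}}$ constructions.

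The one step that would fail as literally described is $(iv)\Rightarrow(v)$ with $I=\mathbb{N}$ and an arbitrary non-principal ultrafilter. If you greedily choose $(u_n,v_n)\in\mathbf{R}(s,t)$ with $(u_{n+1},v_{n+1})\notin\bigcup_{i\le n}(u_i,v_i)S$ and set $\bar f=(u_n)_\Phi$, $\bar g=(v_n)_\Phi$, then Condition (P) in $S^{\mathbb{N}}/\Phi$ hands you a single pair $(p,q)\in\mathbf{R}(s,t)$ and an infinite $T\in\Phi$ with $(u_n,v_n)\in(p,q)S$ for all $n\in T$; but $(p,q)$ need not be any of your $(u_n,v_n)$, and one principal subact can contain infinitely many terms of your sequence without any of them dividing another, so no contradiction follows. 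The paper's two model constructions repair precisely this point: either index by the set $J$ of finite subsets $f\subseteq\mathbf{R}(s,t)$, choose $(u_f,v_f)\in\mathbf{R}(s,t)\setminus fS$, and take an ultrafilter containing every $J_{(u,v)}$, so the witness $(p,q)$ is forced into some $f$ on a large set (as in the proof of Theorem~\ref{thm:PWP}); or index by a generating family of minimal infinite cardinality $\gamma$ arranged so each generator escapes its predecessors, and use a uniform ultrafilter, so the witness lies in $(u_\sigma,v_\sigma)S$ for some $\sigma<\gamma$ while the large set contains an index $\alpha>\sigma$ (as in the proof of Theorem~\ref{thm:condwsact}). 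Either fix is routine, but one of them is genuinely needed; ``non-principal ultrafilter on $\mathbb{N}$'' alone does not close the argument.
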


We now rephrase the above in terms of replacement tossings.

\begin{rem}\label{rem:length2} Observe that if $sa = tb$ for some $s, t \in S, a, b \in B$, then

\[
\begin{array}{rclcrcl}
                  &    &                       & a& = & 1 \, a\\
                 s \, 1 & = & 1 \,s       & s \, a & = & t \, b \\  
                1 \, t & =& t \,1     & 1 \, b &= & b \\
                                                                                                   
\end{array}\]
so $(s,a),\,(t,b)$ are connected via a tossing of length $2$ over $S$ and $B$ with skeleton $(1,\,s,\,t,\,1)$.

Conversely if $(s,a),\, (t,b)$ are connected with skeleton $(1,\,s,\,t,\,1)$ in the way

\[
\begin{array}{rclcrcl}
                  &    &                       & a& = & 1 \, b_1\\
                 s \, 1 & = & a_2 s        & s \, b_1 & = & t \, b_2 \\  
                a_2\,t & =& t \,1     & 1 \, b_2 &= & b \\
                                                                                                   
\end{array}\]
then $sa = tb.$

\end{rem}
\begin{rem}\label{rem:length1}

Suppose  $su = tv ,\,a= uc , b = vc $ then 

\[
\begin{array}{rclcrcl}
                  &    &                       & a& = & u \, c\\
                 su &= & tv       & vc & = & b \\                                                                                                    
\end{array}\]
is a length $1$ tossing connecting $(s,a ) $ to $ (t,b)$ over $S$ and $B$ or over $(sS \cup tS)$ and $B$ with skeleton $(u,v)$.

Conversely if  there exists length $1$ tossing connecting $(s,a)$ to $(t,b)$ over $S$ and $B$ 
with skeleton $(u,v)$ it must look like 

\[
\begin{array}{rclcrcl}
                  &    &                       & a& = & u \, b_1\\
                 su &= &tv     & v b_1 & = & b. \\                                                                                                    
\end{array}\]

so $(u,v) \in \mathbf{R}(s,t)$.
\end{rem}

\begin{cor}\label{cor:axiocondpsacts}

The following conditions are equivalent for a monoid $S$:

(i) the class $\mathcal{P}$ is axiomatisable;

(ii) for every skeleton $\mathcal{S}=(1,\,s,\,t,\,1)$  over $S$, there exists finitely many replacement skeletons $\mathcal{S}_{1}=(u_1,v_1), \cdots, \mathcal{S}_{n({\mathcal{S}})}=(u_{n(\mathcal{S})},
v_{n(\mathcal{S})})$ of length one such that for any $a,b \in B \in \mathcal{P}$ and $sa = tb$ (equivalently,  $(s,a)$ is connected to $ (t,b)$ via a tossing with skeleton $\mathcal{S}$), then $(s,a)$ is connected to $(t,b)$ via a replacement tossing  
with skeleton $\mathcal{S}_{i}$, for some $1 \leq i \leq n(\mathcal{S}).$ 

\end{cor}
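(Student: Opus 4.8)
The plan is to prove Corollary~\ref{cor:axiocondpsacts} by specialising Theorem~\ref{thm:P} and translating its finiteness condition (v) into the language of replacement tossings, using the two dictionary remarks (Remark~\ref{rem:length2} and Remark~\ref{rem:length1}) that immediately precede the statement. The key observation is that Condition~(P) concerns exactly equalities $sa=tb$ with $s,t\in S$ and $a,b$ in a left $S$-act $B$, and Remark~\ref{rem:length2} shows that such an equality is equivalent to saying that $(s,a)$ and $(t,b)$ are connected by a tossing of the specific skeleton $\mathcal{S}=(1,s,t,1)$ over $S$ and $B$. Thus the skeletons of interest are precisely those of this form, and the witnesses to Condition~(P), namely $b''\in B$ and $u,v\in S$ with $a=ub''$, $b=vb''$ and $su=tv$, are by Remark~\ref{rem:length1} exactly the data of a length-one tossing connecting $(s,a)$ to $(t,b)$ with skeleton $(u,v)$, where $(u,v)\in\mathbf{R}(s,t)$.

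First I would prove the implication (i)$\Rightarrow$(ii). Assuming $\mathcal{P}$ is axiomatisable, Theorem~\ref{thm:P} gives that for every $s,t\in S$ the set $\mathbf{R}(s,t)$ is either empty or finitely generated, say by $(u_1,v_1),\dots,(u_{n(\mathcal{S})},v_{n(\mathcal{S})})$. Given a skeleton $\mathcal{S}=(1,s,t,1)$ and any $a,b\in B\in\mathcal{P}$ with $sa=tb$, Condition~(P) supplies a witness $(u,v)\in\mathbf{R}(s,t)$ together with $b''\in B$. Writing $(u,v)$ as a product $(u,v)=(u_i w, v_i w)$ for some generator index $i$ and some $w\in S$ (this is what membership in the $S$-subact generated by the listed pairs means), I would set the replacement skeleton $\mathcal{S}_i=(u_i,v_i)$ and exhibit the length-one tossing $a=u_i(wb'')$, $v_i(wb'')=b$ with $su_i=tv_i$, which connects $(s,a)$ to $(t,b)$ over $S$ and $B$ with skeleton $\mathcal{S}_i$. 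This is precisely the conclusion of (ii).

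Conversely, for (ii)$\Rightarrow$(i), I would show that (ii) forces $\mathbf{R}(s,t)$ to be empty or finitely generated, whence Theorem~\ref{thm:P} gives axiomatisability. The natural route is to let $B=S$, so that an equality $sa=tb$ in $S$ with witnessing length-one replacement tossing of skeleton $\mathcal{S}_i=(u_i,v_i)$ forces $su_i=tv_i$, i.e.\ $(u_i,v_i)\in\mathbf{R}(s,t)$, and to check that the finitely many skeletons $\mathcal{S}_1,\dots,\mathcal{S}_{n(\mathcal{S})}$ produced by (ii) generate $\mathbf{R}(s,t)$: given any $(u,v)\in\mathbf{R}(s,t)$, the pair $(s\cdot u,t\cdot v)$ witnesses $s(u\cdot 1)=t(v\cdot 1)$ in $S$, so $(s,u)$ connects to $(t,v)$ via the skeleton $\mathcal{S}=(1,s,t,1)$, and (ii) replaces this by a length-one tossing with some $\mathcal{S}_i$, which expresses $(u,v)$ as $(u_i w,v_i w)$ for suitable $w$. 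Hence $(u,v)$ lies in the subact generated by the $(u_i,v_i)$.

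The main obstacle I anticipate is keeping the bookkeeping between the two directions of the translation clean, in particular ensuring that the quantifier structure matches: condition~(iii) of Theorem~\ref{thm:P} and generation of $\mathbf{R}(s,t)$ are statements about $S$ alone, whereas condition~(ii) of the Corollary quantifies over all $B\in\mathcal{P}$. The delicate point is that testing on $B=S$ is enough to recover finite generation of $\mathbf{R}(s,t)$, while the same finite family of replacement skeletons must then serve for every $B\in\mathcal{P}$; this works precisely because once $\mathbf{R}(s,t)$ is generated by $(u_i,v_i)$, any witness $(u,v)$ of Condition~(P) in an arbitrary $B$ factors through a generator, and the element $b''$ can be reused after multiplying by the trailing factor $w$. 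Since both directions now reduce to the purely monoid-theoretic equivalence already established in Theorem~\ref{thm:P}, no genuinely new model-theoretic work is needed, and the argument is essentially a translation exercise anchored by Remarks~\ref{rem:length2} and~\ref{rem:length1}.
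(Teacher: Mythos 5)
Your proposal is correct and follows essentially the same route as the paper's own proof: both directions reduce to Theorem~\ref{thm:P} via the dictionary of Remarks~\ref{rem:length2} and~\ref{rem:length1}, with the forward direction factoring the Condition~(P) witness through a generator of $\mathbf{R}(s,t)$ and the converse testing on $B=S$ (using $S\in\mathcal{P}$) to recover finite generation. No substantive differences.
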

\begin{proof} Suppose that $(i)$ holds. Let $\mathcal{S}=(1,s,t,1)$ be a skeleton. From 
Theorem~\ref{thm:P}, $\mathbf{R}(s,t)=\emptyset$ or
$\mathbf{R}(s,t)$ is finitely generated. In the first case, set
$n(\mathcal{S})=0$ and in the second, suppose that
\[\mathbf{R}(s,t)=\bigcup_{i=1}^{i=n}(u_i,v_i)S.\]
Put $n(\mathcal{S})=n$ and let $\mathcal{S}_i=(u_i,v_i)$ for $1\leq i\leq n$.

Let $B\in \mathcal{P}$ and suppose that $sa=tb$ for some $s,t\in S$ and $a,b\in B$.
Then $ss'=tt'$ and $a=s'c$, $b=t'c$ for some $s',t'\in S$ and $c\in B$. But then
$(s',t')=(u_i,v_i)r$ for some $i\in \{ 1,\hdots ,n\}$ and $r\in S$, so that
$a=u_id, b=v_id$ for some $d=rc\in B$ and $(u_i,v_i)$ is the skeleton of a replacement tossing. Hence $(ii)$
holds.

Conversely, suppose that $(ii)$ holds. If $\mathbf{R}(s,t)\neq\emptyset$, let
$(u,v)\in \mathbf{R}(s,t)$. Then $su=tv$ and as $S\in \mathcal{P}$ we have that
there is a replacement tossing with skeleton $(u_i,v_i)$ connecting $(s,u)$ to $(t,v)$. Perforce
we have that $(u_{i},v_{i}) \in \mathbf{R}(s,t), \,u=u_ic,v=v_ic$ so that $(u,v)=(u_i,v_i)c$ for some $c\in S$. It follows
that $\mathbf{R}(s,t)$ is finitely generated. By Theorem~\ref{thm:P}, $\mathcal{P}$ is axiomatisable.
\end{proof}

\subsection{Axiomatisability of Condition $(E)$ for $S$-acts}{\label{sec:axiocondEsact}}

Let $S$ be a monoid. For any $s,t\in S$ we put
\[\mathbf{r}(s,t)=\{ u \in S :su=tu\}\]
and notice that $\mathbf{r}(s,t)=\emptyset$ or is a right ideal of $S$. 

 The following result is given in \cite{gould,gouldpalyutin}.

\begin{thm}\label{thm:E}\cite{gould,gouldpalyutin} The following conditions are equivalent for a monoid $S$:

(i) the class $\mathcal{E}$ is axiomatisable;

(ii)  the class $\mathcal{E}$ is closed under ultraproducts;

(iii)  the class $\mathcal{E}$ is  closed under ultrapowers;

(iv) every ultrapower of $S$ lies in $\mathcal{E}$;

(v) for any $s,t\in S$, $\mathbf{r}(s,t)=\emptyset$ or is finitely generated. 
\end{thm}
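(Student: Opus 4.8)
The plan is to prove Theorem~\ref{thm:E} by establishing the cycle of implications $(i)\Rightarrow(ii)\Rightarrow(iii)\Rightarrow(iv)\Rightarrow(v)\Rightarrow(i)$, exactly as one would for the analogous result on Condition $(P)$ (Theorem~\ref{thm:P}). The implications $(i)\Rightarrow(ii)$ and $(ii)\Rightarrow(iii)\Rightarrow(iv)$ are essentially formal: the first is \L os's Theorem (Theorem~\ref{thm:los}), and the middle ones follow since ultrapowers are special ultraproducts and since $S$ itself, regarded as a left $S$-act, satisfies Condition $(E)$ (for if $su=su$ trivially with $u\in S$, one may take $b''=1$ and the element $u$). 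So the genuine content lies in the two implications $(iv)\Rightarrow(v)$ and $(v)\Rightarrow(i)$, which connect the finitary algebraic condition on the right annihilator-type sets $\mathbf{r}(s,t)$ to the model-theoretic behaviour of $\mathcal{E}$.

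For $(v)\Rightarrow(i)$, I would show directly that finite generation of every nonempty $\mathbf{r}(s,t)$ lets us write Condition $(E)$ as a first-order axiom over $\Sigma_S$. Fix $s,t\in S$. If $\mathbf{r}(s,t)=\emptyset$, then no left $S$-act can have $sb=tb$ for any element $b$ with $b=ub''$, $su=tu$; concretely the sentence $(\forall x)(sx\neq tx)$ is forced, or more precisely we assert that $sx=tx$ can never occur in an act satisfying $(E)$, so the axiom is $(\forall x)\,\neg(sx=tx)$. If instead $\mathbf{r}(s,t)=\bigcup_{i=1}^{n}u_iS$ is finitely generated, I would write the defining sentence
\[
(\forall x)\Big(sx=tx\rightarrow \bigvee_{i=1}^{n}(\exists y)(x=u_iy)\Big),
\]
and argue that a left $S$-act $B$ satisfies $(E)$ precisely when it satisfies this sentence for every pair $(s,t)$. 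The forward reading is the definition of $(E)$ together with the fact that any witnessing $u\in\mathbf{r}(s,t)$ factors as $u=u_ir$; the converse reading recovers a witness $b''$ and element $u=u_i$ directly. Collecting these sentences over all $(s,t)$ and adjoining $\Sigma_S$ gives an axiomatisation.

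For $(iv)\Rightarrow(v)$, the strategy is contrapositive: assuming some $\mathbf{r}(s,t)\neq\emptyset$ is \emph{not} finitely generated, I would manufacture an ultrapower of $S$ that fails Condition $(E)$. Since $\mathbf{r}(s,t)$ is a right ideal that is not finitely generated, I choose a strictly increasing chain of elements $u_1,u_2,\dots$ witnessing that no finite subset generates it, index by $\mathbb{N}$ with a non-principal ultrafilter, and take the diagonal element $\bar u=[(u_n)]$ in the ultrapower $S^{\mathbb{N}}/\Phi$. Then $s\bar u=t\bar u$ holds in the ultrapower, but I would show no factorisation $\bar u=\bar w\,\bar c$ with $s\bar w=t\bar w$ can exist, because any such $\bar w$ would, on an ultrafilter-large set of coordinates, land in a fixed finitely generated piece of $\mathbf{r}(s,t)$, contradicting the choice of the chain. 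This diagonal-plus-ultrafilter construction is the main obstacle: one must arrange the coordinates $u_n$ so that the failure of finite generation is visible ``in the limit'', and verify carefully that the candidate witness cannot be repaired coordinatewise. This is precisely where the finitary nature of the condition is forced, and it mirrors the corresponding step in Theorem~\ref{thm:P}; I would lean on that analogy and on the standard properties of ultrapowers of $S$-acts to complete the argument.
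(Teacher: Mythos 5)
First, note that the paper does not actually prove Theorem~\ref{thm:E}: it is quoted from \cite{gould,gouldpalyutin}, and the nearest in-house model is the fully written-out proof of the analogous Theorem~\ref{thm:condwsact} for Condition (W). Your overall architecture --- the cycle $(i)\Rightarrow(ii)\Rightarrow(iii)\Rightarrow(iv)\Rightarrow(v)\Rightarrow(i)$, \L os's Theorem for the first step, $S\in\mathcal{E}$ for the third, and an explicit set of sentences for $(v)\Rightarrow(i)$ --- is exactly that of the paper's treatment of Conditions (W) and (PWP), and your $(v)\Rightarrow(i)$ is correct as written: for $\mathbf{r}(s,t)=\bigcup_{i=1}^{n}u_iS$ the sentence $(\forall x)\big(sx=tx\rightarrow\bigvee_{i=1}^{n}(\exists y)(x=u_iy)\big)$ does capture Condition (E) for the pair $(s,t)$, because $su_i=tu_i$ holds by the choice of the $u_i$, and the empty case is handled correctly by $(\forall x)(sx\neq tx)$.

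The gap is in $(iv)\Rightarrow(v)$. You take a countable chain $u_1,u_2,\dots$ in $\mathbf{r}(s,t)$ with $u_{n+1}\notin\bigcup_{i\leq n}u_iS$ and the diagonal $\bar u$ in $S^{\mathbb{N}}/\Phi$. If the ultrapower satisfied Condition (E) you would obtain a \emph{single} element $w\in S$ (not an ultrapower element $\bar w$ --- the witness in Condition (E) is a scalar from $S$) with $sw=tw$ and $u_n\in wS$ for all $n$ in some $T\in\Phi$. But this contradicts nothing: $w$ lies in $\mathbf{r}(s,t)$, not necessarily in the subideal $\bigcup_{n}u_nS$, so there need be no $\sigma$ with $w\in u_\sigma S$, and the fact that infinitely many $u_n$ lie in the one principal right ideal $wS$ is perfectly consistent with $u_{n+1}\notin\bigcup_{i\leq n}u_iS$. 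The missing idea (used in the paper's proof of $(iv)\Rightarrow(v)$ of Theorem~\ref{thm:condwsact}) is to take the elements $u_\beta$, $\beta<\gamma$, to be a \emph{generating set} of $\mathbf{r}(s,t)$ of minimal, possibly uncountable, cardinality $\gamma$, arranged so that $u_\beta\notin\bigcup_{\tau<\beta}u_\tau S$, and to use a \emph{uniform} ultrafilter on $\gamma$. Then the witness $w$, being an element of $\mathbf{r}(s,t)$, factors as $w=u_\sigma h$ for some $\sigma<\gamma$; uniformity guarantees that the large set $T$ has cardinality $\gamma$ and hence contains some $\alpha>\sigma$, whence $u_\alpha\in wS\subseteq u_\sigma S$, the desired contradiction. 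Your countable-chain version only goes through when $\mathbf{r}(s,t)$ happens to be countably generated, and even then only if the chain is chosen to generate all of $\mathbf{r}(s,t)$.
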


We now rephrase the above in terms of replacement tossings.

\begin{rem}\label{rem:length1e} Suppose $su = tu$, $a = uc$ then 

\[
\begin{array}{rclcrcl}
                  &    &                       & a& = & u\, c\\
                 su &= &tu       & u c & = & a \\                                                                                                    
\end{array}\]
is a length $1$ tossing connecting $(s,a) $ to $ (t,a)$ over $S$ and $B$ with skeleton $(u,u)$. We will say a skeleton of the form  $(u,u)$ a {\em trivial} skeleton.

Conversely if  there exists length $1$ tossing connecting $(s,a) $ to $(t,a)$ over $S$ and $B$ by a trivial skeleton $(s_1,s_1)$ it must look like 

\[
\begin{array}{rclcrcl}
                  &    &                       & a& = & s_1 \, b_1\\
                 ss_1 &= &ts_1       & s_1 b_1 & = & a \\                                                                                                    
\end{array}\]

notice that $s_1 \in \mathbf{r}(s,t)$.
\end{rem}

\begin{cor}\label{thm:axiocondEsact}
The following conditions are equivalent for a monoid $S$:

(i) the class $\mathcal{E}$ is axiomatisable;

(iii) for every skeleton $\mathcal{S}=(1,\,s,\,t,\,1)$  over $S$, there exists finitely many trivial replacement skeletons $\mathcal{S}_{1}=(u_1,u_1), \cdots, \mathcal{S}_{m(\mathcal{S})}=(u_{m(\mathcal{S})},u_{m(\mathcal{S})})$  such that for any $a \in B \in \mathcal{E}$ and $sa = ta$ (equivalently,  $(s,a)$ is connected to $(t,a)$ via a tossing with skeleton $\mathcal{S}$), then $(s,a)$ is connected to $(t,a)$ via a replacement tossing  of trivial skeleton  $\mathcal{S}_{i}$, $1 \leq i \leq m(\mathcal{S}).$

\end{cor}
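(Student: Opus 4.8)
The plan is to mirror the proof of Corollary~\ref{cor:axiocondpsacts}, replacing the subact $\mathbf{R}(s,t)$ of $S\times S$ by the right ideal $\mathbf{r}(s,t)$, and the appeal to Theorem~\ref{thm:P} by an appeal to Theorem~\ref{thm:E}. The bridge between the two formulations is Remark~\ref{rem:length1e}, which identifies a length-one tossing with trivial skeleton $(u,u)$ connecting $(s,a)$ to $(t,a)$ precisely with the data $su=tu$ together with a factorisation $a=uc$. In other words, trivial replacement skeletons correspond exactly to elements of $\mathbf{r}(s,t)$, and this dictionary drives both implications.

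For $(i)\Rightarrow(iii)$, I would fix a skeleton $\mathcal{S}=(1,s,t,1)$ and invoke Theorem~\ref{thm:E} to learn that $\mathbf{r}(s,t)$ is empty or finitely generated. In the empty case I set $m(\mathcal{S})=0$; otherwise I write $\mathbf{r}(s,t)=\bigcup_{i=1}^{m}u_iS$, put $m(\mathcal{S})=m$, and take the trivial skeletons $\mathcal{S}_i=(u_i,u_i)$. Given $B\in\mathcal{E}$ with $sa=ta$, Condition~(E) supplies $a''\in B$ and $u\in S$ with $a=ua''$ and $su=tu$, so $u\in\mathbf{r}(s,t)$ and hence $u=u_iw$ for some $i$ and $w\in S$. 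Writing $b_1=wa''$ gives $a=u_ib_1$ and $u_ib_1=a$, while $u_i\in\mathbf{r}(s,t)$ forces $su_i=tu_i$; this is exactly a trivial replacement tossing with skeleton $\mathcal{S}_i$ in the shape displayed in Remark~\ref{rem:length1e}.

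For the converse $(iii)\Rightarrow(i)$, I would show $\mathbf{r}(s,t)$ is empty or finitely generated and then quote Theorem~\ref{thm:E}. Assuming $\mathbf{r}(s,t)\neq\emptyset$, I pick $u\in\mathbf{r}(s,t)$, so $su=tu$. Taking $B=S$, which lies in $\mathcal{E}$ since it is strongly flat, and setting $a=u$, the equality $su=tu$ says $(s,u)$ and $(t,u)$ are connected by a tossing with skeleton $\mathcal{S}=(1,s,t,1)$. Condition~(iii) then yields a trivial replacement tossing with some skeleton $\mathcal{S}_i=(u_i,u_i)$, and Remark~\ref{rem:length1e} reads off $u_i\in\mathbf{r}(s,t)$ together with $u\in u_iS$. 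Hence $\mathbf{r}(s,t)\subseteq\bigcup_{i=1}^{m(\mathcal{S})}u_iS$; as each $u_i\in\mathbf{r}(s,t)$ and $\mathbf{r}(s,t)$ is a right ideal, the reverse inclusion is automatic, so $\mathbf{r}(s,t)$ is finitely generated.

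The argument is routine once the dictionary of Remark~\ref{rem:length1e} is in place; the one point requiring care is the verification that the chosen generators $u_i$ of $\mathbf{r}(s,t)$ themselves satisfy $su_i=tu_i$, since this is what guarantees the candidate skeleton $(u_i,u_i)$ genuinely produces a tossing connecting $(s,a)$ to $(t,a)$ rather than merely a factorisation of $a$. This is immediate from $u_i\in\mathbf{r}(s,t)$, but it is precisely the step where the \emph{trivial} shape of the replacement skeleton, forced by $\mathbf{r}(s,t)$ being a one-sided ideal rather than a subact of $S\times S$, becomes essential, and it is what distinguishes this corollary from Corollary~\ref{cor:axiocondpsacts}.
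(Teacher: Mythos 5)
Your proposal is correct and follows essentially the same route as the paper's own proof: both directions rest on Theorem~\ref{thm:E} together with the dictionary of Remark~\ref{rem:length1e} identifying trivial length-one skeletons with elements of $\mathbf{r}(s,t)$. Your write-up is in fact slightly more careful than the paper's, which contains a notational slip in writing the generators of the right ideal $\mathbf{r}(s,t)$ as pairs $(u_i,v_i)$, whereas you correctly treat them as single elements $u_i$ and explicitly verify $su_i=tu_i$.
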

\begin{proof} Suppose that $(i)$ holds. Let $\mathcal{S}=(1,s,t,1)$ be a skeleton. From 
Theorem~\ref{thm:E}, $\mathbf{r}(s,t)=\emptyset$ or
$\mathbf{r}(s,t)$ is finitely generated right ideal of $S$. In the first case, set
$m(\mathcal{S})=0$ and in the second, suppose that
\[\mathbf{r}(s,t)=\bigcup_{i=1}^{i=m}(u_i,v_i)S.\]
Put $m(\mathcal{S})=m$ and let $\mathcal{S}_i=(u_i,v_i)$ for $1\leq i\leq m$.

Let $A \in \mathcal{E}$ and suppose that $sa=ta$ for some $s,t\in S$ and $a\in A$.
Then $ss'=ts'$ and $a=s'c$ for some $s'\in S$ and $c\in A$. But then
$(s',s')=(u_i,v_i)r$ for some $i\in \{ 1,\hdots ,m\}$ and $r\in S$, so that
$a=u_id$ for some $d=rc\in A$ and $(u_i,u_i)$ is the trivial skeleton of a replacement tossing. Hence $(ii)$
holds.

Conversely, suppose that $(ii)$ holds. If $\mathbf{r}(s,t)\neq\emptyset$, let
$u \in \mathbf{r}(s,t)$. Then $su=tu$ and as $S\in \mathcal{E}$ we have that
there is a replacement tossing with trivial skeleton $(u_i,u_i)$ connecting $(s,u)$ to $(t,u)$. 
We have that $u_{i} \in \mathbf{r}(s,t)$ and $u=u_ic$ which gives
that $\mathbf{r}(s,t)$ is finitely generated. By Theorem~\ref{thm:E}, $\mathcal{E}$ is axiomatisable.
\end{proof}

\subsection{Axiomatisability of Condition $(EP)$}\label{subsec:ep}
In \cite{golchin} Akbar Golchin and Hossein Mohammadzadeh defined a new flatness property of acts over monoids which is an extended version of Conditions (E) and (P). Moreover they have shown the following relations exist among Conditions  $(E),\,(P)$ and $(EP)$.

\begin{rem}\cite{golchin} Condition $(E) $ implies  Condition $ (EP)$ and Condition  $(P)$ implies Condition $ (EP)$ but neither converse is true. 
\end{rem}

\begin{thm}\label{thm:(EP)} 
The following conditions are equivalent for a  monoid $S$:

(i) the class $\mathcal{EP}$ is axiomatisable;

(ii) $\mathcal{EP}$ is closed under ultraproducts;

(iii) $\mathcal{EP}$ is closed under ultrapowers;

(iv)  for any $s,t \in S$ either $sa \neq ta$ for all $ a \in A 
\in \mathcal{EP}$ or there exists $f \subseteq \mathbf{R}(s,t)$,\, $ f $ is finite, such that if
$$ sa = ta,\, a \in A \in \mathcal{EP} \,\,\mbox{then} \,\, (a,a)=(u,v)t$$ for some $(u,v) \in f$ and $t \in A.$

\end{thm}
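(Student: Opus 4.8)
The plan is to establish the cycle of implications $(i) \Rightarrow (ii) \Rightarrow (iii) \Rightarrow (iv) \Rightarrow (i)$. The implication $(i) \Rightarrow (ii)$ is immediate from \L os's Theorem (Theorem~\ref{thm:los}), and $(ii) \Rightarrow (iii)$ is trivial, since every ultrapower is a special ultraproduct. Thus the content lies in the two remaining implications: manufacturing explicit axioms from the finitary condition $(iv)$, and conversely extracting $(iv)$ from closure under ultrapowers. Throughout I would write the element witnessing $(iv)$ as $c \in A$ rather than $t$, to avoid a clash with the fixed $t \in S$.

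For $(iv) \Rightarrow (i)$ I would exhibit an explicit set of sentences of $L_S$ augmenting $\Sigma_S$, one for each pair $(s,t) \in S \times S$. If $(s,t)$ falls into the first alternative of $(iv)$, I take the sentence $(\forall x)(sx \neq tx)$. If it falls into the second alternative, with finite witnessing set $f = \{(u_1,v_1),\dots,(u_n,v_n)\} \subseteq \mathbf{R}(s,t)$, I take
\[\psi_{s,t} := (\forall x)\Big(sx = tx \rightarrow \bigvee_{i=1}^{n} (\exists y)(x = u_i y \wedge x = v_i y)\Big).\]
Because each $(u_i,v_i) \in \mathbf{R}(s,t)$ satisfies $su_i = tv_i$ by definition, any model of $\Sigma_S$ together with all these sentences is readily checked to satisfy Condition (EP), and conversely every member of $\mathcal{EP}$ satisfies these sentences by $(iv)$; hence $\mathcal{EP}$ is axiomatised.

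The main work is in $(iii) \Rightarrow (iv)$, which I would prove by contraposition, and this is the step I expect to be the chief obstacle. Assuming $(iv)$ fails, there is a pair $(s,t)$ for which neither alternative holds: some member of $\mathcal{EP}$ has $s$ and $t$ agreeing on an element (so $\mathbf{R}(s,t) \neq \emptyset$), yet for every finite $f \subseteq \mathbf{R}(s,t)$ there is an $\mathcal{EP}$-act $A_f$ and an element $a_f \in A_f$ with $sa_f = ta_f$ but $(a_f,a_f) \neq (u,v)c$ for all $(u,v) \in f$ and $c \in A_f$. I would assemble these into the single act $A = \coprod_f A_f$, the coproduct over all finite $f \subseteq \mathbf{R}(s,t)$; since the $S$-action preserves coproduct components, $A$ again lies in $\mathcal{EP}$. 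I then index by the directed set $I$ of finite subsets of $\mathbf{R}(s,t)$ and choose an ultrafilter $\mathcal{U}$ on $I$ containing every up-set $\hat f = \{g : g \supseteq f\}$ (these have the finite intersection property). In the ultrapower $A^I/\mathcal{U}$ the element $\alpha = [(a_g)_g]$ satisfies $s\alpha = t\alpha$; if $A^I/\mathcal{U}$ were in $\mathcal{EP}$, Condition (EP) would supply $\beta$ and $(u,v) \in \mathbf{R}(s,t)$ with $\alpha = u\beta = v\beta$. Intersecting the $\mathcal{U}$-large sets $\{g : a_g = u\beta_g\}$ and $\{g : a_g = v\beta_g\}$ with $\hat{\{(u,v)\}} \in \mathcal{U}$ yields an index $g$ with $(u,v) \in g$ and $a_g = u\beta_g = v\beta_g$; since $u\beta_g = a_g \in A_g$ and the action preserves components, $\beta_g \in A_g$, so $(a_g,a_g)=(u,v)\beta_g$ with $(u,v)\in g$, contradicting the choice of $a_g$. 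Hence $A^I/\mathcal{U} \notin \mathcal{EP}$, defeating $(iii)$. The two delicate points needing care are verifying that the coproduct of $\mathcal{EP}$-acts is again in $\mathcal{EP}$ and the component-localisation argument that places $\beta_g$ back inside $A_g$.
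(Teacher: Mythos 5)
Your proposal is correct and follows the same overall strategy as the paper: \L os's Theorem for $(i)\Rightarrow(ii)$, the trivial step $(ii)\Rightarrow(iii)$, an ultrafilter on the set $J$ of finite subsets of $\mathbf{R}(s,t)$ containing all the up-sets $J_{(u,v)}$ for the contrapositive of the third implication, and explicit sentences (one per pair $(s,t)$, with an ``empty'' alternative $(\forall x)(sx\neq tx)$ and a disjunctive alternative over the finite set $f$) for $(iv)\Rightarrow(i)$; your $\psi_{s,t}$ coincides with the paper's $\phi_{\rho}$ up to notation. The one genuine difference lies in $(iii)\Rightarrow(iv)$: the paper forms the ultraproduct $\prod_{f\in J}A_f/\Phi$ of the (generally distinct) witnessing acts $A_f$, which as written invokes closure under ultra\emph{products}, i.e.\ condition $(ii)$ rather than $(iii)$; you instead first amalgamate the $A_f$ into the coproduct $A=\coprod_f A_f$, observe that $A\in\mathcal{EP}$ (Condition (EP) is verified componentwise and the $S$-action preserves components), and then take a genuine ultrapower $A^{J}/\mathcal{U}$. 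Your component-localisation step --- deducing $\beta_g\in A_g$ from $u\beta_g=a_g\in A_g$ so that $(a_g,a_g)=(u,v)\beta_g\in gA_g$ --- is exactly what is needed and is sound. What this buys is a derivation of $(iv)$ from closure under ultrapowers alone, which is literally what $(iii)$ asserts; the paper's version is the more economical one if one is content to close the cycle through $(ii)$. Your renaming of the witness $t\in A$ in the statement of $(iv)$ to $c$ is a harmless but sensible editorial improvement.
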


\begin{proof}

The proof is along the similar lines as given for ordered case in \cite{gouldshaheen1}, for class $\mathcal{EP}^{\leq}$, or see detail proof in \cite{shaheen:2010}.

\end{proof}

\begin{rem}\label{rem:EP}

Note that for any $ a \in A \in {\mathcal{EP}}$, if $sa =sa$ then certainly $(a, a) = (1, 1)a$ and
$(1, 1) \in \mathbf{R}(s, s).$ So that to check the condition $(iv)$ of Theorem~\ref{thm:(EP)} holds, it is
enough to consider the cases where $s \not = t.$

 If S is a monoid such that $\mathbf{R}(s, t)$ is finitely generated for all $s, t \in S$
with $s \not = t$, then $\mathcal{EP}$ is axiomatisable. To see this let $S$ be a monoid such that $\mathbf{R}(s,t)$ is finitely generated, let $sa= ta$ for some $a \in A \in \mathcal{EP}$, then $a= u a'=va'$ for some $u,v \in S$ and $a' \in A$ with $su= tv$, so that $(u,v)=(u_{i},v_{i})t$ for some $t \in S$ and $ i \in \{1,\cdots,n\}.$  Now  $a= u_i  a''= v_i  a''$ where we can choose $f=\{(u_1,v_1),\cdots,(u_n,v_n)\}$, condition $(iv)$ of Theorem ~\ref{thm:(EP)} satisfied.

We can conclude that if $\mathcal {P}$ is axiomatisable, so is $\mathcal{EP}$.

\end{rem}

\begin{rem}\label{rem:length1ep} Suppose  $su = tv$, $a = ua''= va''$ then 

\[
\begin{array}{rclcrcl}
                  &    &                       & a& = & u\, a''\\
                 su &= &tv       & v a'' & = & a \\                                                                                                    
\end{array}\]
is a length $1$ tossing connecting $(s,a) $ to $ (t,a)$ over $S$ and $B$ with skeleton $(u,v)$.

Conversely if there exists length $1$ tossing connecting $(s,a) $ to $ (t,a)$ over $S$ and $B$ by a  skeleton $\mathcal{S}=(s_1,t_1)$ it must look like 

\[
\begin{array}{rclcrcl}
                  &    &                       & a& = & s_1 \, a_1\\
                 ss_1 &= &tt_1       & t_1 a_1 & = & a \\                                                                                                    
\end{array}\]

so that $(s_1,t_1) \in \mathbf{R}(s,t)$.
\end{rem}

\begin{rem} From Remark ~\ref{rem:length2} it is obvious that $sa=ta$ if and only if $(s,a)$ connected to $(t,a)$ over $S$ and $B$ via a tossing of length $2$ with skeleton $(1,s,t,1)$.
\end{rem}

\begin {cor}\label{thm:axiocondEPsact}
The following conditions are equivalent for a monoid $S$:

(i) the class $\mathcal{EP}$ is axiomatisable;

(ii) for every skeleton $\mathcal{S}=(1,\,s,\,t,\,1)$  over $S$, there exists finitely many  replacement skeletons $\mathcal{S}_{1}=(u_1,v_1), \cdots, \mathcal{S}_{p(\mathcal{S})}=(u_{p(\mathcal{S})},v_{p(\mathcal{S})})$  such that for any $a \in B \in \mathcal{EP}$ and $sa = ta$ (equivalently,  $(s,a)$ is connected to $(t,a)$ via a tossing with skeleton $\mathcal{S}$), then $(s,a)$ is connected to $(t,a)$ via a replacement tossing  of  skeleton  $\mathcal{S}_{i}$, $1 \leq i \leq p(\mathcal{S}).$

\end{cor}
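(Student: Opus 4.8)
The plan is to follow the template already established for Corollaries~\ref{cor:axiocondpsacts} and~\ref{thm:axiocondEsact}, using Theorem~\ref{thm:(EP)} as the link between axiomatisability and a finiteness statement about $\mathbf{R}(s,t)$, and using Remark~\ref{rem:length1ep} to pass between the algebraic assertion ``$(a,a)=(u,v)c$ with $(u,v)\in\mathbf{R}(s,t)$'' and the existence of a length-one replacement tossing with skeleton $(u,v)$ connecting $(s,a)$ to $(t,a)$. Since Theorem~\ref{thm:(EP)} already supplies the equivalence of its own condition $(i)$ with its condition $(iv)$, it suffices to prove that condition $(ii)$ above is equivalent to condition $(iv)$ of Theorem~\ref{thm:(EP)}.

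For $(i)\Rightarrow(ii)$ I would fix a skeleton $\mathcal{S}=(1,s,t,1)$ and invoke condition $(iv)$ of Theorem~\ref{thm:(EP)} for the pair $(s,t)$. If $sa\neq ta$ for every $a\in A\in\mathcal{EP}$, I set $p(\mathcal{S})=0$, so that the hypothesis $sa=ta$ of $(ii)$ is never met and $(ii)$ holds vacuously. Otherwise $(iv)$ supplies a finite set $f=\{(u_1,v_1),\cdots,(u_p,v_p)\}\subseteq\mathbf{R}(s,t)$, and I put $\mathcal{S}_i=(u_i,v_i)$. Given $a\in B\in\mathcal{EP}$ with $sa=ta$, condition $(iv)$ yields $(a,a)=(u_i,v_i)c$ for some $i$ and some $c\in B$, that is $a=u_ic=v_ic$; combined with $su_i=tv_i$ (which holds since $(u_i,v_i)\in\mathbf{R}(s,t)$), this is precisely the data of the length-one tossing of Remark~\ref{rem:length1ep} with skeleton $\mathcal{S}_i$ connecting $(s,a)$ to $(t,a)$.

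For $(ii)\Rightarrow(i)$ I would verify condition $(iv)$ of Theorem~\ref{thm:(EP)} and then appeal to that theorem. Fixing $s,t\in S$, I apply $(ii)$ to $\mathcal{S}=(1,s,t,1)$ to obtain replacement skeletons $\mathcal{S}_1=(u_1,v_1),\cdots,\mathcal{S}_p=(u_p,v_p)$. By the converse half of Remark~\ref{rem:length1ep}, any skeleton that actually labels a length-one tossing from $(s,a)$ to $(t,a)$ must satisfy $su_i=tv_i$, so I may take $f=\{(u_i,v_i):su_i=tv_i\}$, a finite subset of $\mathbf{R}(s,t)$. If $p=0$ then $(ii)$ can hold only when no $a\in A\in\mathcal{EP}$ satisfies $sa=ta$, which is the first alternative of $(iv)$; otherwise, whenever $sa=ta$ with $a\in A\in\mathcal{EP}$, the promised replacement tossing with some skeleton $\mathcal{S}_i$ reads $a=u_ib_1$, $su_i=tv_i$, $v_ib_1=a$, whence $(a,a)=(u_i,v_i)b_1$ with $(u_i,v_i)\in f$. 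This is exactly condition $(iv)$, so Theorem~\ref{thm:(EP)} gives that $\mathcal{EP}$ is axiomatisable.

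I do not anticipate a substantive obstacle: the corollary is essentially a re-encoding of condition $(iv)$ of Theorem~\ref{thm:(EP)} in the vocabulary of replacement tossings, and the argument runs parallel to those of Corollaries~\ref{cor:axiocondpsacts} and~\ref{thm:axiocondEsact}. The only points needing care are matching the degenerate case $p(\mathcal{S})=0$ against the ``$sa\neq ta$ for all $a$'' branch of $(iv)$, and keeping the monoid element $t$ of the skeleton notationally distinct from the act element written as $t$ in the expression $(u,v)t$ in the statement of Theorem~\ref{thm:(EP)}.
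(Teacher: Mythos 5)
Your proposal is correct and follows exactly the route the paper intends: the paper's own proof consists of a one-line reference to the arguments of Corollaries~\ref{cor:axiocondpsacts} and~\ref{thm:axiocondEsact}, and you carry out precisely that adaptation, replacing finite generation of $\mathbf{R}(s,t)$ by condition $(iv)$ of Theorem~\ref{thm:(EP)} and using Remark~\ref{rem:length1ep} to translate between $(a,a)=(u,v)c$ and length-one replacement tossings. Your handling of the degenerate case $p(\mathcal{S})=0$ and the filtering of the given skeletons down to those actually lying in $\mathbf{R}(s,t)$ are exactly the points of care the paper leaves implicit.
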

\begin{proof} We follow the similar arguments as given in the proof of Corallaries ~\ref{cor:axiocondpsacts} and ~\ref{thm:axiocondEsact}.
\end{proof}

\subsection{Axiomatisability of Condition $(W)$ for $S$-acts}\label{sec:condwsact}
 In \cite{kilpknauer} Bulman-Fleming and McDowell introduced an interpolation type condition called Condition (W). We will describe the condition on a monoid $S$ such that $\mathcal{W}$ is axiomatisable.  
 
 We remind reader the following definition:
\begin{defi} \label{defi:w}

A left $S$-act $A$ satisfies {\em Condition (W)}, if whenever $sa = t a'$ for $ a,a' \in A$, $s,t \in S$, then there exists $ a'' \in A$, $ u \in sS \cap tS$, such that $sa= ta'=u a''$. We will denote the class of left $S$-acts satisfying Condition (W) as $\mathcal{W}$.
\end{defi}

\begin{rem}\label{rem:Sw} The monoid $S$ satisfies Condition (W) as an $S$-act.
\end{rem}

\begin{thm}\label{thm:condwsact}
The following conditions are equivalent for a monoid $S$:

(i) the class $\mathcal{W}$ is axiomatisable;

(ii) $\mathcal{W}$ is closed under ultraproducts;

(iii) $\mathcal{W}$ is closed under ultrapowers;

(iv) every ultrapower of $S$ lies in $\mathcal{W}$;

(v) for any $ s, t \in S$, $ s S \cap t S =  \emptyset $ or $ s S \cap t S$ is finitely generated as a right ideal of $S$.

\end{thm}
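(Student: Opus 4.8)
The plan is to establish the cycle of implications following the familiar pattern already used for $\mathcal{P}$ and $\mathcal{E}$ in this section. The implications $(i)\Rightarrow(ii)\Rightarrow(iii)\Rightarrow(iv)$ are either immediate or follow from standard model theory: $(i)\Rightarrow(ii)$ is \L os's Theorem (Theorem~\ref{thm:los}), $(ii)\Rightarrow(iii)$ is trivial since ultrapowers are special ultraproducts, and $(iii)\Rightarrow(iv)$ holds because $S\in\mathcal{W}$ by Remark~\ref{rem:Sw}. So the substantive work lies in $(iv)\Rightarrow(v)$ and $(v)\Rightarrow(i)$, and I would organise the argument around these two directions.

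For $(iv)\Rightarrow(v)$, I would argue contrapositively. Suppose for some $s,t\in S$ the right ideal $sS\cap tS$ is nonempty but not finitely generated. I would build an ultrapower $S^I/\Phi$ of $S$ (over a suitable index set $I$ with ultrafilter $\Phi$) that fails Condition (W), thereby contradicting $(iv)$. The idea is to choose a sequence of elements $w_n\in sS\cap tS$ that cannot be captured by any finite subset, writing $w_n=ss_n'=tt_n'$, and to form the element of the ultrapower whose coordinates realise these. Concretely, take $a=[(s_n')]$ and $a'=[(t_n')]$ so that $sa=ta'$ holds in the ultrapower; then Condition (W) would demand an interpolating $u=[(u_n)]\in sS\cap tS$ with $sa=ta'=ua''$, and I would show that any such $u$ forces $w_n\in u_nS$ cofinally, contradicting the failure of finite generation via a standard diagonal/cofinality argument. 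The bookkeeping here — choosing the sequence and the index set so that the relevant ``false'' statement is true on a $\Phi$-large set while the Condition (W) witness is obstructed — is the main obstacle, and is exactly the kind of finitary-to-ultrapower translation that underlies the analogous results for $\mathcal{P}$ and $\mathcal{E}$.

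For $(v)\Rightarrow(i)$, I would exhibit an explicit set of sentences in $L_S$ axiomatising $\mathcal{W}$, augmenting $\Sigma_S$. For each pair $s,t\in S$ with $sS\cap tS\neq\emptyset$, finite generation gives generators $u_1,\dots,u_{k(s,t)}$ with $u_j=ss_j'=tt_j'$; the key observation is that whenever $sa=ta'$ with $a,a'$ in an $S$-act $B$, Condition (W) asks for $u\in sS\cap tS$ and $a''$ with $sa=ta'=ua''$, and using finite generation one can show it suffices to interpolate by one of the finitely many generators $u_j$. This lets me write, for each such pair, a first-order sentence of the form
\[
(\forall x)(\forall y)\Big(sx=ty\rightarrow \bigvee_{j=1}^{k(s,t)}(\exists z)(u_jz=sx)\Big),
\]
together with the complementary sentences $(\forall x)(\forall y)(sx\neq ty)$ for pairs with $sS\cap tS=\emptyset$. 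I would then verify that an $S$-act satisfies all these sentences together with $\Sigma_S$ precisely when it satisfies Condition (W), so $\mathcal{W}$ is axiomatisable. The verification that the finitely many generators genuinely suffice to supply an interpolant — i.e.\ that one need not consider all of $sS\cap tS$ — is the crux of this direction, and it mirrors the reduction to a finite generating set already seen in Corollary~\ref{cor:axiocondpsacts}.
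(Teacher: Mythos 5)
Your overall architecture ($(i)\Rightarrow(ii)\Rightarrow(iii)\Rightarrow(iv)$ by \L os, triviality, and $S\in\mathcal{W}$, then $(iv)\Rightarrow(v)$ and $(v)\Rightarrow(i)$) is exactly the paper's, and your $(v)\Rightarrow(i)$ direction is essentially correct: the sentence $(\forall x)(\forall y)\big(sx=ty\rightarrow\bigvee_{j=1}^{k(s,t)}(\exists z)(u_jz=sx)\big)$, together with $(\forall x)(\forall y)(sx\neq ty)$ when $sS\cap tS=\emptyset$, does axiomatise $\mathcal{W}$, and the reduction to the finite generating set (from $sa=ta'=ua''$ with $u=u_jh$ one gets $sa=ta'=u_j(ha'')$) is the same short verification the paper gives.

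The genuine gap is in $(iv)\Rightarrow(v)$. You write that Condition (W) in the ultrapower ``would demand an interpolating $u=[(u_n)]\in sS\cap tS$'', but the interpolant in Condition (W) is a single, fixed element of the monoid $S$ (the language $L_S$ has only a unary function symbol for each element of $S$); it is not a tuple of the ultrapower. This is not a notational quibble --- it is the entire engine of the contradiction. Because $u$ is one element of $sS\cap tS$, it lies in $u_\sigma S$ for a \emph{single} index $\sigma$ of the generating family, whereas your coordinatewise version only yields ``$w_n\in u_nS$ cofinally'', which is vacuous (take $u_n=w_n$). The paper's argument runs: choose generators $u_\beta$ ($\beta<\gamma$, $\gamma$ a limit ordinal) with $u_\beta\notin\bigcup_{\tau<\beta}u_\tau S$, write $u_\beta=sx_\beta=ty_\beta$, take a \emph{uniform} ultrafilter $\Phi$ on $\gamma$ (every set in $\Phi$ has cardinality $\gamma$, hence is cofinal), and set $\underline{a}_\Phi=(x_\beta)_\Phi$, $\underline{b}_\Phi=(y_\beta)_\Phi$. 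Then $s\underline{a}_\Phi=t\underline{b}_\Phi$, Condition (W) supplies $u\in sS\cap tS$ and $\underline{c}_\Phi$ with $s\underline{a}_\Phi=t\underline{b}_\Phi=u\underline{c}_\Phi$; writing $u=u_\sigma h$, the $\Phi$-large set on which the coordinatewise equalities hold contains some $\alpha\geq\sigma+1$, whence $u_\alpha=sx_\alpha=uz_\alpha=u_\sigma hz_\alpha\in u_\sigma S$, a contradiction. You flagged this ``bookkeeping'' as the main obstacle but did not resolve it; the two missing ingredients are precisely (a) that the witness $u$ is a fixed element of $S$, and (b) the choice of a uniform ultrafilter (or an equivalent cofinality device) forcing every $\Phi$-large set to meet the tail $\{\beta:\beta>\sigma\}$.
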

\begin{proof}

$(i)$ implies $(ii)$: this follows from \L os's Theorem; 
$(ii)$ implies $(iii) $ is clear and $(iii)$ implies $(iv)$ is obvious as $S$ satisfies Condition $(W)$ as an $S$-act by Remark~\ref{rem:Sw}.

$(iv)$ implies $(v)$: let $ s , t \in S$ and suppose that $ sS \cap tS \not = \emptyset $. Clearly $ sS \cap tS$ is a right ideal of  $S$ and so in particular is a right $S$-act. We suppose that $ sS \cap tS$ is not finitely generated. Let $ \{u_{\beta}: \beta < \gamma \} $ be a generating subset of $ sS \cap  tS$ of cardinality $\gamma$, where $u_{\beta}= s x_{\beta} = t y_{\beta} $ for some $x_{\beta}$,\,$y_{\beta}$ in $S$.

By assumption $\gamma $ is a limit ordinal. We may suppose that for any $ \beta < \gamma $ , $u_{\beta} $ is not in the right ideal generated by the preceding elements $u_\tau$ that is $u_\beta \not \in \bigcup_{\tau < \beta } u_{\tau }S$, for any $\beta < \gamma $.

Let $\Phi$ be a uniform ultrafilter on $\gamma $, that is $\Phi$ is an ultrafilter on $\gamma$ such that all sets in $\Phi$ have same cardinality $\gamma$. Let ${\mathcal U} = S ^{\gamma} / \Phi$. By assumption, ${\mathcal U}$ satisfies Condition (W) as a $S$-act.

Define elements $\underline{a}=(x_{\beta})$ and $\underline{b}=(y_{\beta})$ and consider ${\underline{a}}_{\Phi},\,{\underline{b}}_{\Phi} \in \mathcal{U}$. Since $s x_{\beta}=u_\beta =t y_\beta $ for all $ \beta < \gamma$, clearly $ s {\underline{a}}_{\Phi} = t {\underline{b}}_{\Phi}$. By assumption $ {\mathcal U}$ satisfies Condition (W) so there exists $ {\underline{c}}_{\Phi} \in {\mathcal U} $ and $ u  \in sS \cap tS$ such that $s {\underline{a}}_{\Phi} = t {\underline{b}}_{\Phi} = u\,{\underline{c}}_{\Phi}$. Let ${\underline{c}}_{\Phi}=(z_{\beta})_{\Phi}$ so there exists sets $T_1, T_2 \in \Phi$ such that $s \,x_{\beta} = u\,z_{\beta} $  for all $ \beta \in T_1$ and $t \,y_{\beta}=  u \,z_{\beta}$ for all $  \beta \in T_2$.  Since $u \in sS \cap tS$ there exists $\sigma < \gamma $ and $ h \in S$ with $ u = u_\sigma \, h$.
Using the fact that $ T_1 \cap T_2 \in \Phi$ and $\Phi$ is uniform ultrafilter, $T_1 \cap T_2$ contains an ordinal say $ \alpha \geq \sigma +1 $. Then
$$u_\alpha = s \, x_{\alpha} = t \, y_{\alpha} = u\, z_\alpha =u_{\sigma} \,h \, z_{\alpha}$$ and so \, $u_\alpha \in u_{\sigma} S$, a contradiction.  
Thus $sS \cap tS$ is finitely generated.

$(v)$ implies $(i)$: we show that the class of $S$-acts satisfying Condition (W) is axiomatisable by giving explicitly a set of sentences that axiomatises $\mathcal{W}$.

For any element $\rho=(s,t) $ of $S \times S$ with $ sS \cap tS \not = \emptyset $, we choose and fix a finite set of generators $\{u _{\rho, 1}, \cdots, u_{\rho, n(\rho)}\}$ of $sS \cap tS$. For $s,t \in S$ we define sentences $\Upsilon_{\rho}$ , as follows:

If $sS \cap tS = \emptyset$ then $ \Upsilon_{\rho}$ is 
$$( \forall x)(\forall y)( sx \not = ty);$$
if $sS \cap tS \not = \emptyset $ then $\Upsilon_{\rho}$ is 

$$(\forall x)(\forall y)\bigg(sx = ty \rightarrow (\exists z)( \bigvee_{i=1}^{n(\rho)} sx =ty= u_{\rho,i} \,z )\bigg)$$
Let 
$$\Sigma_{\mathcal{W}} = \Sigma_S \cup \,\{ \Upsilon_{\rho}: \rho \in S \times S \}.$$

\bigskip

\noindent We claim that $ \Sigma_{\mathcal{W}}$ axiomatises  $\mathcal{W}$.

\vspace{2mm}


Suppose that $A$ is a $S$-act satisfying Condition (W) and $\rho \in S \times S$, where $\rho = (s,t)$. If $sS \cap tS = \emptyset $ and there exists $a,b \in A$ such that $sa =tb$, then since $A$ satisfies Condition (W), there exists $u \in sS \cap tS$ (such that $sa = tb = u c $  for some $ c \in A$), a contradiction. Thus $A \models \Upsilon_{\rho}$. 

If $ sS \cap tS \not = \emptyset$ and $ sa = tb $ where $ a, b \in A$ then again using the fact that $A$ satisfies Condition (W) there are elements $ u \in sS \cap tS$ and $ a' \in A$ such that $sa = tb = u a'$. Now $u \in sS \cap tS$ and so $ u = u_{\rho, i} h $ for some $ i \in \{ 1,2, \cdots, n (\rho) \} $ and $ h \in S$. Thus $sa= tb = u_{\rho, i} h a'$, where $ h a' \in A$. Hence $A \models \Upsilon_{\rho} $.

Conversely if $A$ is a model of $\Sigma_{\mathcal{W}}$ and if $sa = tb$ where $ s,t \in S$ and $ a,b \in A$, then since $ A \models \Upsilon_{\rho}$, where $\rho=(s,t)$ it follows that $ sS \cap tS$ cannot be empty and $\Upsilon_{\rho}$ is 

$$(\forall x)(\forall y)\big(sx = ty \rightarrow (\exists z)( \bigvee_{i=1}^{n(\rho)} sx =ty= u_{\rho,\, i} z )\big).$$ 
Hence there exists an element $ c \in A$ such that $ sa = tb = u_{\rho, i} c $ for some $i \in \{ 1,2, \cdots, n(\rho) \}$. By definition of $ u_{\rho,\, i }  $ we have $ u_{\rho,\, i } \in sS \cap tS $. Thus $A$ satisfies Condition (W)  and so $\Sigma_{\mathcal{W}}$ axiomatises $\mathcal{W}$.

\end{proof}

We now explain the axiomatisability of Condition (W) in terms of replacement skeletons.

\begin{rem}\label{rem:length2w} Observe that if $sa = tb=ua'$ for some $s, t,u \in S, a, b,a' \in B$, then

\[
\begin{array}{rclcrcl}
                  &    &                       & a& = & 1 \, a\\
                 s \, 1 & = & 1 \,s       & s  a & = & u a' \\  
                   1\, u     & =  & 1\, u                & ua' &= & tb\\
                   1\, t&=&t\, 1&1\, b&=&b
                                                                                                   
\end{array}\]
so $(s,a),\,(t,b)$ are connected via a tossing of length $3$ over $S$ and $B$ with skeleton
$(1,s,u,u,t,1)$. 

Conversely if  $(s,a)$ and $(t,b)$ are via a tossing with skeleton
$(1,s,u,u,t,1)$, we have
\[
\begin{array}{rclcrcl}
                  &    &                       & a& = & 1 \, b_1\\
                 s \, 1 & = & a_2 s        & s  b_1 & = & u b_2 \\  
                a_2u & =& a_3u    & u b_2 &= & tb_3 \\
                a_3t&=&t\, 1&1\, b_3&=&b,   
\end{array}\]then $sa=tb=ub_2$ for some $b_2 \in B$.

\end{rem}

\begin{cor}\label{cor:axiocondwsacttossing}
The following conditions are equivalent for a monoid $S$:

(i) the class $\mathcal{W}$ is axiomatisable;

(ii) for every skeleton $\mathcal{S}=(1,\,s,\,t,\,1)$  over $S$, there exists finitely many  replacement skeletons $$\mathcal{S}_1=(1,s,u_1,u_1,t,1), \hdots ,\mathcal{S}_{n(\mathcal{S})}=(1,s,u_{n(\mathcal{S})},u_{n(\mathcal{S})},t,1)$$
where $u_i\in sS\cap tS$,
such that for any $a,b \in B \in \mathcal{W}$ and $sa = tb$ (equivalently,  $(s,a)$ connected with $ (t,b)$ via a tossing over $S$ and $B$ with skeleton $\mathcal{S}$), then $(s,a)$ is connected to $(t,b)$ connected 
via a tossing over $S$ and $B$ with skeleton $\mathcal{S}_i$(equivalently,
$sa=tb=u_id$ for some $d\in B$), for some $1\leq i\leq n$. 
\end{cor}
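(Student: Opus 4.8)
The plan is to establish the equivalence of $(i)$ and $(ii)$ in Corollary~\ref{cor:axiocondwsacttossing} by translating between the finite-generation condition on $sS \cap tS$ furnished by Theorem~\ref{thm:condwsact}$(v)$ and the existence of finitely many replacement skeletons of the prescribed form. The translation mechanism is supplied by Remark~\ref{rem:length2w}, which shows that $sa = tb = ua'$ corresponds exactly to a length-$3$ tossing over $S$ and $B$ with skeleton $(1,s,u,u,t,1)$. Thus each generator $u_i$ of the right ideal $sS \cap tS$ gives rise to precisely one candidate replacement skeleton $\mathcal{S}_i = (1,s,u_i,u_i,t,1)$, and the whole argument reduces to matching generators with skeletons.

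First I would prove $(i) \Rightarrow (ii)$. Assume $\mathcal{W}$ is axiomatisable and fix a skeleton $\mathcal{S} = (1,s,t,1)$. By Theorem~\ref{thm:condwsact}, either $sS \cap tS = \emptyset$, in which case I set $n(\mathcal{S}) = 0$ and note that $sa = tb$ can never occur for $a,b \in B \in \mathcal{W}$ (so the condition holds vacuously), or $sS \cap tS$ is finitely generated, say $sS \cap tS = \bigcup_{i=1}^{n} u_i S$. Put $n(\mathcal{S}) = n$ and take $\mathcal{S}_i = (1,s,u_i,u_i,t,1)$. If $a,b \in B \in \mathcal{W}$ with $sa = tb$, then Condition (W) yields $u \in sS \cap tS$ and $a'' \in B$ with $sa = tb = u a''$. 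Writing $u = u_i h$ for some $i$ and $h \in S$ gives $sa = tb = u_i(h a'')$, and by Remark~\ref{rem:length2w} this is exactly a tossing with skeleton $\mathcal{S}_i$, as required.

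For the converse $(ii) \Rightarrow (i)$, I would verify condition $(v)$ of Theorem~\ref{thm:condwsact}. Suppose $sS \cap tS \neq \emptyset$ and pick any $u \in sS \cap tS$, so $su' = u = tv'$ for some $u',v' \in S$; then $s u' = t v' = u\cdot 1$, exhibiting a tossing with skeleton $(1,s,t,1)$ connecting $(s,u')$ to $(t,v')$ over $S$ and $B = S$, which lies in $\mathcal{W}$ by Remark~\ref{rem:Sw}. Applying $(ii)$, there is a replacement tossing with some skeleton $\mathcal{S}_i = (1,s,u_i,u_i,t,1)$, which by Remark~\ref{rem:length2w} means $s u' = t v' = u_i d$ for some $d \in S$; hence $u = u_i d \in u_i S$. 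Since the $u_i$ all lie in $sS \cap tS$ and there are only finitely many of them, $sS \cap tS = \bigcup_{i=1}^{n} u_i S$ is finitely generated, so $\mathcal{W}$ is axiomatisable.

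The main obstacle, and the only point requiring genuine care, is ensuring in the converse direction that the element $u$ obtained from an arbitrary member of $sS \cap tS$ can actually be realised as the first coordinate of the $u a'$ expression in a tossing over the \emph{free} act $S$ itself, and that the $u_i$ appearing in the replacement skeletons do themselves belong to $sS \cap tS$. Both follow once one unwinds the skeleton $\mathcal{S}_i = (1,s,u_i,u_i,t,1)$ via Remark~\ref{rem:length2w}, since the middle equalities $s u' = u_i b_2$ and $u_i b_2 = t v'$ force $u_i b_2 \in sS \cap tS$; but one must be attentive that the generators $u_i$ named in the replacement skeletons are precisely those guaranteed to lie in $sS \cap tS$ by the hypothesis, rather than arbitrary elements, so that the finite generating set produced is genuinely contained in the ideal. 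The remainder is the routine bookkeeping of reading tossings off skeletons, already packaged for us in Remark~\ref{rem:length2w}.
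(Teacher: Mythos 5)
Your proposal is correct and follows essentially the same route as the paper's proof: both directions translate between finite generation of $sS\cap tS$ (condition $(v)$ of Theorem~\ref{thm:condwsact}) and the replacement skeletons $(1,s,u_i,u_i,t,1)$ via Remark~\ref{rem:length2w}, using Condition (W) in $B$ for the forward direction and $S\in\mathcal{W}$ (Remark~\ref{rem:Sw}) for the converse. Your extra observation that the case $sS\cap tS=\emptyset$ holds vacuously is a small clarification the paper leaves implicit, but the argument is otherwise identical.
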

\begin{proof} Suppose that $\mathcal{W}$ is axiomatisable. Let
$\mathcal{S}=(1,s,t,1)$ be a skeleton. If $sS\cap tS=\emptyset$, we put $n=0$.
Otherwise, we know from Theorem~\ref{thm:condwsact} that $sS\cap tS$ is finitely generated, say
by $u_1,\hdots ,u_n$. Let $\mathcal{S}_i=(1,s,u_i,u_i,t,1)$. If $B\in \mathcal{W}$ and
$sa=tb$, then $sa=tb=vc$ for some $v\in sS\cap tS$ and $c\in B$. But then $v=u_ir$ for some
$i\in \{ 1,\hdots ,n\}$, giving $sa=tb=u_id=u_i rc$. Thus $(ii)$ holds by Remark
~\ref{rem:length2w}.

Conversely, suppose that $(ii)$ holds and let $s,t\in S$. Suppose that $sS\cap tS\neq \emptyset$ and
let $r=sa=tb$ where $r,a,b\in S$. Certainly $S$ has Condition (W), so that there is a replacement tossing
$\mathcal{S}_i=(1,s,u_i,u_i,t,1)$ for some $u_i\in sS\cap tS$ and $i\in \{ 1,\hdots ,n\}$. Hence
$r=sa=tb=u_id$ for some $d\in S$ so that $r\in u_iS$ and we deduce $sS\cap tS=\bigcup_{1\leq i\leq n}u_iS$.
By Theorem~\ref{thm:condwsact}, $\mathcal{W}$ is axiomatisable.
\end{proof}
 \begin{rem} We have replaced a smaller tossing with longer one. This is concerned with having a common tossing $(s,a) \to (u_i,d)$ and $(t,b) \to (u_i,d)$.
 \end{rem}

\subsection{Axiomatisability of Condition (PWP)}

 We  note that $\mathbf{R}(t,t)$ is as follows: 
 
 $$\mathbf{R}(t,t) = \{ (u,v ) \in S \times S: tu = tv\}.$$

 \begin{rem} Note that $S$ satisfies Condition $(PWP)$.
 \end{rem}
 

 \begin{thm}\label{thm:PWP} The following conditions are equivalent for a monoid $S$:

 (i) the class $\mathcal{PWP}$ is axiomatisable;

(ii) $\mathcal{PWP}$ is closed under ultraproducts;

(iii) $\mathcal{PWP}$ is closed under ultrapowers;

(iv) every ultrapower of $S$ has $\mathcal{PWP}$;
 
 (v) for any $t \in S,\,\, \mathbf{R}(t,t) = \emptyset$ or $\mathbf{R}(t,t)$ is  finitely generated as an $S$-subact of $S \times S$.
 
 \end{thm}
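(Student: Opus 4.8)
The plan is to prove the cycle $(i)\Rightarrow(ii)\Rightarrow(iii)\Rightarrow(iv)\Rightarrow(v)\Rightarrow(i)$, following the template already used for Theorems~\ref{thm:P}, \ref{thm:E} and especially Theorem~\ref{thm:condwsact}. The implication $(i)\Rightarrow(ii)$ is immediate from \L os's Theorem; $(ii)\Rightarrow(iii)$ holds because ultrapowers are a special case of ultraproducts; and $(iii)\Rightarrow(iv)$ follows because $S$ itself satisfies Condition (PWP) as an $S$-act (by the preceding Remark), so every ultrapower of $S$ lies in $\mathcal{PWP}$. These three steps are routine.

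The substantive direction is $(iv)\Rightarrow(v)$, which I would prove by contraposition, mimicking the uniform-ultrafilter construction in the proof of Theorem~\ref{thm:condwsact}. First observe that $\mathbf{R}(t,t)$ is always an $S$-subact of $S\times S$: if $tu=tv$ then $t(uw)=t(vw)$ for every $w$, so $(u,v)w\in\mathbf{R}(t,t)$. Suppose $\mathbf{R}(t,t)$ is not finitely generated, and choose a generating set $\{(u_\beta,v_\beta):\beta<\gamma\}$ of least cardinality $\gamma$, arranged so that $(u_\beta,v_\beta)\notin\bigcup_{\tau<\beta}(u_\tau,v_\tau)S$ for each $\beta$; then $\gamma$ is a limit ordinal. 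Let $\Phi$ be a uniform ultrafilter on $\gamma$ and form the ultrapower $\mathcal{U}=S^\gamma/\Phi$, which by hypothesis satisfies (PWP). Putting $\underline{a}=(u_\beta)$ and $\underline{b}=(v_\beta)$, we have $t\underline{a}_\Phi=t\underline{b}_\Phi$, so (PWP) yields $\underline{c}_\Phi=(w_\beta)_\Phi$ and $u,v\in S$ with $\underline{a}_\Phi=u\underline{c}_\Phi$, $\underline{b}_\Phi=v\underline{c}_\Phi$ and $tu=tv$, whence $(u,v)\in\mathbf{R}(t,t)$ and so $(u,v)=(u_\sigma,v_\sigma)h$ for some $\sigma<\gamma$ and $h\in S$. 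The sets $\{\beta:u_\beta=uw_\beta\}$ and $\{\beta:v_\beta=vw_\beta\}$ both lie in $\Phi$, hence so does their intersection; by uniformity it contains some $\alpha\geq\sigma+1$, giving $(u_\alpha,v_\alpha)=(u_\sigma,v_\sigma)hw_\alpha\in(u_\sigma,v_\sigma)S$, contradicting the choice of the generators. Thus $\mathbf{R}(t,t)$ is finitely generated.

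Finally, for $(v)\Rightarrow(i)$ I would exhibit an explicit axiomatisation. Note that $\mathbf{R}(t,t)$ always contains $(1,1)$, so the empty alternative in $(v)$ never actually occurs and is listed only for uniformity with the earlier theorems. For each $t\in S$ fix finitely many generators $(u_{t,1},v_{t,1}),\dots,(u_{t,n(t)},v_{t,n(t)})$ of $\mathbf{R}(t,t)$ and let $\Psi_t$ be the sentence
$$(\forall x)(\forall y)\Big(tx=ty\rightarrow(\exists z)\bigvee_{i=1}^{n(t)}(x=u_{t,i}z\wedge y=v_{t,i}z)\Big).$$
Set $\Sigma_{\mathcal{PWP}}=\Sigma_S\cup\{\Psi_t:t\in S\}$. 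One checks directly that a left $S$-act $A$ satisfies Condition (PWP) if and only if $A\models\Sigma_{\mathcal{PWP}}$: if $ta=tb$ then (PWP) produces $(u,v)\in\mathbf{R}(t,t)$ and hence some generator $(u_{t,i},v_{t,i})$ with $(u,v)=(u_{t,i},v_{t,i})h$, so that $a=u_{t,i}(hc)$ and $b=v_{t,i}(hc)$; conversely each generator lies in $\mathbf{R}(t,t)$, so the witness supplied by $\Psi_t$ verifies (PWP). I expect the main obstacle to be the $(iv)\Rightarrow(v)$ step, namely arranging the uniform ultrafilter so that the element of $\mathcal{U}$ provided by (PWP) can be pushed back coordinatewise to contradict the minimality of the generating set; the remaining steps are essentially bookkeeping.
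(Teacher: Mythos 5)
Your proposal is correct, and four of the five implications ($(i)\Rightarrow(ii)\Rightarrow(iii)\Rightarrow(iv)$ and $(v)\Rightarrow(i)$, including the explicit sentences $\Psi_t$ and their verification) match the paper's proof essentially verbatim; your observation that $(1,1)\in\mathbf{R}(t,t)$ makes the empty case vacuous is a small improvement on the paper, whose own treatment of that case is redundant. The one place you genuinely diverge is $(iv)\Rightarrow(v)$. You transport the uniform-ultrafilter argument from the proof of Theorem~\ref{thm:condwsact}: pick an irredundant generating set $\{(u_\beta,v_\beta):\beta<\gamma\}$ of minimal infinite cardinality $\gamma$, take a uniform ultrafilter on $\gamma$, and derive a contradiction from $(u_\alpha,v_\alpha)\in(u_\sigma,v_\sigma)S$ for some $\alpha>\sigma$. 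The paper instead indexes by the set $J$ of finite subsets $f$ of $\mathbf{R}(t,t)$, chooses witnesses $(a_f,a'_f)\in\mathbf{R}(t,t)\setminus fS$, and takes an ultrafilter containing every $J_{(u,v)}=\{f:(u,v)\in f\}$; the contradiction then comes from landing in $T\cap J_{(u,v)}$. Both arguments are sound. Yours requires the preliminary step of extracting a well-ordered generating set with no element in the subact generated by its predecessors (and the cardinality count showing $T_1\cap T_2$ meets $[\sigma+1,\gamma)$), which the paper already carries out for Condition (W), so nothing new is needed; the paper's finite-subsets construction avoids any choice of generating set and any ordinal bookkeeping, negating finite generation directly, at the cost of a slightly less concrete index set. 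Either version should be accepted.
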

 \begin{proof}

 $(i)$ implies $(ii)$: this follows from \L os's Theorem, $(ii)$ implies $(iii) $ is clear;
$(iii)$ implies $(iv)$ is obvious as $S$ satisfies Condition $(PWP)$ as an $S$-act. 
.

$(iv)$ implies $(v)$: suppose $\mathbf{R}(t,t) \not = \emptyset$ and is not finitely generated. Suppose for  each finite subset $f $ of $ \mathbf{R}(t,t)$, there exists $a_{f},a^{'}_{f} \in S$ with  $t a_{f}= t a^{'}_{f}$ and $(a_{f},a^{'}_{f}) \not \in \,f \,S.$


Let $J$ be the set of finite subsets of $\mathbf{R}(t,t)$.  For each $(u,v) \in \mathbf{R}(t,t)$ we define $$J_{(u,v)}= \{f \in J:(u,v) \in f\}.$$

As each intersection of finitely many of the sets $J_{(u,v)}$ is non-empty, so we are able to define an ultrafilter $\Phi$ on $J$,  such that each $J_{(u,v)} \in \Phi$ for all $(u,v) \in \mathbf{R}(t,t).$

Let $\underline{a}=(a_f)$ and $\underline{a}^{'}=(a'_f)$ then  $t \underline{a}= t \underline{a}^{'}$ in $\mathcal{S}$ where $\mathcal{S}= \prod_{f \in J}S^{f}$, where each $S^{f}$ is a copy of $S$, and this equality is determined by a product  of the elements $t a_{f}= t a^{'}_{f}$ with $a_{f}, a^{'}_{f}\in S$, for each $f \in J$. It follows that this equality  $t \underline{a}_{\Phi}= t \underline{a}^{'}_{\Phi}$ also holds in $\mathcal{U}$ where  $\mathcal{U} = \prod_{f \in J} S^{f} /\Phi$; by assumption  $\mathcal{U}$ has $\mathcal{PWP}$,  so there exists $u , v \in S$, and ${\underline{r}}_{\Phi}=(r_{f})_{\Phi} \in \mathcal{U}$ such that  $$\underline{a}_{\Phi} = u \underline{r}_{\Phi}, \,\,\,a^{'}_{\Phi}= v \underline{r}_{\Phi},\,\, tu = tv.$$

As $\Phi$ is closed under finite intersections, there must exists $T \in \Phi$ such that 
 $$a_{f} = u r_{f},\,\,\, a^{'}_{f}= v r_{f}$$ for all $ f \in T$.

Now suppose that  $f \in T \cap J_{(u,v)}$, then $(u,v) \in f $ so $$(a_{f},a^{'}_{f}) = (u,v) r_{f} \, \in  f S$$  a contradiction to our assumption, hence $(iv)$ implies $(v)$.

$(v)$ implies $(i): $ we will show that the class of left $S$-acts satisfying
Condition $(PWP)$ is axiomatisable by giving explicitly a set of sentences that
axiomatises this class. For any element $t \in S $ with $\mathbf{R}(t,t) \not = \emptyset$, we choose and fix a finite set of
elements $\{(u_{t,1},v_{t,1}) \cdots (u_{t,\,n(t)},v_{t,\,n(t)}) \} $ of 
$\mathbf{R}(t,t)$. We
define sentences $\phi_t$ of $L_s$ as follows:

If $\mathbf{R}(t,t) = \emptyset$ for all $ t \in S$ then $\phi_{t}$ will be

\bigskip

\begin{equation*}
(\forall x)(\forall x')(tx \not = tx');
\end{equation*}

otherwise,  $\phi_t$ is 
\begin{equation*}
(\forall x)(\forall x')\big(tx =tx' \rightarrow (\exists z)(\bigvee
^{n(t)}_{i=1}(x=u_{t,\,i}z \wedge x' = v_{t,\, i}z ))\big) .
\end{equation*}

\vspace{2mm}

Let 
\begin{equation*}
\Sigma_{\mathcal{PWP}} = \Sigma_S \cup \{\phi_{t}:t \in S \}
\end{equation*}

We claim that $\Sigma_{\mathcal{PWP}}$ axiomatises the class $\mathcal{PWP}$.

\vspace{2mm}
Let $A$ be an $S$-act satisfies Condition $(PWP)$. If $\mathbf{R}(t,t) = \emptyset$ and $ta = ta^{'}$ for some $a,a' \in A$ then by Condition  ${(PWP)}$  there exists $u,v \in S$ such that $tu = tv$  a contradiction hence $A \models \phi_{t}$.



If  $ta= ta' $ where $a,a'
\in A $ then using the fact that $A$ satisfies Condition $(PWP)$ there are
elements $s^{\prime },t^{\prime }\in S $ and $c \in A $ such that $%
ts^{\prime } = tt^{\prime }$, $a= s^{\prime }\,c ,\,\,a'= t^{\prime }\,c$. Now $%
(s^{\prime },t^{\prime }) \in \mathbf{R}(t,t)$ and $\mathbf{R}(t,t)\not=\emptyset$. Hence $\phi_{t}$ is  

\begin{equation*}
(\forall x)(\forall x')\big(tx =tx' \rightarrow (\exists z)(\bigvee
^{n(t)}_{i=1}(x=u_{t, \, i}z \wedge x' = v_{t,\, i}z ))\big) 
\end{equation*}

\noindent also $(s^{\prime
},t^{\prime })=(u_{t,\, i},v_{t,\, i})s$ for some $i \in \{1,2, \hdots %
,n(t)\}$ and $s \in S$. Thus  $a= u_{t,\, i}sc,\, a'= v_{t,\, i} sc $. Hence $A \models \phi_{t}$.

\vspace{2mm}

Conversely, let $A$ be a model of $\Sigma_{\mathcal{PWP}}$. If $t \,a = t\,a'$ where $t \in S$ and $a, a' \in
A$,  we cannot have that  $\phi_{t}$ is  $(\forall x)(\forall x') (tx \not = tx')$. It follows that $\mathbf{R}(t,t) \not = \emptyset$ and  $$ f =\{(u_{t,1}, v_{t,1}),\cdots, (u_{t,\,n(t)},v_{t,\,n(t)})\}$$ exists as in $(v)$, and $\phi_t$ is 
\begin{equation*}
(\forall x)(\forall x')\big(tx \,= \,tx' \,\rightarrow \,(\exists \,z) (
\bigvee ^{n(t)}_{i=1}(\,x\,=\,u_{t,\, i}z \wedge x'=  \,v_{t,\, i}
\,z))\big).
\end{equation*}

\vspace{2mm}

Hence there exists an element $c \in A$ with $a=u_{t,\, i} \, c = v_{t,\,
i} \, c$ for some $i \in \{1,2, \hdots, n(t) \} $.  By definition of $%
u_{t,\, i}, v_{t,\, i}$ we have $s\,u_{t,\, i}\, = t v_{t,\, i } $. 
Thus $A$ satisfies Condition $(PWP)$ and so $\Sigma_{\mathcal{PWP}}$ axiomatises the class $\mathcal{PWP}$.
\end{proof}

\begin {cor}\label{cor:axiocondPWPsacts}

The following conditions are equivalent for a monoid $S$:

(i) the class $\mathcal{PWP}$ is axiomatisable;

(ii) for every skeleton $\mathcal{S}=(1,\,t,\,t,\,1)$  over $S$, there exists finitely many replacement skeletons $\mathcal{S}_{1}=(u_1,v_1), \cdots, \mathcal{S}_{q({\mathcal{S}})}=(u_{q(\mathcal{S})},
v_{q(\mathcal{S})})$ of length one such that for any $a,b \in B \in \mathcal{PWP}$ and $ta = tb$ (equivalently,  $(t,a)$ is connected to $ (t,b)$ via a tossing with skeleton $\mathcal{S}$), then $(t,a)$ is connected to $(t,b)$ via a replacement tossing  
with skeleton $\mathcal{S}_{i}$, for some $1 \leq i \leq q(\mathcal{S}).$ 

\end{cor}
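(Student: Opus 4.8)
The plan is to prove the equivalence of $(i)$ and $(ii)$ by following exactly the template established in Corollaries~\ref{cor:axiocondpsacts} and~\ref{thm:axiocondEsact}, using Theorem~\ref{thm:PWP} as the bridge. The key observation, made in Remark~\ref{rem:length1ep} and Remark~\ref{rem:length2}, is that the equality $ta=tb$ is precisely equivalent to $(t,a)$ being connected to $(t,b)$ via a length-$2$ tossing with skeleton $\mathcal{S}=(1,t,t,1)$, and that a length-$1$ replacement tossing with skeleton $(u_i,v_i)$ connecting $(t,a)$ to $(t,b)$ over $S$ and $B$ amounts exactly to the data $a=u_ic$, $b=v_ic$ for some $c\in B$ with $tu_i=tv_i$, that is, $(u_i,v_i)\in\mathbf{R}(t,t)$.

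First I would assume $(i)$. By Theorem~\ref{thm:PWP}, for each $t\in S$ the set $\mathbf{R}(t,t)$ is empty or finitely generated as an $S$-subact of $S\times S$. Fixing a skeleton $\mathcal{S}=(1,t,t,1)$, in the first case I set $q(\mathcal{S})=0$; in the second I write $\mathbf{R}(t,t)=\bigcup_{i=1}^{q}(u_i,v_i)S$ and put $\mathcal{S}_i=(u_i,v_i)$. If $B\in\mathcal{PWP}$ and $ta=tb$, Condition (PWP) yields $c\in B$ and $u,v\in S$ with $a=uc$, $b=vc$, $tu=tv$; hence $(u,v)\in\mathbf{R}(t,t)$, so $(u,v)=(u_i,v_i)r$ for some $i$ and $r\in S$, giving $a=u_i(rc)$, $b=v_i(rc)$. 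This is exactly a replacement tossing with skeleton $\mathcal{S}_i$, so $(ii)$ holds.

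Conversely, assuming $(ii)$, I would fix $t$ with $\mathbf{R}(t,t)\neq\emptyset$ and take any $(u,v)\in\mathbf{R}(t,t)$, so $tu=tv$. Since $S$ itself satisfies Condition (PWP) (noted in the remark preceding Theorem~\ref{thm:PWP}), applying $(ii)$ with $B=S$, $a=u$, $b=v$ produces a replacement tossing with some skeleton $\mathcal{S}_i=(u_i,v_i)$, forcing $(u_i,v_i)\in\mathbf{R}(t,t)$ and $u=u_ic$, $v=v_ic$ for some $c\in S$. Thus $(u,v)=(u_i,v_i)c$, showing $\mathbf{R}(t,t)$ is generated by the finitely many $(u_i,v_i)$; by Theorem~\ref{thm:PWP}, $\mathcal{PWP}$ is axiomatisable, so $(i)$ holds.

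I do not anticipate a serious obstacle here, since the argument is a direct transcription of the two earlier corollaries with $\mathbf{R}(t,t)$ playing the role that $\mathbf{R}(s,t)$ and $\mathbf{r}(s,t)$ played before; indeed the stated proof simply refers back to those. The only point requiring a little care is the bookkeeping translating between the ``length-$1$ replacement tossing'' language of the skeletons and the raw equalities $a=u_ic,\,b=v_ic$ with $tu_i=tv_i$, but this is handled entirely by Remark~\ref{rem:length1ep}. It is worth emphasising that the single-variable nature of $\mathbf{R}(t,t)$ (we vary only $t$, not an independent pair $s,t$) is what makes the skeleton take the diagonal form $(1,t,t,1)$, matching condition $(v)$ of Theorem~\ref{thm:PWP}.
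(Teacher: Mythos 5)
Your argument is correct and is precisely the paper's own proof, which simply instructs the reader to repeat the argument of Corollary~\ref{cor:axiocondpsacts} with $s=t$ and $\mathbf{R}(t,t)$ in place of $\mathbf{R}(s,t)$, using Theorem~\ref{thm:PWP} as the bridge. The only quibble is bibliographic: the length-one tossing translation you need is Remark~\ref{rem:length1} specialised to $s=t$ (two elements $a=u_ic$, $b=v_ic$), not Remark~\ref{rem:length1ep}, which treats the case where both coordinates carry the same element $a$.
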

\begin{proof} We follow the same argument given in Theorem~\ref{cor:axiocondpsacts}, putting $s=t$ in Remarks ~\ref{rem:length2} and  ~\ref{rem:length1} and using $\mathbf{R}(t,t)$ rather than $\mathbf{R}(s,t)$.
\end{proof}

\section{Examples}\label{sec:examples}

{\bf Example\,(1)}:
Let $G$ be a group with identity $\epsilon$, let $S_1= G$. For the monoid $S_1$ we show the classes $\mathcal{E},\,\mathcal{P},\,\mathcal{EP},\mathcal{W}$ and $\mathcal{PWP}$ of  $S$-acts are axiomatisable.

First note that for any $s,t \in G, \mathbf{r}(s,t)$ is a right ideal, so that $\mathbf{r}(s,t)=G$ and so is finitely generated. Thus $\mathcal{E}$ is axiomatisable. Also $\mathbf{R}(s,t)=(s^{-1}t,\epsilon)G$ is finitely generated, so $\mathcal{P}$ is axiomatisable by Theorem ~\ref{thm:P} (See also \cite{gould}), and by Remark~\ref{rem:EP}, $\mathcal{EP}$ is also axiomatisable. Since every right ideal of $G$ is simply $G$, by Theorem~\ref{thm:condwsact}, $\mathcal{W}$ is axiomatisable.

 Note that $S_1$ being inverse semigroup is absolutely flat by \cite{VF},  $\mathcal{F}$ is axiomatisable and  so is  $\mathcal{WF}.$

\vspace{2mm}

{\bf Example\,(2)}: Let $T$ be an infinite null semigroup. Consider $S_2=T \cup \epsilon$, where $\epsilon$ is an adjoined identity. For  $s \not = t$,  $\mathbf{r}(s,t)= T$ but $T$ is a non-finitely generated (right) ideal of $S_2$.

Moreover $\mathbf{R}(s,t)$ is not always finitely generated. For $s \not = t \not = \epsilon$, $$\mathbf{R}(s,t)= \{(u,v): u,v \not = \epsilon\}.$$

 Suppose on the contrary, $\mathbf{R}(s,t)$ is finitely generated and let  $$\{(u_1,v_1),\cdots, (u_n,v_n)\}$$ be a finite set of generators. 
 
 For any $m \in T$ we have $(m,m) \in \mathbf{R}(s,t)$, so $(m,m)= (u_i,v_i)p$ for some $u_i,v_i \in T$ and $ p \in S_{2}$. If $p= \epsilon$, then $m=u_i=v_i$, if $p \not = \epsilon$, then $m=0$. It follows that $T$ is finite, a contradiction.
 We therefore have $\mathcal{P}$ is not axiomatisable by ~\ref{thm:P}.

 Note that $\mathbf{R}(\epsilon,\epsilon)=(\epsilon,\epsilon)S_2$. Also $\mathbf{R}(s,s)=(\epsilon,\epsilon)\cup(T \times T)$, a similar kind of arguments as given above for $\mathbf{R}(s,t)$ shows that $\mathbf{R}(s,s)$ is not finitely generated. We therefore conclude that $\mathcal{PWP}$ is not axiomatisable.

 Also note that $\mathcal{WF}$ and hence $\mathcal{F}$ are not axiomatisable, see Example $2$ of \cite{bulmanfleminggould} for detail.

{\bf Example\,(3)}: Let $S_{3}$ be a monoid which is a semillatice $\{0,1\}$ of groups $G_{1},\,G_{0}$ with trivial connecting homomorphism. Let $e,\epsilon$ be the identities of $G_{1}$ and $G_{0}$ respectively. If $G_{1}$ is finite then for monoid $S_{3}$ classes $\mathcal{P},\,\mathcal{E},\,\mathcal{W}$ and $\mathcal{PWP}$ are axiomatisable.

\vspace{2mm}

We are supposing that $S_{3}$ is the union of  groups $G_{0}$ and $G_{1}$. Since each (right)
ideal is a union of $G_{0}$ and $G_{1}$, it follows
that $S_{3}$ has only finitely many  ideals. Then every 
ideal of $S_{3}$ is finitely generated, so 
$ \mathbf{r}(s,t)$ is finitely generated.

We will now check that $ \mathbf{R}(s,t)$ is finitely generated for all
$s,t \in S$. Let $ s,t \in G_{0} $. We claim that $\mathbf{R}(s,t)=\mathbf{R}$ where $\mathbf{R}= (e,t^{-1}s)S \cup  (s^{-1}t,e)S\cup\bigcup_{u,v\in  G_{1}\atop su=tv}(u,v)S. $

First note that $ s e = s = \epsilon s = t t^{-1}s $  hence $(e ,t^{-1}s) \in
\mathbf{R}(s,t)$ and so $(e, t^{-1}s)S \subseteq  \mathbf{R}(s,t) $.
With the dual we have $(e,t^{-1}s)S \cup (s^{-1}t,e)S \subseteq \mathbf{R}(s,t) $
then clearly $\mathbf{R}\subseteq  \mathbf{R}(s,t)$.

Conversely, suppose that $ (u,v) \in \mathbf{R}(s,t) $, so that $ su=tv
$. If $u,v\in  G_{1}$, then clearly $(u,v)\in
\mathbf{R}$. If $u\in G_{0}$ then we have that $ u=\epsilon u=s^{-1}su=s^{-1}tv $
so that $(u,v)=(s^{-1}t,e)v$ where $v\in G_{1}$ and
so $(u,v)\in \mathbf{R}$. Together with the dual this tells that
$\mathbf{R}(s,t)\subseteq \mathbf{R}$ and so $\mathbf{R}(s,t)=\mathbf{R}$
as required.
 If $ s \in G_{0} , t \in G_{1}$ we claim
that $ \mathbf{R}(s,t)=\mathbf{R}$ where $ \mathbf{R}=  (s^{-1}t,\epsilon)S \cup(e,t^{-1}s)S \cup \bigcup_{u,v \in  G_{1}\atop su=tv}(u,v)S.$

To see this, notice that $s(s^{-1}t)=(ss^{-1})t=\epsilon t=t\epsilon$  so that $(s^{-1}t,\epsilon)\in \mathbf{R}(s,t)$. Also 
$se=s=es=tt^{-1}s,$ so that $(e,t^{-1}s)\in \mathbf{R}(s,t)$. 
Consequently, $R \subseteq \mathbf{R}(s,t)$.

Conversely, suppose that $(u,v)\in \mathbf{R}(s,t)$. If $u,v\in
 G_{1}$, then clearly $(u,v)\in \mathbf{R}$. If $u\in
G_{0}$ and $v\in G_{1}$, then from $su=tv$ we have that
$\beta=\alpha$ a contradiction.
 On the other hand, if
$u\in G_{1}$ and
$v\in G_{0}$, then $t^{-1}su=t^{-1}tv=ev=v$
so that $(u,v)=(e,t^{-1}s)u\in \mathbf{R}$. Together with the dual this yields that 
$\mathbf{R}(s,t)\subseteq \mathbf{R}$ and so $\mathbf{R}(s,t)=\mathbf{R}$
as required.

Let $s,t \in G_{\beta}$ we claim
that $ \mathbf{R}(s,t)=\mathbf{R}$ where
$ \mathbf{R}=  (\epsilon,\epsilon)S \cup(s^{-1}t,e)S .$

Suppose that $(u,v)\in \mathbf{R}(s,t)$. If $u,v \in G_{0}$ then $su= tv$ implies $u=v$, so that $(u,v)=(\epsilon,\epsilon)u$. The cases where $u \in G_{0},\, v \in G_{1}$ or $u \in G_{1}, v \in G_{0}$ are not possible. Let $u,v \in G_{\beta}$ with $su = tv$ then $u = s^{-1}tv$ so that $(u,v)=(s^{-1}t,e)v$ where $(s^{-1}t,e) \in \mathbf{R}$. Thus $\mathbf{R}(s,t)$ is finitely generated as required.

As we note from Theorems ~\ref{thm:condwsact} and  ~\ref{thm:PWP}, the classes $\mathcal{W}$ and $\mathcal{PWP}$ are axiomatisable if and only if  every ultrapower of $S$ lies in $\mathcal{W}$ and  $\mathcal{PWP}$ respectively. Also  $\mathcal{P}$ implies $\mathcal{W}$, and $\mathcal{P}$ implies $ \mathcal{PWP}$, so by using Lemma ~\ref{lem:UV} we conclude that $\mathcal{W}$ and $\mathcal{PWP}$ are axiomatisable.

\vspace{2mm}

{\bf Example\,(4)}: Consider the monoid $S_{4}= (\mathbb{Z} \times \mathbb{Z}) \cup (\epsilon, \epsilon)$ where the $(\epsilon, \epsilon)$ is the adjoined identity element with binary operation given by $$(a,b)(c,d)= (a-b +max \{b,c\}, d-c + max\{b,c\})$$ of $S_{3}$. Note that $\mathbf{r}\big((a,b),(c,d)\big)$ which is a right ideal of $S_{4}$ for any $(a,b),(c,d)  \in  S_{4}$ is finitely generated. Hence $\mathcal{E}$ is axiomatisable but $\mathbf{R}\big((a,b),(c,d)\big)$ is not finitely generated, so that the class $\mathcal{P}$ is not axiomatisable. Moreover $S_4$ is inverse semigroup, and hence absolutely flat, so that $\mathcal{F}$ and $\mathcal{WF}$ are axiomatisable. For details we refer reader to  \cite{gould:tartu}.

\vspace{2mm}

 As we know that $\mathcal{P} $ implies $ \mathcal{W}$, and by using Lemma~\ref{lem:UV} if $\mathcal{P}$ is axiomatisable for a monoid $S$ then so is $\mathcal{W}$. But the converse of this statement is not true, note that $S_{4}$ satisfies Condition $(iv)$ of Theorem~\ref{thm:condwsact} as each ideal of $S_{4}$ is principal, see \cite{gould:tartu} and intersection of two ideals is again an ideal, so  is principal.  Thus $\mathcal{W}$ is axiomatisable.

 {\bf Example\,(5)}: Consider $S_{5}= (\mathbb{N},min) \cup \epsilon$, we show that for $S_5$, $\mathbf{R}(s,t)$ is not finitely generated, but $\mathbf{r}(s,t)$ is finitely generated.

  Let $s <t$ then $ \mathbf{r}(s,t)=\mathbf{r}(t,s)=\{1,2,\cdots, s\}= sS $. Hence $\mathcal{E}$ is axiomatisable.

Let $s \not = t$ with $sa= ta$ for some $ a \in {A} \in \mathcal{EP}$, $a= ua''=va''$ for some $a'' \in A$ and $su=tv.$

 If $s \leq t$ with $sa = ta$ we could have $a=sa=sa$ where $ss= ts$ therefore we could take $\{s,s\} \in f$.

Or   $u >s, \,\, us = s$ \,\,so for \,\,$tv= s$ need $v=s$ so that we have $\{(t,s ):t >s\} \in f$.

$$f=\{(1,1),\cdots,(s,s),(s+1,s),(s+2,s),\cdots ,(t,s)\}= (s,s)S \cup (\epsilon,s)S $$ a finite subset of $\mathbf{R}(s,t)$. Thus condition $(iv)$ of Theorem~\ref{thm:(EP)} to axiomatise the class $\mathcal{EP}$ holds.

\bigskip

Note that for $s \leq t,\, \mathbf{R}(s,t)=\{(1,1),\cdots,(s,,s),(s+1,s),\cdots \}= (s,s)S \cup (\epsilon,s)S$ so that  $\mathbf{R}(s,t)$ is finitely generated if $s \not = t$.

\bigskip

We can check that $\mathbf{R}(s,s)$ is not finitely generated.

\bigskip

Suppose on contrary, $\mathbf{R}(1,1)$ is finitely generated.
If $(u_1,v_1),\cdots,(u_n,v_n)$  is a finite set of generators of $\mathbf{R}(1,1)$, let $(\epsilon,m)=(u_{i},v_{i})t$ for some $i$ therefore $u_i= \epsilon,t=\epsilon$ therefore $m=v_{i}$. Hence $\mathcal{P}$ and $\mathcal{PWP}$ are not axiomatisable.

\begin{rem} We make the follwoing connections between the axiomatisability conditions of the following classes of $S$-acts, some of them are still unknown.

\[\begin{array}{rclcrcl}
  
                 \mathcal{P} & \Rightarrow& \mathcal{EP}       &  & Remark~\ref{rem:EP}\\ 
               \mathcal{P} & \not\Leftarrow & \mathcal{EP}    &  & Example\, {5} &  \\
               \mathcal{E}& \Rightarrow  &   \mathcal{EP}                      && Unknown & \\                    
     					\mathcal{E} & \not\Leftarrow & \mathcal{EP}    &   & Unknown&   \\
              \mathcal{P} & \Rightarrow& \mathcal{W}         & & Lemma ~\ref{lem:UV}\\
              \mathcal{P} & \not\Leftarrow&  \mathcal{W}          &  &  Example\,{4}& \\
              \mathcal{P} & \Rightarrow&  \mathcal{PWP}          &  & Lemma ~\ref{lem:UV}\\
              \mathcal{P} & \not\Leftarrow&  \mathcal{PWP}          & & Unknown & \\
              \mathcal{F} & \Rightarrow&  \mathcal{WF}          &  & Lemma ~\ref{lem:UV}\\
               \mathcal{F} & \not\Leftarrow&  \mathcal{WF}          &  & Unknown& \\
\end {array} \]

\end{rem}

\section{Some Open Problems}

\bigskip

\subsection{Axiomatisability of Condition (WP)}
We first describe the Condition $(WP)$;

\vspace{2mm}

A left $S$-act satisfies Condition $(WP)$ if for every pullback diagram $(P,(p_1,p_2))$ 
of the pair $(f,f)$ where each $f :I \to S$ is a $S$-homomorphism, the corresponding map $\gamma$ is surjective, for some right ideal $I$  of $S$ 

or  equivalently

A left $S$-act $A$ satisfies condition $(WP)$ if and only if for all $S$-homomorphism $f:(sS \cup tS)_{S} \to S_{S}$ where $s,t \in S$ and all $a,a' \in A$  if $(s)f a = (t)f a'$  then there exists $a'' \in A$, $u,v \in S$, $s' , t' \in \{ s,t \}$ such that $(s' u)f=(t'v)f$, $s \otimes a = s' u \otimes a'' $, and $t \otimes a' = t'v \otimes a''$ in $(sS \cup tS)_{S} \otimes _{S}A$.

We aim to axiomatise the class of left $S$-acts satisfying Condition $(WP)$.

It is still an open problem to determine conditions on a monoid $S$ such that classes $\mathcal{WF},\,\mathcal{F}$ are coincide,  also to characterise those conditions on a monoid $S$ such that $\mathcal{WF}$ is axiomatisable  and $\mathcal{F}$ not, or $\mathcal{PWF}$ is axiomatisable but $\mathcal{WF}$  not.

\end{document}